\documentclass[12pt]{amsart}

\usepackage[margin=1.3in]{geometry}
\usepackage{amssymb,amsmath}
\usepackage{amsfonts}
\usepackage{amsthm}
\usepackage{xcolor}
\usepackage{hyperref}
\usepackage{mathrsfs}
\usepackage{indentfirst}
\usepackage{enumitem}

\newtheorem{thm}{Theorem}[section]
\newtheorem{lem}[thm]{Lemma}
\newtheorem{prop}[thm]{Proposition}

\newtheorem{rem}[thm]{Remark}

\numberwithin{equation}{section}

\def\XXint#1#2#3{{\setbox0=\hbox{$#1{#2#3}{\int}$}
		\vcenter{\hbox{$#2#3$}}\kern-.5\wd0}}

\newcommand{\R}{\mathbb{R}}

\newcommand{\osc}{\operatorname*{osc}}

\begin{document}
		\title[Optimal Asymptotics for Supercritical LMC Equations]
		{\textbf{Optimal Asymptotic Behavior at Infinity for Solutions of the Supercritical Lagrangian Mean Curvature Equation in Exterior Domains}}

\author[J.~Bao]{Jiguang Bao}
\thanks{The first author is supported by the National Natural Science Foundation of China (12371200) and Beijing Natural
	Science Foundation (1254049).}
\author[Q.~Jiang]{Qinfeng Jiang*}
\thanks{*Corresponding author.}

\subjclass[2020]{Primary 35J60; Secondary 35B40, 35J96, 35C20}
\keywords{Lagrangian mean curvature equation, supercritical phase, exterior domains, optimal asymptotic expansion, Lipschitz perturbation}

\maketitle

\begin{center}
\normalsize
School of Mathematical Sciences, Beijing Normal University,\\
Beijing, 100875, China\\[0.3em]
\texttt{jgbao@bnu.edu.cn}\\
\texttt{202531130031@mail.bnu.edu.cn}
\end{center}

\begin{center}
\begin{minipage}{0.88\textwidth}
\small
\noindent\textbf{Abstract.}
We study the asymptotic behavior at infinity of solutions to the supercritical
Lagrangian mean curvature equation
\[
\sum_{i=1}^n \arctan \lambda_i(D^2u)=\theta+f(x)
\]
on exterior domains in \(\mathbb R^n\), \(n\ge 2\), where
\(|\theta|>((n-2)\pi)/2\). The perturbation \(f\) is
assumed to be locally Lipschitz near infinity and to satisfy a decay condition with rate \(\beta>0\). The main new ingredient is a
scale-dependent difference quotient argument, combined with a nonlocal
potential method, which avoids differentiating \(f\) twice and yields
quantitative Hessian convergence under only Lipschitz regularity. We establish
optimal asymptotic expansions in all dimensions and for all decay rates
\(\beta>0\), including the critical logarithmic cases. This improves previous
results requiring higher regularity of \(f\) and faster decay in \cite{BJ2026}.
\end{minipage}
\end{center}

\section{Introduction}\label{sec1}
In this paper, we study the asymptotic behavior at infinity of solutions to the
Lagrangian mean curvature equation
\begin{equation}\label{eq}
	F(D^2u):=\sum_{i=1}^n \arctan \lambda_i(D^2u)=\theta+f(x)
	\qquad \text{in } \mathbb R^n.
\end{equation}
Here $\lambda_i(D^2u)$, $i=1,2,\cdots,n$, denote the eigenvalues of the Hessian $D^2u$, 
\(|\theta|>((n-2)\pi)/2\), and $f$ is a perturbation term.

Within the framework of calibrated geometry, Harvey and Lawson \cite{HL1982} first introduced the special Lagrangian equation 
\begin{equation}\label{eq1}
	F(D^2u) := \sum_{i=1}^{n} \arctan \lambda_i(D^2u) = \theta, 
\end{equation}
where $\theta \in \left(-n\pi/2,n\pi/2\right)$ is a constant. 
The left-hand side of equation~\eqref{eq1} represents the argument of the complex number $(1+\sqrt{-1}\lambda_{1}(D^{2}u))\cdots(1+\sqrt{-1}\lambda_{n}(D^{2}u))$, which is usually called the Lagrangian phase. Its solutions $u$ were shown to have the property that the gradient graph $(x, Du(x))$
in Euclidean space is a Lagrangian submanifold that is absolutely volume-minimizing. 

The arctangent operator
is clearly elliptic for any function $u$. Because $\arctan$ is an odd function, we may assume without loss of generality that $\theta \geq 0$. In the literature \cite{Yuan2006}, the Lagrangian phase $(n - 2)\pi/2$ is usually called critical because the level set
\[L_\theta:=\left\{\lambda=\left(\lambda_1,\cdots,\lambda_n\right)\in\mathbb{R}^n\vert\sum_{i=1}^n\arctan\lambda_i=\theta\right\}\]
is convex only if $\theta\geq (n - 2)\pi/2$. For $\theta \ge (n-1)\pi/2$, convexity of $L_{\theta}$ is immediate, because $\lambda_i \geq 0$ for all $i$ and the operator $F$ becomes concave. For supercritical phases $\theta > (n-2)\pi/2 $, the operator $F$ can be extended to a concave operator \cite{CPW2017,CW2019}. This structural property plays a fundamental role in the analysis of solutions, particularly in establishing regularity results.

There are several rigidity theorems for the special Lagrangian equation~\eqref{eq1} on the whole space. Jost--Xin \cite[Theorem~1]{JX2002} proved that any entire smooth convex solution with bounded Hessian must be a quadratic polynomial; see also \cite[Theorem~7.3.4]{Xin2003}. Yuan \cite{Y2002,Yuan2006} proved Bernstein-type theorems for entire smooth solutions under the supercritical phase condition and, separately, under semiconvexity assumptions. Related Liouville-type results under additional structural conditions were obtained in \cite{WY2008,D2023}. Chen--Warren--Yuan's interior regularity theory \cite{CWY2009} showed that convex viscosity solutions are smooth when $\theta\geq (n-1)\pi/2$, and a subsequent result \cite{CSY2023} established real analyticity for all convex viscosity solutions. The strict supercritical phase condition in the Bernstein theorem cannot in general be weakened to the critical phase. Indeed, Warren \cite{W2010} constructed the non-quadratic entire solution
\[
u(x)=\left(x_1^2+x_2^2-1\right)e^{-x_3}+\frac14 e^{x_3}
\]
to \eqref{eq1} with $\theta=\pi/2$ in $\mathbb R^3$.

Li--Li--Yuan \cite{LLY2020} studied solutions to \eqref{eq1} in exterior domains and established asymptotic expansions for solutions near infinity. They showed that any smooth solution $u$ of \eqref{eq1} with supercritical phase or under a semiconvexity condition must asymptotically approach a quadratic polynomial (with an additional logarithmic term when $n=2$). Specifically, for dimensions $n\geq3$, they proved the existence of a symmetric
\(n\times n\) matrix \(A\), a vector \(b\in\mathbb R^n\), and a constant \(c\in\mathbb R\) such that
\begin{equation*}
	u(x)= \frac{1}{2}x^\mathsf{T} A x + b^\mathsf{T} x + c+O_k(|x|^{2-n})\quad \text{as}~|x|\to +\infty.
\end{equation*}
For $n=2$, they proved the existence of a symmetric \(2\times2\) matrix \(A\), a vector $b \in \mathbb{R}^2$, and constants $c,d \in \mathbb{R}$ such that
\begin{equation}\label{fd5}
	u(x)= \frac{1}{2}x^\mathsf{T} A x + b^\mathsf{T} x +\frac{d}{2}\ln\left(x^{\mathsf{T}}(I+A^2)x\right) + c+O_k(|x|^{-1})\quad \text{as}~|x|\to +\infty,
\end{equation}
for all $k\in \mathbb{N}$. The notation $\phi(x)=O_k\left(|x|^{k_1}(\ln |x|)^{k_2}\right)$, $k_1\leq 0, k_2\geq 0$, means that for $t=0,\cdots,k$,
\[|D^t\phi(x)|=O(|x|^{k_1-t}(\ln |x|)^{k_2}),\quad \text{as}~|x|\to+\infty.\]

Subsequent works by Liu--Bao \cite{LB2022JGA,LB2023} derived higher-order asymptotic expansions. Recently, Han--Marchenko \cite{HM2025} also used a single function $v$ to characterize remainders in the asymptotic expansions via a modified Kelvin transform. When the constant right-hand side \(\theta\) in \eqref{eq1} is replaced by a
function \(\theta+f(x)\) with \(f(x)\to0\) at infinity, Liu--Bao \cite{LB2022} established the asymptotic expansion at infinity in dimension two under the assumption of quadratic growth.

Recall that, without loss of generality, we assume $\theta\geq 0$. Let
\[\mathcal{A}:=\{M \in \mathcal{S}^{n\times n}\mid \sum_{i=1}^{n} \arctan\lambda_i(M)=\theta\},\]
where $\mathcal{S}^{n\times n}$ denotes the linear space of symmetric $n\times n$ real matrices. Throughout the paper, we assume $\Omega\subset\mathbb R^n$ is a bounded open set.

In \cite{BJ2026}, the authors studied convergence rates and required \(f\) to be at least \(C^3\), since they employed the iteration method developed in \cite{BLZ2015}. They obtained the following result.
\begin{thm}[{\cite[Theorems 1.1 and 1.2]{BJ2026}}]\label{D2ueo}
	Let $n=2$, $\theta>0$, and let $u$ be a smooth solution of~\eqref{eq}. Assume that $f\in C^{m}(\mathbb{R}^2\setminus \overline{\Omega})$, $m\geq 3$. If there exists a positive constant $\beta > 2$ such that
	\begin{equation}\label{Ducondo}
	\limsup_{|x|\to\infty} \left(|x|+|Du(x)|\right)^{\beta+l}\bigl| D^{l} f(x) \bigr|<\infty,	
	\end{equation}
	for $l = 0,1,2,3$, then there exist $A \in \mathcal{A}$, $b \in \mathbb{R}^2$, and $c, d \in \mathbb{R}$ such that
	\begin{equation}\label{bg2o}
		u(x)=\frac{1}{2}x^\mathsf{T} Ax+b^\mathsf{T} x+\frac{d}{2}\ln\left(x^{\mathsf{T}}(I+A^2)x\right)+c+O_k\left(|x|^{-\min \left\{1,\beta-2 \right\} } (\ln |x|)^{\mu_1}\right)
	\end{equation}
	as $|x|\to+\infty$, for all $k=0,\cdots,m+1$. Here $\mu_1 = 
	\begin{cases} 
		0, & \beta \neq 3, \\ 
		1, & \beta = 3.
	\end{cases}$
	$d$ in \eqref{bg2o} is given by
	\begin{equation}\label{d}
		d=\frac{\sqrt{\det(I+A^2)}}{2\pi}
		\cdot\int_{\mathbb{R}^2}\left(\left(I+A^2\right)_{ij}D_{ij}u(x)-\operatorname{tr} A\right)\mathrm{d}x.
	\end{equation}
	
	If $n\geq 3$, $\theta>(n-2)\pi/2$, and $u$ is a smooth solution of~\eqref{eq}. Assume that $f\in C^{m}(\mathbb{R}^n\setminus \overline{\Omega})$, $m\geq 2$. If there exists a positive constant $\beta > 2$ such that \eqref{Ducondo} holds for $l=0,1,2$, then there exist $A \in \mathcal{A}$, $b \in \mathbb{R}^n$, and $c\in \mathbb{R}$ such that
	\begin{equation}\label{bg3o}
		u(x)=\frac{1}{2}x^\mathsf{T} Ax+b^\mathsf{T} x+c+O_k\left(|x|^{2-\min \left\{\beta,n \right\} } (\ln |x|)^{\mu_2}\right)
	\end{equation}
	as $|x|\to+\infty$, for all $k=0,\cdots,m+1$. Here $\mu_2 = 
	\begin{cases} 
		0, & \beta \neq n, \\ 
		1, & \beta = n.
	\end{cases}$
\end{thm}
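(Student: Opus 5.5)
This result is quoted from \cite{BJ2026}, so I would reconstruct the argument along the lines of that paper. The approach is the standard three-stage scheme for exterior problems: (i) show that $D^2u$ is bounded and converges to some $A\in\mathcal A$ at infinity; (ii) write the equation as a linear equation with constant coefficients plus a decaying source; (iii) apply potential/Kelvin-type estimates for that linear equation to extract the expansion.

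\textbf{Step 1: Hessian bounds and blow-down.}
\begin{itemize}
\item Consider $u_R(y)=R^{-2}u(Ry)$ on annuli. The supercritical condition $\theta>(n-2)\pi/2$ makes $F$ extendable to a concave operator \cite{CPW2017,CW2019}, and for $n=2$, $\theta>0$ plays the same role.
\item With $f\to0$ and $|Df|$ decaying by \eqref{Ducondo}, one derives gradient and Hessian bounds. Evans--Krylov and Schauder estimates for the rescaled concave equation then give uniform $C^{2,\alpha}$ bounds.
\item A Liouville theorem for the supercritical special Lagrangian equation on $\R^n\setminus\{0\}$ (Yuan, Li--Li--Yuan \cite{LLY2020}) identifies the blow-down limit as $\tfrac12x^\mathsf{T}Ax$ with $A\in\mathcal A$.
\item Hence $D^2u\to A$, and $|Du|\le C(1+|x|)$. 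This lets us replace $|x|+|Du|$ by $|x|$ in \eqref{Ducondo}.
\end{itemize}

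\textbf{Step 2: Linearization and iteration.}
\begin{itemize}
\item Set $w=u-\tfrac12x^\mathsf{T}Ax$. Then
\[
a_{ij}(x)D_{ij}w=f(x),\qquad a_{ij}(x)=\int_0^1 D_{M_{ij}}F\bigl(A+tD^2w\bigr)\,dt ,
\]
with $a_{ij}\to (I+A^2)^{-1}$ and $D^2w\to0$.
\item Following \cite{BLZ2015}, obtain an initial decay rate $|D^2w|=O(|x|^{-\epsilon})$ by a weak Harnack / oscillation decay argument on dyadic annuli.
\item Differentiate the equation: each $D_kw$ satisfies a divergence-type linear equation with right-hand side $D_kf$. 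Differentiating twice brings in $D^2f$ and, for $n=2$, $D^3f$; this is why \eqref{Ducondo} is assumed up to $l=2,3$.
\item Bootstrap: the error $(a_{ij}-(I+A^2)^{-1}_{ij})D_{ij}w=O(|D^2w|^2)$ combined with $f=O(|x|^{-\beta})$ improves the decay step by step, until $D^2w=O(|x|^{-\min\{\beta,n\}})$ up to a logarithm.
\end{itemize}

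\textbf{Step 3: Solving the constant-coefficient problem.}
\begin{itemize}
\item After the change of variables $x=(I+A^2)^{1/2}y$, the equation becomes $\Delta \tilde w=g$ with $g=O(|y|^{-\min\{\beta,2\}}\cdot\text{decay})$.
\item Since $\beta>2$, $g\in L^1$. The Newtonian potential of $g$ yields:
  \begin{itemize}
  \item for $n\ge3$, $\tilde w=b\cdot y+c+O(|y|^{2-\min\{\beta,n\}})$, with a $\ln$ factor when $\beta=n$ from the borderline integral $\int_{|z|<|y|}|z|^{-n}\,dz$;
  \item for $n=2$, a term $d\ln|y|$ whose coefficient is the total mass $\int g$. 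This gives formula \eqref{d}, with the Jacobian factor $\sqrt{\det(I+A^2)}$ and $\ln(x^\mathsf{T}(I+A^2)x)$ coming from undoing the change of variables. The remainder is $O(|x|^{-\min\{1,\beta-2\}})$, with a log at $\beta=3$.
  \end{itemize}
\item Removable harmonic pieces at infinity are handled by a spherical harmonic expansion. Higher derivative bounds $O_k$ for $k\le m+1$ follow from interior Schauder estimates on annuli $B_{|x|/2}(x)$ applied to the rescaled equation, using $f\in C^m$.
\end{itemize}

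\textbf{Main obstacle.} The hardest step is the initial quantitative rate in Step 2, i.e. passing from $D^2u\to A$ to a power decay. This requires differentiating the equation and controlling $D^2f$ (and $D^3f$ when $n=2$) in the perturbation. I would use the concavity-based $W^{2,\delta}$ / weak Harnack estimates for the differentiated equation on dyadic annuli to get geometric oscillation decay of $D^2u$.
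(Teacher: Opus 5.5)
Your Steps 2--3 are a reasonable outline of the iteration that \cite{BJ2026} borrows from \cite{BLZ2015}, but Step~1 has a genuine gap: the hypotheses give no upper bound on $D^2u$ and no growth control on $u$. The supercritical condition only yields the one-sided bound $\lambda_i(D^2u)>-\cot(2\delta)$. This has three consequences. The blow-downs $R^{-2}u(Ry)$ need not be bounded. The interior Hessian estimates for the Lagrangian mean curvature equation need control of $Du$ or $\operatorname{osc}u$ on the ball, which at scale $R$ means linear growth of $Du$ (quadratic growth of $u$), and this is not assumed. Finally, the linearized operator $F^{ij}(D^2u)=\bigl((I+(D^2u)^2)^{-1}\bigr)_{ij}$ is not uniformly elliptic, so Evans--Krylov, the weak Harnack inequality and the oscillation-decay argument of your Step~2 cannot be run on $u$. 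The sentence ``one derives gradient and Hessian bounds'' is exactly the unavailable step: $|Du|\le C|x|$ and $D^2u\to A$ are outputs of the proof, not inputs. (Even granting a Hessian bound, a Liouville theorem for blow-down limits only gives subsequential limits in $\mathcal A$, possibly different along different sequences. Identifying a single $A$ needs an argument such as that of Lemma~\ref{lem:qualitative_hessian}.)

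The missing idea is the Lewy rotation of Section~\ref{sec2.1}. With $\tilde x=\mathfrak c x+\mathfrak s Du$ and $D\tilde u=-\mathfrak s x+\mathfrak c Du$, the new potential satisfies $|D^2\tilde u|<\cot\vartheta$ and a uniformly elliptic, concave equation with phase $\tilde\theta+h(\tilde x)$, $h(\tilde x)=f(x)$. Since $|\tilde x|\le|x|+|Du(x)|$, the weight $(|x|+|Du(x)|)^{\beta+l}$ in \eqref{Ducondo} is precisely what converts decay of $D^lf$ in $x$ into decay of $D^lh$ in $\tilde x$ (cf.\ Lemma~\ref{abf11}). It is not a nuisance to be discarded, as your remark about replacing it by $|x|$ suggests. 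Your Steps 2--3 should be carried out for $\tilde u$, yielding $D\tilde u=\tilde A\tilde x+\tilde b+o(1)$, and you must then transfer back through $x=\mathfrak c\tilde x-\mathfrak sD\tilde u$. This requires $\mathfrak cI-\mathfrak s\tilde A$ to be invertible, i.e.\ $\cot\vartheta$ is not an eigenvalue of $\tilde A$. Otherwise $x\cdot e=-\mathfrak s\,\tilde b\cdot e+o(1)$ for the corresponding eigenvector $e$, which is impossible at $x=Re$ with $R\to\infty$ (Proposition~\ref{rtp}(iii) and \cite{LLY2020}). Only then do you know $D^2u\to A$ and $|Du|\le C|x|$, and can expand $u$ itself. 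Two smaller points. First, the change of variables turning $F^{ij}(A)=\bigl((I+A^2)^{-1}\bigr)_{ij}$ into $\delta_{ij}$ is $y=(I+A^2)^{1/2}x$, the inverse of what you wrote; this is what produces $\ln|y|=\tfrac12\ln(x^\mathsf{T}(I+A^2)x)$. Second, the Schauder argument for $O_k$ needs $|D^lf|=O(|x|^{-\beta-l})$ for $l$ up to about $k-1$. The purely local assumption $f\in C^m$ does not supply this beyond the range $l\le 3$ covered by \eqref{Ducondo}.
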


A natural question is whether the regularity assumption on $f$ can be weakened, as in \cite{QB2025}. Owing to the recent interior Hessian estimates of Zhou \cite{Z2025}, we are able to lower the regularity requirement to $C^{0,1}$. In contrast to the asymptotic behavior results for the Monge--Amp\`ere equation obtained in \cite{QB2025,QB20252n}, the present setting requires the phase to be Lipschitz continuous, rather than merely H\"older continuous. Indeed, there exist viscosity solutions with H\"older continuous phase that fail to be $C^2$; see \cite{AB2021,AB2024}.

Our main results are the following two theorems. The first one gives the complete
optimal asymptotic behavior in dimension two. It contains both the fast convergence
case \(\beta>2\) and the slow convergence case \(0<\beta\le2\).

We assume that
\[
f\in C^0(\mathbb R^2)\cap C^{0,1}(\mathbb R^2\setminus\overline{\Omega})
\]
and that
\begin{equation}\label{Ducond}
\limsup_{|x|\to\infty} \left(|x|+|Du(x)|\right)^{\beta}|f(x)|+
\left(|x|+|Du(x)|\right)^{\beta+1}[f]_{C^{0,1}(B_{|x|/2}(x))}<\infty.
\end{equation}
Here and below, \([\cdot]_{C^{0,\alpha}(U)}\), \(0<\alpha\le1\), denotes the
H\"older seminorm on a domain \(U\).

For $A\in \mathcal A$, $b\in \mathbb{R}^2$, \(c,d\in\mathbb R\), we define
\[P_1(x)=
\begin{cases}
	\frac12x^\mathsf{T}Ax, & 0<\beta\le1, \\
	\frac12x^\mathsf{T}Ax+b^\mathsf{T}x, & 1<\beta\le2,\\
	\frac12x^\mathsf{T}Ax+b^\mathsf{T}x+\frac d2\ln\left(x^\mathsf{T}(I+A^2)x\right)+c, & \beta>2.
\end{cases}
\]
We obtain the following asymptotic behavior:
\begin{thm}\label{D2ue}
Let \(n=2\), \(\theta>0\), and let \(u\) be a viscosity solution of~\eqref{eq}. Assume that \(f\) satisfies \eqref{Ducond} for some
\(\beta>0\). If \(0<\beta\le1\), assume that \(f\) satisfies \eqref{Ducond} without the \(Du\)-term and additionally assume that \(u\) has a quadratic upper growth bound at infinity. Then for every \(\alpha\in(0,1)\), \(u\in C^{2,\alpha}_{\mathrm{loc}}
(\mathbb R^2\setminus\overline\Omega)\), and there exist \(A\in\mathcal A\), \(b\in\mathbb R^2\), 
\(c,d\in\mathbb R\) such that
\begin{equation}\label{bg2}
	u(x)-P_1(x)=\begin{cases}
		O_2\left(|x|^{2-\beta}(\ln|x|)^{[\beta]}\right), & 0<\beta\le1, \\
		O_2\left(|x|^{2-\beta}(\ln|x|)^{2([\beta]-1)}\right), & 1<\beta\le2,\\
		O_2\left(|x|^{-\min \left\{1,\beta-2 \right\}}(\ln|x|)^{\mu_1}\right), & \beta>2,
	\end{cases}
\end{equation}
as $|x|\to +\infty$. The constant $d$ is given by \eqref{d}, and $\mu_1$ is defined as above. Here \([\beta]\) denotes the integer part of \(\beta\).

Moreover,
\[\left[D^2\left(u(x)-P_1(x)\right)\right]_{C^{0,\alpha}(B_{|x|/2}(x))}=\begin{cases}
	O\left(|x|^{-\beta-\alpha}(\ln|x|)^{[\beta]}\right), & 0<\beta\le1, \\
	O\left(|x|^{-\beta-\alpha}(\ln|x|)^{2([\beta]-1)}\right), & 1<\beta\le2,\\
	O\left(|x|^{-\sigma-\alpha}(\ln|x|)^{\mu_1}\right), & \beta>2,
\end{cases}
\]
as $|x|\to +\infty$.
\end{thm}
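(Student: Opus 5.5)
We would follow the standard blow-down scheme for exterior problems, adapted to a Lipschitz right-hand side, in four steps.

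\emph{Step 1: uniform $C^{1,1}$ and $C^{2,\alpha}$ bounds on rescalings.} For $|x_0|=R$ large, set $u_R(y)=R^{-2}u(x_0+Ry)$ on $B_{1/2}$. Then $F(D^2u_R)=\theta+f(x_0+R\,\cdot)$, and the phase has oscillation $O(R^{-\beta})$ and Lipschitz constant $O(R^{-\beta})$ on $B_{1/2}$ by \eqref{Ducond}. For $0<\beta\le1$ we use the quadratic upper growth; for $\beta>1$ the $Du$-weight in \eqref{Ducond} compensates. In both cases we first get $|u_R|\le C$ from semiconvexity and growth information: the supercritical phase gives $\lambda_{\min}(D^2u)>-C$ by Yuan's lower bound. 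Zhou's interior Hessian estimate for Lipschitz phase then bounds $|D^2u_R|$ in $B_{1/4}$. Because the operator extends concavely in the supercritical range, Evans--Krylov with a $C^{0,1}$ right-hand side yields $\|u_R\|_{C^{2,\alpha}(B_{1/8})}\le C$. This gives $u\in C^{2,\alpha}_{\mathrm{loc}}$ and $D^2u$ bounded near infinity.

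\emph{Step 2: convergence of the Hessian to some $A\in\mathcal A$ with a rate.} We pass to the Legendre--Lewy rotation, so that the rotated potential satisfies a uniformly elliptic concave equation on an exterior domain. Blow-downs converge to quadratic solutions of $F=\theta$. Liouville uniqueness via Yuan's Bernstein theorem for supercritical phase gives a single limit $A$.

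To get a rate we use scale-dependent difference quotients in place of differentiating $f$ twice. For $e\in S^1$ and $h=\delta|x|$, the quotient $w=(u(x+he)-u(x))/h$ satisfies the linear equation $a_{ij}D_{ij}w=(f(x+he)-f(x))/h$. Here $a_{ij}=\int_0^1F_{ij}(tD^2u(x+he)+(1-t)D^2u(x))\,dt$ is uniformly elliptic and has $C^\alpha$ modulus decaying like $|x|^{-\alpha}$ by Step 1. The right-hand side is $O(|x|^{-\beta-1})$ by \eqref{Ducond}. Comparing $a_{ij}$ with $F_{ij}(A)$, we rewrite the equation as $F_{ij}(A)D_{ij}w=g$ with $g=O(|x|^{-\beta-1})+o(1)|D^2w|$. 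Iterating $C^{1,\alpha}$ estimates over dyadic annuli then yields $|D^2u-A|=O(|x|^{-\epsilon})$ for some $\epsilon>0$. Once this is known, the linearization error becomes quadratic, and a bootstrap improves the rate up to $\beta$.

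\emph{Step 3: nonlocal potential and the expansion.} Write $v=u-\frac12x^{\mathsf T}Ax$. After the linear change of variables making $F_{ij}(A)$ the identity, we have $\Delta v=g$, where
\[
g=O(|x|^{-\beta})+O(|D^2v|^2),
\]
and $g$ is also controlled in $C^\alpha$ on balls $B_{|x|/2}(x)$. We represent $v$ by the Newtonian potential $\frac1{2\pi}\int\ln|x-y|\,g(y)\,dy$ of $g$ cut off near infinity, plus a harmonic function on the exterior domain.

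\begin{enumerate}[label=(\roman*)]
\item \emph{Case $\beta>2$.} The function $g$ is integrable. The potential then produces $d\ln|x|+O(|x|^{2-\beta})$, with a logarithm when $\beta=3$. The harmonic part with at most linear growth expands as $b^{\mathsf T}x+c+O(|x|^{-1})$. Undoing the change of variables turns $\ln|x|$ into $\frac12\ln(x^{\mathsf T}(I+A^2)x)$. Computing the flux identifies $d$ with \eqref{d}.
\item \emph{Case $1<\beta\le2$.} The potential grows like $|x|^{2-\beta}$, with $(\ln|x|)^2$ at $\beta=2$, coming from the double integration of $|y|^{-2}$. The linear term $b$ survives.
\item \emph{Case $0<\beta\le1$.} Only $A$ survives, with error $O(|x|^{2-\beta})$ and one logarithm at $\beta=1$.
\end{enumerate}

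Derivative bounds up to order $2$ and the weighted $C^{2,\alpha}$ seminorm bound follow by rescaled Schauder estimates applied to $u-P_1$ on $B_{|x|/2}(x)$. Here $\sigma=\min\{1,\beta-2\}+2$ denotes the decay exponent of $D^2(u-P_1)$.

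\emph{Main obstacle.} We expect Step 2 to be the hardest: obtaining an initial algebraic decay of $D^2u-A$ when $f$ is only Lipschitz. We would first try the difference-quotient equation combined with Krylov--Safonov and $C^{1,\alpha}$ estimates on dyadic annuli, taking $h$ proportional to the scale. This should give an oscillation decay $\operatorname{osc}_{B_{r}(x)}D^2u\le C(|x|^{-\epsilon}+|x|^{-\beta})$. Concavity is used to avoid needing second derivatives of $f$.
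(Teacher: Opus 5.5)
Your overall architecture matches the paper's: Lewy rotation, difference quotients plus concavity instead of differentiating $f$ twice, then a potential-theoretic expansion of $u-\frac12x^{\mathsf T}Ax$. Replacing the paper's fractional-Laplacian device in Step~3 by classical logarithmic potentials with weighted H\"older control is a workable alternative.

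The gap is in Step~2, which is the core of the theorem. Yuan's Bernstein theorem only says that each subsequential blow-down limit is a quadratic $\frac12 y^{\mathsf T}A'y$ with $A'\in\mathcal A$. Nothing prevents different sequences $R_k\to\infty$ from producing different $A'$, so ``Liouville uniqueness'' does not give $D^2u\to A$. Your rate argument then uses $a_{ij}-F_{ij}(A)=o(1)$, so it presupposes the convergence it is meant to quantify.

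The difference-quotient device is also set up incorrectly. With $h=\delta|x|$ depending on $x$, $w$ does not satisfy $a_{ij}D_{ij}w=(f(x+he)-f(x))/h$. If instead $h=\delta R$ is frozen on each annulus, you get a different function on every annulus. Then there is no single solution on which to run an oscillation-decay iteration across dyadic annuli, and $Dw$ is an average of $D^2u$ over segments of length comparable to the scale rather than an approximation of $D^2u$. Moreover, a large step buys nothing for first differences: $|(f(x+he)-f(x))/h|\le[f]_{C^{0,1}(B_{|x|/2}(x))}\le C|x|^{-\beta-1}$ already holds uniformly in small $h$. A scale-dependent step only helps for second differences, which is how the paper uses $h_R=R^{1-\sigma}$ when $n\ge3$.

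The missing idea for $n=2$ is the one in Theorem~\ref{wdecay}. Take $h$ small and fixed, and note that $|Dv_h|\le M$ while the forcing is $O(|x|^{-\beta-1})$ with $\beta+1>1$. Then apply the two-dimensional exterior gradient-decay result, Theorem~\ref{cadu}, whose constants depend only on the ellipticity constants, the decay exponent and the gradient bound, not on any regularity or convergence of the coefficients. This gives $|Dv_h-p_{k,h}|\le C|x|^{-\varepsilon}$ uniformly in $h$. Letting $h\to0$ yields the limit $A$ and the rate $|D^2u-A|\le C|x|^{-\varepsilon}$ at once, with no blow-down uniqueness needed.

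There is a second omission tied to Step~1. For $\beta>1$ no quadratic growth is assumed, so $|u_R|\le C$ and boundedness of $D^2u$ in the original variables are not available a priori. The $Du$-weight in \eqref{Ducond} does not control $u$. Its role is to make the rotated right-hand side $f(\mathfrak c\tilde x-\mathfrak sD_{\tilde x}\tilde u)$ decay in $|\tilde x|$, using $|\tilde x|\le|x|+|Du(x)|$ (Lemma~\ref{abf11}). So Step~2 must be run for the rotated potential $\tilde u$, and you then need the back-transfer of Proposition~\ref{rtp}(iii). There you must show that $\tilde A$ has no eigenvalue equal to $\cot\vartheta$, so that $\mathfrak cI-\mathfrak s\tilde A$ is invertible and $D^2u\to(\mathfrak sI+\mathfrak c\tilde A)(\mathfrak cI-\mathfrak s\tilde A)^{-1}$. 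Your proposal does not address this step.
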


\begin{rem}
	We claim that all the asymptotic estimates in Theorem~\ref{D2ue} are optimal. More precisely, under assumption~\eqref{Ducond}, the remainder terms in \eqref{bg2} are sharp in the sense that they cannot, in general, be replaced by terms of strictly smaller order.
	
	When $\beta = 1$, optimality can be seen by considering the solution
	\[
	u_0(x)=\frac{1}{2}\tan\frac{\theta}{2}\,|x|^2+g_0(x),
	\]
	where $g_0$ is defined by
	\[
	g_0(x):=
	\begin{cases}
		\text{smooth}, & 0\le |x|\le 2,\\
		(x_1+x_2)\ln|x|, & |x|>2.
	\end{cases}
	\]
	A direct computation shows that
	\[
		\begin{aligned}
		\arctan\lambda_1(D^2u_0(x))+\arctan\lambda_2(D^2u_0(x))
		&=\theta+f_0(x) \\
		&=\theta+O_1(|x|^{-1}),
		\qquad \text{as } |x|\to+\infty.
		\end{aligned}
		\]
	
	When $\beta\in (0,2]$, we consider the radially symmetric function $f_1$ defined by
	\[
	f_1(x):=
	\begin{cases}
		\text{smooth}, & 0\le |x|\le 2,\\
		|x|^{-\beta}, & |x|>2.
	\end{cases}
	\]
	According to the computations in \cite{LB2023ANS,BLW2024}, the solution of \eqref{eq} with right-hand side $f_1$ has the asymptotic behavior
	\[
	u_1(x)=\frac{1}{2}\tan\frac{\theta}{2}|x|^2+
	\begin{cases}
		O_1(|x|^{2-\beta}), & \beta\in(0,2),\\
		O_1\bigl((\ln|x|)^2\bigr), & \beta=2,
	\end{cases}
	\qquad \text{as } |x|\to+\infty.
	\]
	
	To illustrate the optimality of \eqref{bg2} when $2<\beta<3$, consider
	\[
	u_2(x)=\frac{1}{2}\tan\frac{\theta}{2}|x|^2+d\ln|x|+\frac{d}{2}\ln\bigl(A_*^2+1\bigr)+c+g_2(x),
	\]
	 and
	\[
	g_2(x):=
	\begin{cases}
		\text{smooth}, & |x|\le 2,\\
		|x|^{2-\beta}, & |x|>2.
	\end{cases}
	\]
	A direct computation yields
	\[
	\arctan\lambda_1(D^2u_2)+\arctan\lambda_2(D^2u_2)
	=\theta+f_2(x)
	=\theta+O_1(|x|^{-\beta}),
	\qquad \text{as } |x|\to+\infty.
	\]
	
	For $\beta=3$, the logarithmic factor is also optimal. Indeed, consider
	\[
	u_3(x)=\frac{1}{2}\tan\frac{\theta}{2}|x|^2+d\ln|x|+\frac{d}{2}\ln\bigl(A_*^2+1\bigr)+c+g_3(x),
	\]
	where
	\[
	g_3(x):=
	\begin{cases}
		\text{smooth}, & |x|\le 2,\\[4pt]
		\displaystyle \frac{(x_1+x_2)\ln|x|}{|x|^2}, & |x|>2,
	\end{cases}
	\]
	and $g_3(x)=O\bigl(|x|^{-1}\ln|x|\bigr)$, as $|x|\to+\infty$.
	A direct computation yields
	\[
		\begin{aligned}
		\arctan \lambda_1(D^2u_3(x))+\arctan \lambda_2(D^2u_3(x))
		&=\theta+f_3(x) \\
		&=\theta+O_1(|x|^{-3}),
		\qquad \text{as } |x|\to+\infty.
		\end{aligned}
		\]
	Moreover, one can verify that the above $f_i$ and $u_i$, $i=0,1,2,3$, satisfy \eqref{Ducond}.
	
	Finally, when $\beta>3$, the optimality follows from \eqref{fd5}.
\end{rem}

Similarly, for $A\in \mathcal A$, $b\in \mathbb{R}^n$, \(c\in\mathbb R\), we define
\[P_2(x)=
\begin{cases}
	\frac12x^\mathsf{T}Ax, & 0<\beta\le1, \\
	\frac12x^\mathsf{T}Ax+b^\mathsf{T}x, & 1<\beta\le2,\\
	\frac12x^\mathsf{T}Ax+b^\mathsf{T}x+c, & \beta>2.
\end{cases}
\]

We next state the higher-dimensional result. 
\begin{thm}\label{D2uehd}
Let \(n\ge3\), \(\theta>(n-2)\pi/2\), and let \(u\) be a viscosity solution of~\eqref{eq}. Assume that \(f\) satisfies \eqref{Ducond} for some
\(\beta>0\). If \(0<\beta\le1\), assume that \(f\) satisfies \eqref{Ducond} without the \(Du\)-term and additionally assume that \(u\) has a quadratic upper growth bound at infinity. Then for every \(\alpha\in(0,1)\), \(u\in C^{2,\alpha}_{\mathrm{loc}}
(\mathbb R^n\setminus\overline\Omega)\), and there exist \(A\in\mathcal A\), \(b\in\mathbb R^n\), 
\(c\in\mathbb R\) such that
\begin{equation}\label{bgn}
	u(x)-P_2(x)=\begin{cases}
		O_2\left(|x|^{2-\beta}(\ln|x|)^{[\beta]}\right), & 0<\beta\le1, \\
		O_2\left(|x|^{2-\beta}(\ln|x|)^{[\beta]-1}\right), & 1<\beta\le2,\\
		O_2\left(|x|^{2-\min \left\{n,\beta \right\}}(\ln|x|)^{\mu_2}\right), & \beta>2,
	\end{cases}
\end{equation}
as $|x|\to +\infty$. The constant $\mu_2$ is defined as above. 

Moreover, the corresponding local H\"older estimates for the Hessian remainders also hold; namely, if the remainder is denoted by \(\phi\), then
\[
 [D^2\phi]_{C^{0,\alpha}(B_{|x|/2}(x))}
\]
has the decay obtained by differentiating the corresponding \(O_2\) estimate and losing an additional factor \(|x|^{-\alpha}\).
\end{thm}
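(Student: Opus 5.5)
The plan has four steps: blow down to fix the limiting matrix \(A\), get a first Hessian decay rate from a scale-dependent difference-quotient argument, then use a nonlocal potential representation to read off the linear and constant terms, and finally bootstrap the decay up to \(\min\{\beta,n\}\).

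\emph{Step 1: regularity and the limit matrix.} The supercritical phase \(\theta>(n-2)\pi/2\) together with \(f\to0\) means that outside a large ball the equation \(F(D^2u)=\theta+f\) has concave level sets after the extension of \cite{CPW2017,CW2019}. By Zhou's interior Hessian estimates \cite{Z2025}, Lipschitz phase then gives \(D^2u\in L^\infty_{\rm loc}\). Evans--Krylov with a Lipschitz right-hand side upgrades this to \(u\in C^{2,\alpha}_{\rm loc}(\mathbb R^n\setminus\overline\Omega)\) for every \(\alpha<1\).

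Next rescale: set \(u_R(y)=R^{-2}u(Ry)\) on \(1\le|y|\le 4\). Assumption \eqref{Ducond} controls \(f\) in terms of \(|x|+|Du|\). In the case \(0<\beta\le1\) we instead use the quadratic upper growth bound. Either way we obtain \(|Du|\le C|x|\) and a uniform \(C^{1,1}\) bound on \(u_R\), via the Hessian estimate applied on balls \(B_{|x|/2}(x)\). The rescaled equation is \(F(D^2u_R)=\theta+f(Ry)\), where \(\|f(R\cdot)\|_{C^{0,1}}=O(R^{-\beta})\). Compactness, plus the Liouville/Bernstein theorem for supercritical special Lagrangian solutions in exterior domains \cite{LLY2020,Yuan2006}, yields \(D^2u(x)\to A\in\mathcal A\).

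\emph{Step 2: a first decay rate via difference quotients.} For \(e\in S^{n-1}\) and the scale \(h=|x|/8\), form \(\delta_{h,e}u=(u(\cdot+he)-u)/h\). Each \(\delta_{h,e}u\) satisfies a linear uniformly elliptic equation \(a^{ij}(x)D_{ij}\delta_{h,e}u=\delta_{h,e}f\), where
\[
a^{ij}=\int_0^1 F_{ij}\bigl(tD^2u(\cdot+he)+(1-t)D^2u\bigr)\,dt .
\]
Two facts drive this step. First, \(a^{ij}\to (I+A^2)^{-1}\) at infinity, which after a linear change of variables is the Laplacian. Second, \(|\delta_{h,e}f|\le[f]_{C^{0,1}}\le C|x|^{-\beta-1}\). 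This is exactly why only Lipschitz regularity of \(f\) is needed: we never differentiate \(f\) twice.

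Krylov--Safonov/Hölder estimates on dyadic annuli then give a decay rate \(|x|^{-\epsilon}\) for \(D^2u-A\). We iterate this over dyadic scales, and Cordes--Nirenberg type \(C^{1,\alpha}\) estimates convert difference quotients into genuine \(D^2\) decay.

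\emph{Step 3: potential representation and the terms of \(P_2\).} Write \(w=u-\tfrac12x^{\mathsf T}Ax\). Subtracting \(F(A)=\theta\) gives
\[
\tilde a^{ij}D_{ij}w=f,\qquad\text{with }\ \tilde a^{ij}-\tilde a^{ij}(A)=O(|D^2w|).
\]
Rewrite this as \(\tilde a^{ij}(A)D_{ij}w=f+O(|D^2w|^2)\). After changing variables so that \(\tilde a^{ij}(A)\) becomes the identity, we have \(\Delta w=g\) with \(|g|\le C|x|^{-\beta}+C|D^2w|^2\).

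Cut off near \(\Omega\) and take the Newtonian potential \(\Gamma*g\), which is the nonlocal step. Standard potential estimates give the following, with \(\gamma\) the current decay rate of \(D^2w\):
\begin{itemize}
\item for \(\gamma<1\), the potential is \(O(|x|^{2-\gamma})\);
\item at \(\gamma=1\) and \(\gamma=2\) a logarithm is lost;
\item for \(\gamma>2\), the remainder is \(O(|x|^{2-\min\{n,\gamma\}})\), with a log at \(\gamma=n\).
\end{itemize}
The difference \(w-\Gamma*g\) is harmonic in an exterior domain with subquadratic growth. It therefore expands as a linear function \(b^{\mathsf T}x\), plus a constant \(c\), plus \(O(|x|^{2-n})\); the linear and constant terms appear exactly in the ranges where \(P_2\) includes them.

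\emph{Step 4: bootstrap.} Iterate Steps 2--3, replacing \(\gamma\) by \(\min\{\beta,2\gamma\}\), until \(\gamma\) reaches \(\min\{\beta,n\}\). This produces \eqref{bgn} for \(t=0,1,2\). The statement that \(D^2u\) is bounded by the rescaled \(O_2\) estimate follows from rescaled interior Schauder estimates on \(B_{|x|/2}(x)\), together with the fact that the quadratic term \(|D^2w|^2\) is of higher order. The same rescaled estimates give the Hölder seminorm bound, losing the extra factor \(|x|^{-\alpha}\).

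The main obstacle is Step 2. An initial algebraic rate for \(D^2u-A\) has to come out of a merely Lipschitz \(f\), with constants uniform across dyadic scales. The difference-quotient step size must be tied to \(|x|\) so that \(\delta_{h,e}f\) keeps its decay while the domain of \(\delta_{h,e}u\) stays inside the region where the estimates hold. A second, smaller difficulty is tracking the logarithms exactly at the borderline cases \(\beta=1,2,n\).
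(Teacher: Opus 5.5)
Your Steps 3--4 are sound. Writing $\Delta w=f+O(|D^2w|^2)$ after normalizing $F^{ij}(A)=\delta_{ij}$, and bootstrapping Newtonian-potential estimates with $\gamma\mapsto\min\{\beta,2\gamma\}$, is a simpler substitute for the paper's fractional-Laplacian route through Lemma~\ref{ab2} and Section~\ref{sec3}. The gaps are in Steps 1--2, which must supply a bounded Hessian and an initial algebraic rate.

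\textbf{Step 1.} For $\beta>1$ the theorem assumes no growth bound on $u$. The $|Du|$-weight in \eqref{Ducond} is a decay hypothesis, not a gradient bound, and supercriticality gives only $D^2u\ge -CI$. So your claim that ``either way'' $|Du|\le C|x|$ is unfounded. Without it you have no uniform bound on $\osc u_R$. Hence Zhou's estimate gives no uniform $C^{1,1}$ bound, and you get neither uniform ellipticity nor compactness.

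The missing idea is the Lewy rotation \eqref{rtfm}. The rotated potential automatically satisfies $|D^2\tilde u|<\cot\vartheta$. Since $|\tilde x|\le |x|+|Du|$, the weight in \eqref{Ducond} is exactly what makes the rotated phase decay in $|\tilde x|$ (Lemma~\ref{abf11}). At the end one transfers back through Proposition~\ref{rtp}(iii), which requires excluding the eigenvalue $\cot\vartheta$ for $\tilde A$. That exclusion is where the quadratic growth bound enters when $\beta\le1$.

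Even with a bounded Hessian, blow-down compactness plus a Liouville theorem gives only subsequential limits. Uniqueness of $A$ is precisely what has to be proved.

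\textbf{Step 2.} Your mechanism cannot produce the rate. A first difference $\delta_{h,e}u$ solves a linear equation whose coefficients are controlled only through ellipticity. Krylov--Safonov then gives H\"older control of a first-order quantity on each annulus, relative to that annulus's own scale. This yields no geometric decay of $\osc D^2u$ across scales, and no uniqueness of the limit. In $n\ge3$ there is no analogue of the two-dimensional Theorem~\ref{cadu}; this is why the paper's first-difference argument, Theorem~\ref{wdecay}, is confined to $n=2$. Cordes--Nirenberg needs coefficients already close to constant and is still only an interior estimate. With $a^{ij}\to F^{ij}(A)$ at no prescribed rate, you cannot expect an algebraic rate for the gradient of exterior solutions.

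Your choice $h\sim|x|$ makes things worse. The quotient becomes an average of $u_e$ over a segment of length $\sim|x|$ and carries no pointwise information on $D^2u$. Moreover, for first differences a large $h$ gives no gain in $\delta_{h,e}f$.

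What is needed is second-order information. Take the pure second difference $w_R$ with intermediate step $h_R=R^{1-\sigma}$, $0<\sigma<\min\{\beta,1\}$. Concavity of $F$ gives the one-sided inequality $Lw_R\ge g_R$.
\begin{itemize}
\item Since $h_R\to\infty$, the forcing satisfies $R^2|g_R|\le CR^{\sigma-\beta}$.
\item Since $h_R/R\to0$, the approximation satisfies $|w_R-u_{ee}|\le CR^{-\sigma\alpha}$.
\end{itemize}
Comparison on annuli, plus the weak Harnack inequality for the nonnegative supersolution obtained by subtracting $w_R$ from a constant, gives $\limsup u_{ee}=\liminf u_{ee}$. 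An annular Campanato iteration then gives $|D^2u-A|\le C|x|^{-\varepsilon_0}$. That iteration blows $(D^2u-B_k)/\delta_k$ down to a constant-coefficient exterior solution and applies Lemma~\ref{lem:linear_decay}.
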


\begin{rem}
The rates in Theorem~\ref{D2uehd} are also optimal.

First, we consider the critical case \(\beta=1\). Let
\[
u_4(x)=\frac{1}{2}\tan\frac{\theta}{n}|x|^2+g_4(x),
\]
where
\[
g_4(x):=
\begin{cases}
	\text{smooth}, & 0\le |x|\le 2,\\[2mm]
	\displaystyle \left(\sum_{i=1}^n x_i\right)\ln |x|, & |x|>2.
\end{cases}
\]
A direct computation gives
\[
F(D^2u_4(x))
=
\theta+f_4(x)
=
\theta+O_1(|x|^{-1}),
\qquad |x|\to\infty.
\]

For the remaining slow convergence cases, namely
\(\beta\in(0,2]\), we use radial examples. Let \(f_5\) be a smooth
function satisfying
\[
f_5(x):=
\begin{cases}
	\text{smooth}, & 0\le |x|\le 2,\\[2mm]
	|x|^{-\beta}, & |x|>2.
\end{cases}
\]
According to the computations in \cite{LB2023ANS,BLW2024}, the radial solution
of \eqref{eq} with right-hand side \(\theta+f_5\) has the asymptotic behavior
\[
u_5(x)
=
\frac{1}{2}\tan\frac{\theta}{n}|x|^2
+
\begin{cases}
	O_1(|x|^{2-\beta}), & 0<\beta<2,\ \beta\neq1,\\[1mm]
	O_1(\ln |x|), & \beta=2,
\end{cases}
\qquad |x|\to\infty.
\]
These orders agree with the corresponding remainders in Theorem~\ref{D2uehd};
hence they are sharp. Moreover, one can verify that the above \(f_i\) and
\(u_i\), \(i=4,5\), satisfy \eqref{Ducond}.

Finally, for the fast convergence case \(\beta>2\), the optimality of
\eqref{bgn} follows from the global existence result in
\cite{BLW2024}. 
\end{rem}

\begin{rem}
	Note that any function $f$ satisfying \eqref{Ducondo} automatically satisfies \eqref{Ducond}. Hence, Theorems~\ref{D2ue} and~\ref{D2uehd} apply under strictly weaker regularity assumptions on $f$ than Theorem~\ref{D2ueo}.
	On the other hand, in \cite[Theorem~1]{LLY2020}, the right-hand side $f$ is assumed to vanish outside a compact subset of $\mathbb{R}^n$. This clearly implies \eqref{Ducond} for every $\beta>2$. Therefore, the result in \cite[Theorem~1]{LLY2020} can be viewed as a particular case of the asymptotic expansions \eqref{bg2} and \eqref{bgn} under supercritical phase, obtained under a stronger decay assumption on $f$.
	Furthermore, the interior gradient estimate from \cite{AB2021} shows that any solution of \eqref{eq} with a quadratic upper growth bound at infinity must have at most linear growth of its gradient at infinity. In particular, condition~\eqref{Ducond} is automatically fulfilled in this setting. It follows that Theorem~\ref{D2ue} also includes the main result of Liu--Bao \cite{LB2022}.
\end{rem}

\begin{rem}\label{etu}
By an extension argument as in \cite[Theorem~3.2]{Y2014}, we may modify the value of \(f\) on \(\Omega\) without affecting the asymptotic behavior near infinity. Moreover, interior estimates such as those in Lemma~17.16 of \cite{GT}, together with the Hessian estimates of Zhou \cite{Z2025}, imply that the local regularity assumptions used below are available under the hypotheses of the main theorems.
\end{rem}

This paper is organized as follows. In Section~\ref{sec2}, we introduce the Lewy rotation and obtain the Hessian convergence rate at infinity. The main new ingredient is a scale-dependent second difference argument, together with an annular Campanato improvement, which yields a quantitative convergence rate under only Lipschitz regularity of the phase. In Section~\ref{sec3}, we treat the fast convergence case \(\beta>2\) in both dimension two and higher dimensions. In Section~\ref{sec4}, we study the slow convergence case \(0<\beta\le2\).

	\section{Hessian convergence at infinity}\label{sec2}
	\subsection{Lewy rotation in supercritical phase}\label{sec2.1}

	Throughout this paper, we assume that for sufficiently large $|x|$,
	\[
	\theta + f(x) >\frac{(n-2)\pi}{2}+ 2\delta \quad \text{and} \quad |f(x)| \le \delta
	\]
	for some positive constant $\delta$.
	
	Following \cite{LLY2020,Y2002,Yuan2006}, we first perform a transformation of the solution so that the Hessian of the new potential $\tilde{u}$ becomes bounded. Choosing the rotation angle $\vartheta = \delta/n$, define
	\begin{equation}\label{rtfm}
		\tilde{x} = \mathfrak{c}x + \mathfrak{s} Du(x), \qquad 
		\tilde{y} = -\mathfrak{s}x + \mathfrak{c} Du(x),
	\end{equation}
	where $(\mathfrak{c}, \mathfrak{s}) = (\cos\vartheta, \sin\vartheta)$.
	
	Let $\tilde{u}(\tilde{x}) = \int^{\tilde{x}} \tilde{y} \cdot d\tilde{x}$. Then $D_{\tilde{x}}\tilde{u} = \tilde{y}$,
	\[
	D_{\tilde{x}}^2\tilde{u} = \bigl(-\mathfrak{s}I + \mathfrak{c} D^2u\bigr)\bigl(\mathfrak{c}I + \mathfrak{s} D^2u\bigr)^{-1},
	\]
	and $\tilde{u}$ satisfies the equation
	\begin{equation}\label{npeq}
		\sum_{i=1}^n \arctan \lambda_i(D_{\tilde{x}}^2 \tilde{u}) = \tilde{\theta} + \tilde{f}\bigl(\tilde{x}, D_{\tilde{x}}\tilde{u}(\tilde{x})\bigr), \quad 
		|D_{\tilde{x}}^2 \tilde{u}| < \cot\vartheta \quad \text{in } \mathbb{R}^n \setminus \overline{\tilde{\Omega}},
	\end{equation}
	where $\tilde{\theta} = \theta - \delta > 0$, 
	$\tilde{f}\bigl(\tilde{x}, D_{\tilde{x}}\tilde{u}(\tilde{x})\bigr) = f(x) = f\bigl(\mathfrak{c}\tilde{x} - \mathfrak{s} D_{\tilde{x}}\tilde{u}(\tilde{x})\bigr)$, 
	and $\tilde{\Omega} = \tilde{x}(\Omega)$ is a bounded domain.
	
	Consequently, the right-hand side of \eqref{npeq} lies in the supercritical phase, which ensures that $F$ is concave in the level-set sense and can be modified into a concave operator. For further details, see \cite[Lemma~2.2]{CPW2017} and \cite{CW2019}. Without ambiguity, we continue to denote the modified operator by $F$ and regard it as concave.
	
	\begin{rem}
		By Remark~\ref{etu}, together with the fact that the map $x \mapsto \tilde{x}$ is a diffeomorphism from $\mathbb{R}^n$ onto itself (see \cite{LLY2020}), the function $\tilde{u}$ can be defined on $\mathbb{R}^n$ as well. Moreover, $\tilde{u} \in C_{\mathrm{loc}}^{2}(\mathbb{R}^n \setminus \overline{\tilde{\Omega}})$ because the right-hand side of \eqref{npeq} lies in the supercritical phase.
	\end{rem}

	For the new potential in \eqref{npeq}, we have the following proposition.
	\begin{prop}\label{rtp} 
		Let $h(\tilde{x})=\tilde{f} (\tilde{x}, D_{\tilde{x}}\tilde{u} (\tilde{x}))$ and $f_i=\partial f/\partial x_i$, $i=1,\cdots,n$. Suppose that the potential function satisfies \eqref{rtfm} and \eqref{npeq}. Then
		\begin{itemize}
			\item[(i)] 
			$|x|\to +\infty$ as $|\tilde{x}|\to +\infty$.
			\item[(ii)] 
			$|D_{\tilde{x}}h|\leq C(\delta)|D_{x}f|$.
			\item[(iii)]
			Assume that there exists a constant symmetric matrix \(\tilde A\) satisfying
			\(F(\tilde A)=\tilde\theta\) such that
			\[
			D_{\tilde{x}}\tilde{u}(\tilde{x})
			=
			\tilde A\tilde{x}
			+
			O_1(|\tilde{x}|^{\zeta})
			\qquad \text{as } |\tilde{x}|\to+\infty,
			\]
			for some \(\zeta<1\). If $0< \zeta<1$, assume moreover that \(u\) has a quadratic upper growth bound at infinity, namely
			\[
			u(x)\le C(1+|x|^2)
			\qquad \text{as } |x|\to+\infty.
			\]
			Then
			\[
			D_x^2u(x)\to A
			\qquad \text{as } |x|\to+\infty,
			\]
			where
			\[
			A=
			\left(\mathfrak{s}I+\mathfrak{c}\tilde A\right)
			\left(\mathfrak{c}I-\mathfrak{s}\tilde A\right)^{-1}.
			\]
		\end{itemize}
	\end{prop}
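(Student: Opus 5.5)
We prove the three items separately. Each is a direct consequence of the Lewy rotation \eqref{rtfm} and the Hessian bound $|D^2_{\tilde x}\tilde u|<\cot\vartheta$ in \eqref{npeq}.

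\emph{Item (i).} Under the supercritical phase assumption every eigenvalue satisfies $\arctan\lambda_i> -\pi/2+2\delta$, so $\lambda_i(D^2u)>-\cot(2\delta)$. Since $\vartheta=\delta/n<2\delta$, we get $\mathfrak c+\mathfrak s\lambda_i\ge c_0(\delta)>0$. Differentiating $\tilde x=\mathfrak c x+\mathfrak s Du$ along a segment gives
\[
(\tilde x(x)-\tilde x(x'))\cdot(x-x')\ge c_0|x-x'|^2 ,
\]
and hence $|\tilde x(x)-\tilde x(x')|\ge c_0|x-x'|$. Conversely, the inverse map is $x=\mathfrak c\tilde x-\mathfrak s D\tilde u$, whose differential $\mathfrak cI-\mathfrak s D^2_{\tilde x}\tilde u$ is bounded because $|D^2\tilde u|<\cot\vartheta$. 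Therefore $|x|\le C(1+|\tilde x|)$ is not the needed direction; the needed one is
\[
|\tilde x|\le \mathfrak c|x|+\mathfrak s|Du(x)|.
\]
To conclude $|x|\to\infty$ as $|\tilde x|\to\infty$, I would argue by contradiction. If $x$ stays bounded along a sequence with $|\tilde x|\to\infty$, then since $x\mapsto\tilde x$ is a continuous bijection (cf.\ \cite{LLY2020}), $\tilde x$ would be bounded on a bounded set, which is a contradiction. Continuity of $Du$ gives exactly this.

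\emph{Item (ii).} Apply the chain rule to $h(\tilde x)=f(\mathfrak c\tilde x-\mathfrak s D\tilde u(\tilde x))$:
\[
D_{\tilde x}h=(\mathfrak cI-\mathfrak s D^2_{\tilde x}\tilde u)\,D_xf .
\]
Since $|D^2\tilde u|<\cot\vartheta$, the matrix has norm at most $\mathfrak c+\mathfrak s\cot\vartheta\le C(\delta)$. Because $f$ is only Lipschitz, I would state this a.e., or as a comparison of local Lipschitz constants via the Lipschitz map $\tilde x\mapsto x$.

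\emph{Item (iii).} From $D^2_{\tilde x}\tilde u=(-\mathfrak sI+\mathfrak cD^2u)(\mathfrak cI+\mathfrak sD^2u)^{-1}$, inverting the Möbius-type relation gives
\[
D^2u=(\mathfrak sI+\mathfrak c D^2\tilde u)(\mathfrak cI-\mathfrak s D^2\tilde u)^{-1}.
\]
The $O_1$ hypothesis gives $D^2_{\tilde x}\tilde u=\tilde A+O(|\tilde x|^{\zeta-1})\to\tilde A$. It remains to check that $\mathfrak cI-\mathfrak s\tilde A$ is invertible. Then continuity of the rotation formula yields $D^2u(x)\to A$, using (i) to convert $|\tilde x|\to\infty$ into $|x|\to\infty$; this actually needs the reverse implication, which follows because $|x|\ge c|\tilde x|-C$ from the inverse map having bounded differential.

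\emph{Main obstacle: invertibility of $\mathfrak cI-\mathfrak s\tilde A$.} Its eigenvalues are $\mathfrak c-\mathfrak s\tilde\lambda_i$, which vanish exactly when $\tilde\lambda_i=\cot\vartheta$. The bound in \eqref{npeq} only gives $\tilde\lambda_i\le\cot\vartheta$ in the limit, so an eigenvalue could sit on the boundary. That would correspond to $\lambda_i(D^2u)\to\infty$.

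To exclude this I would use growth control.

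\textbf{Case $\zeta\le0$.} Write
\[
x=\mathfrak c\tilde x-\mathfrak s D\tilde u=(\mathfrak cI-\mathfrak s\tilde A)\tilde x+O(|\tilde x|^{\zeta}) .
\]
A kernel vector $e$ of $\mathfrak cI-\mathfrak s\tilde A$ would force the map $\tilde x\mapsto x$ to send the ray $te$ to a bounded set, or to a set of sublinear growth. Since $x\mapsto \tilde x$ is a bijection and $|\tilde x(x)-\tilde x(x')|\ge c_0|x-x'|$, the inverse map is Lipschitz and moves points, which gives a contradiction. More concretely, along $te$ the points $x(te)$ would stay in a set of size $o(t)$. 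Yet by (i) and properness, the $\tilde x$-images of a ball of radius $R$ contain a set whose diameter, via $Du$, is only bounded by $R+|Du|$. I expect to close this using bijectivity and the $o(t)$ bound.

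\textbf{Case $0<\zeta<1$.} Here the remainder can be large, so I would instead use the quadratic upper bound on $u$. By the gradient estimate mentioned after Theorem~\ref{D2uehd} (\cite{AB2021}), it gives $|Du|\le C|x|$. Then $|\tilde x|\le C|x|$, so $|x|\ge c|\tilde x|$. On the other hand, along the kernel direction,
\[
|x|=O(|\tilde x|^{\zeta})=o(|\tilde x|),
\]
which is a contradiction. This same linear-growth argument also settles the case $\zeta\le0$, provided $|Du|\le C|x|$ is available there, which follows from the $O_1$ expansion.
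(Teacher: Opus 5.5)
Your items (i), (ii), and the case $0<\zeta<1$ of (iii) follow the paper's argument. The linear gradient bound from \cite{AB2021} gives $|\tilde x|\le C(1+|x|)$, while a kernel direction of $\mathfrak cI-\mathfrak s\tilde A$ forces $|x|$ to grow only like $|\tilde x|^{\zeta}$. You run the contradiction along the ray $\tilde x=te$ with $e$ in the kernel; the paper runs it along $x=Re_1$ after diagonalizing $\tilde A$. These are equivalent.

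The gap is the case $\zeta\le 0$, which the paper settles by citing \cite{LLY2020}. You leave it open (``I expect to close this''), and the shortcut in your last sentence is circular. The expansion together with $|x|^2+|Du|^2=|\tilde x|^2+|D_{\tilde x}\tilde u|^2$ only yields $|Du(x)|\le C|\tilde x|$. Upgrading this to $|Du(x)|\le C|x|$ needs $|\tilde x|\le C|x|$, which is exactly what a kernel vector would violate.

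Lipschitz continuity of the inverse map does not help either: it bounds $|x|$ from above by $|\tilde x|$, whereas you need bounded $x$ to force bounded $\tilde x$. That is precisely the property you proved in (i), and it closes the case in one line. If $(\mathfrak cI-\mathfrak s\tilde A)e=0$ and $\zeta\le0$, then
\[
x(te)=\mathfrak c\,te-\mathfrak s D_{\tilde x}\tilde u(te)=(\mathfrak cI-\mathfrak s\tilde A)te+O(t^{\zeta})=O(1).
\]
So the points $x(te)$ stay in a fixed ball while their images $\tilde x=te$ are unbounded. This contradicts continuity of $x\mapsto\mathfrak cx+\mathfrak sDu(x)$.

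Also fix a slip in the preceding paragraph. To deduce $D^2u(x)\to A$ as $|x|\to\infty$ you need $|x|\to\infty\Rightarrow|\tilde x|\to\infty$, i.e. $|\tilde x|\ge c|x|-C$. This is what the bounded differential of the inverse map (or your monotonicity estimate in (i)) gives. The inequality $|x|\ge c|\tilde x|-C$ that you wrote is the other direction and does not follow from that bound.
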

\begin{proof}
	Assertion (i) follows immediately from the fact that the map $x\mapsto \tilde x$ is a diffeomorphism from $\mathbb R^n$ onto itself; see \cite[Section 3.1]{LLY2020}.
	
	Assertion (ii) is obtained by a direct computation. Indeed, recalling
	\[
	h(\tilde x)=\tilde f(\tilde x,D_{\tilde x}\tilde u(\tilde x))=f(x),
	\]
	we differentiate $h$ with respect to $\tilde x$ and use the chain rule together with the inverse Lewy rotation to obtain
	\[
	|D_{\tilde x}h(\tilde x)|\le C(\delta)\,|D_xf(x)|.
	\]

We now prove (iii). Assume that there exists a constant symmetric matrix
\(\tilde A\) satisfying \(F(\tilde A)=\tilde\theta\) and
\[
D_{\tilde x}\tilde u(\tilde x)
=
\tilde A\tilde x+O_1(|\tilde x|^\zeta)
\qquad \text{as }|\tilde x|\to+\infty,
\]
for some \(\zeta<1\). If $\zeta\leq 0$, (iii) was proved by Li--Li--Yuan \cite{LLY2020}. Hence, we only consider the case $0<\zeta<1$. By the meaning of the \(O_1\)-notation, we have
\[
D_{\tilde x}^2\tilde u(\tilde x)
=
\tilde A+O(|\tilde x|^{\zeta-1})
\qquad \text{as }|\tilde x|\to+\infty.
\]
In particular,
\[
D_{\tilde x}^2\tilde u(\tilde x)\to \tilde A
\qquad \text{as }|\tilde x|\to+\infty.
\]

We first prove that
\[
\lambda_i(\tilde A)<\cot\vartheta,
\qquad i=1,\ldots,n.
\]
Since the Lewy rotation gives
\[
|D_{\tilde x}^2\tilde u|<\cot\vartheta
\qquad \text{in }\mathbb R^n\setminus\widetilde\Omega,
\]
passing to the limit yields
\[
\lambda_i(\tilde A)\le \cot\vartheta,
\qquad i=1,\ldots,n.
\]
It remains to exclude equality.

Since $u$ has a quadratic upper growth bound at infinity and the Hessian of $u$ is bounded from below, the interior gradient estimate for the supercritical Lagrangian mean
curvature equation \cite{AB2021} yields
\begin{equation}\label{eq:linear_gradient_growth}
	|Du(x)|\le C|x|
	\qquad \text{as } |x|\to+\infty.
\end{equation}

Now suppose, by contradiction, that equality occurs. After an orthogonal rotation in
the \(\tilde x\)-variables, we may assume that \(\tilde A\) is diagonal and
\[
\tilde A_{11}=\cot\vartheta.
\]
Then the asymptotic expansion of \(D_{\tilde x}\tilde u\) gives
\[
\partial_1\tilde u(\tilde x)
=
\cot\vartheta\,\tilde x_1
+
O(|\tilde x|^\zeta)
\qquad \text{as }|\tilde x|\to+\infty.
\]
Using the inverse Lewy rotation formula
\[
x=\mathfrak c\tilde x-\mathfrak sD_{\tilde x}\tilde u(\tilde x),
\]
we obtain
\[
\begin{aligned}
	x_1
	&=
	\mathfrak c\tilde x_1
	-
	\mathfrak s\,\partial_1\tilde u(\tilde x) \\
	&=
	\mathfrak c\tilde x_1
	-
	\mathfrak s
	\left(
	\cot\vartheta\,\tilde x_1
	+
	O(|\tilde x|^\zeta)
	\right).
\end{aligned}
\]
Since
\[
\mathfrak c-\mathfrak s\cot\vartheta=0,
\]
we obtain
\begin{equation}\label{eq:sublinear_strip}
	|x_1|\le C|\tilde x|^\zeta.
\end{equation}

On the other hand, the Lewy rotation is an orthogonal rotation in
\(\mathbb R^n\times\mathbb R^n\). Hence
\[
|\tilde x|^2+|D_{\tilde x}\tilde u(\tilde x)|^2
=
|x|^2+|Du(x)|^2.
\]
Using \eqref{eq:linear_gradient_growth}, we obtain
\[
|\tilde x|
\le
\left(|x|^2+|Du(x)|^2\right)^{1/2}
\le
C(1+|x|).
\]
Together with \eqref{eq:sublinear_strip}, this gives
\[
|x_1|
\le
C(1+|x|)^\zeta
\qquad \text{for } |x|\gg1.
\]
Now take \(x=Re_1\). Since \(\Omega\) is bounded, \(Re_1\in\mathbb R^n\setminus\Omega\)
for all sufficiently large \(R\). Therefore
\[
R=|x_1|
\le
C(1+R)^\zeta.
\]
This is impossible as \(R\to+\infty\), because \(\zeta<1\). Hence equality cannot
occur, and consequently
\[
\lambda_i(\tilde A)<\cot\vartheta,
\qquad i=1,\ldots,n.
\]

It follows that the matrix
\[
\mathfrak c I-\mathfrak s\tilde A
\]
is invertible. Moreover, from
\[
x
=
\mathfrak c\tilde x-\mathfrak sD_{\tilde x}\tilde u(\tilde x)
\]
and the expansion of \(D_{\tilde x}\tilde u\), we obtain
\[
x
=
(\mathfrak cI-\mathfrak s\tilde A)\tilde x
+
O(|\tilde x|^\zeta).
\]
Since \(\mathfrak cI-\mathfrak s\tilde A\) is invertible and \(\zeta<1\), we have
\[
|x|\to+\infty
\qquad \Longleftrightarrow \qquad
|\tilde x|\to+\infty.
\]

Finally, the Hessian transformation formula under the inverse Lewy rotation is
\[
D_x^2u(x)
=
\left(
\mathfrak sI+\mathfrak cD_{\tilde x}^2\tilde u(\tilde x)
\right)
\left(
\mathfrak cI-\mathfrak sD_{\tilde x}^2\tilde u(\tilde x)
\right)^{-1}.
\]
Letting \(|\tilde x|\to+\infty\), and using
\[
D_{\tilde x}^2\tilde u(\tilde x)\to\tilde A,
\]
we conclude that
\[
D_x^2u(x)\to A
\qquad \text{as } |x|\to+\infty,
\]
where
\[
A=
\left(\mathfrak{s}I+\mathfrak{c}\tilde A\right)
\left(\mathfrak{c}I-\mathfrak{s}\tilde A\right)^{-1}.
\]
This proves (iii).
\end{proof}

	In view of the decay condition~\eqref{Ducond} and equation~\eqref{npeq}, we obtain the following lemma.
	
	\begin{lem}\label{abf11}
		Suppose $f$ satisfies \eqref{Ducond}. If the potential function $\tilde{u}$ satisfies \eqref{rtfm} and \eqref{npeq}, then
		\begin{equation}
			\limsup_{|\tilde{x}| \to +\infty} \left(|\tilde{x}|^{\beta} |h(\tilde{x})| + |\tilde{x}|^{\beta+1} [h]_{C^{0,1}(\overline{B_{|\tilde{x}|/2}(\tilde{x})})} \right) < \infty.
		\end{equation}
	\end{lem}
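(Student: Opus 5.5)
The plan is to transfer the decay of $f$ in $x$-variables into decay of $h$ in $\tilde x$-variables. The key tool is the isometry identity of the Lewy rotation,
\[
|\tilde x|^2+|D_{\tilde x}\tilde u(\tilde x)|^2=|x|^2+|Du(x)|^2 ,
\]
together with the Lipschitz bound of the inverse map $\tilde x\mapsto x$. Since $\mathfrak c\le 1$ and $\tilde x=\mathfrak c x+\mathfrak s Du(x)$, we have $|\tilde x|\le |x|+|Du(x)|$. By \eqref{Ducond},
\[
|h(\tilde x)|=|f(x)|\le C(|x|+|Du(x)|)^{-\beta}\le C|\tilde x|^{-\beta}
\]
for $|x|$ large. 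By Proposition~\ref{rtp}(i), $|\tilde x|\to\infty$ forces $|x|\to\infty$, so this bound holds for all large $|\tilde x|$. This gives the first term.

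For the Lipschitz seminorm, I would use that the inverse rotation $x=\mathfrak c\tilde x-\mathfrak s D_{\tilde x}\tilde u(\tilde x)$ has Jacobian $\mathfrak cI-\mathfrak sD^2_{\tilde x}\tilde u$. Its norm is at most $\mathfrak c+\mathfrak s\cot\vartheta\le C(\delta)$ because $|D^2_{\tilde x}\tilde u|<\cot\vartheta$ by \eqref{npeq}. Hence $\tilde x\mapsto x$ is globally Lipschitz with constant $L=C(\delta)$. Write $B:=\overline{B_{|\tilde x|/2}(\tilde x)}$. For $\tilde z,\tilde w\in B$ with images $z,w$ we get $|h(\tilde z)-h(\tilde w)|=|f(z)-f(w)|$. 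The remaining task is to show that $z,w$ lie in a region where \eqref{Ducond} controls the Lipschitz constant of $f$ by $C(|x|+|Du(x)|)^{-\beta-1}$.

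This localization is the main obstacle. The image of $B$ has diameter at most $L|\tilde x|$, which need not lie inside $B_{|x|/2}(x)$. I would handle it by a chaining argument. First I would cover the segment $[\tilde z,\tilde w]\subset B$ by about $N=N(\delta)$ subsegments of length $\le \epsilon|\tilde x|$, with $\epsilon$ small depending on $\delta$. I would then show that each subsegment maps into a ball $B_{|y|/2}(y)$ with $y$ the image of one of its points. It suffices to show $|y|\ge c|\tilde x|$ for points in $B$, since then a piece of length $\epsilon|\tilde x|$ has image of diameter at most $L\epsilon|\tilde x|<|y|/2$. To get this lower bound, I would use the forward map, which satisfies $|\tilde y|\le |y|+|Du(y)|$. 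The gradient is controlled in terms of $|y|$: either by the linear gradient growth \eqref{eq:linear_gradient_growth}, or, when the $Du$-term is present, by working directly with the weight $|y|+|Du(y)|\ge |\tilde y|\ge |\tilde x|/2$. In fact, \eqref{Ducond} is phrased with the weight $|y|+|Du(y)|$, so I would use the ball $B_{|y|/2}(y)$ and bound its weight from below by $|\tilde y|\ge|\tilde x|/2$. The only place the smallness of the pieces is needed is to guarantee that they fit inside those balls, which requires $|y|\gtrsim|\tilde x|$. Where this fails, i.e.\ $|y|\ll|\tilde y|$ so that $|Du(y)|$ dominates, I would instead refine the chain into pieces of length comparable to $|y|/L$. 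The Lipschitz constant there is $\le C(|y|+|Du(y)|)^{-\beta-1}\le C|\tilde x|^{-\beta-1}$. The number of pieces must then be controlled, and I expect this to be the delicate point. In the case without the $Du$-term ($0<\beta\le 1$), the quadratic bound and \cite{AB2021} make $|Du|\le C|x|$, so $|y|\simeq|\tilde y|$ and the chain is bounded directly.

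Summing over the pieces gives
\[
|h(\tilde z)-h(\tilde w)|\le \sum_k C|\tilde x|^{-\beta-1}L\,|\tilde z_k-\tilde z_{k+1}|\le C|\tilde x|^{-\beta-1}|\tilde z-\tilde w| .
\]
Alternatively, I would use Proposition~\ref{rtp}(ii) pointwise: $|D_{\tilde x}h|\le C(\delta)|D_xf|$ a.e. (since $f$ is Lipschitz, this holds by Rademacher), and the pointwise bound $|D_xf(y)|\le C(|y|+|Du(y)|)^{-\beta-1}\le C|\tilde y|^{-\beta-1}$ follows from \eqref{Ducond}. Integrating along the convex ball $B$, where $|\tilde y|\ge|\tilde x|/2$, then yields the seminorm bound without chaining. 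This pointwise route is the one I would try first.
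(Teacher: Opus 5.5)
Your proof is correct. The bound on $|h|$ is the same as in the paper: both use $|\tilde x|\le |x|+|Du(x)|$ together with Proposition~\ref{rtp}(i). For the Lipschitz seminorm you take a different, pointwise route.

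The paper argues at the level of balls. It uses $|\tilde x-\tilde y|\ge \sin\vartheta\,|x-y|$ to bound $\sup_{\tilde y}|h(\tilde x)-h(\tilde y)|/|\tilde x-\tilde y|$ by $C\,[f]_{C^{0,1}(\overline{B_{|x|/2}(x)})}$. It then applies \eqref{Ducond} with the single weight $(|x|+|Du(x)|)^{\beta+1}$ taken at the centre. That comparison has two weaknesses:
\begin{itemize}
\item It tacitly requires the preimage of $\overline{B_{|\tilde x|/2}(\tilde x)}$ to lie in $\overline{B_{|x|/2}(x)}$. This is exactly the localization you flagged, and it is not justified in general, for instance when $|Du(x)|\gg |x|$ so that $|\tilde x|\gg |x|$.
\item It only tests pairs with one endpoint at the centre, so it does not bound the full seminorm.
\end{itemize}

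Your pointwise version avoids both issues. At every point of differentiability,
$|D_yf(y)|\le [f]_{C^{0,1}(B_{|y|/2}(y))}\le C(|y|+|Du(y)|)^{-\beta-1}\le C|\tilde y|^{-\beta-1}$. The Jacobian $\mathfrak cI-\mathfrak sD^2_{\tilde x}\tilde u$ of the inverse map has norm at most $2$. Finally, $|\tilde y|\ge |\tilde x|/2$ on the convex ball. So \eqref{Ducond} is used only infinitesimally at each point, and every pair in the ball is covered. Your route is cleaner and closes a step the paper leaves implicit. The chaining paragraph, whose piece count you leave uncontrolled, can simply be dropped.

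In the write-up, record the routine justifications:
\begin{itemize}
\item $h$ is locally Lipschitz and the chain rule holds a.e., because the Lewy map and its inverse are $C^1$ diffeomorphisms and therefore preserve null sets.
\item An a.e.\ gradient bound on a convex open set yields the Lipschitz bound. Apply it on a slightly larger ball, e.g.\ $B_{0.51|\tilde x|}(\tilde x)$, and pass to the closed ball by continuity.
\item For $|\tilde x|$ large, every preimage $y$ of a point of the ball satisfies $|y|\ge R_1$, so \eqref{Ducond} applies there. This holds because the forward map sends the compact set $\{|y|\le R_1\}$ to a bounded set.
\end{itemize}
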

	\begin{proof}
		In view of \eqref{Ducond}, there exist $R_1 \ge R_0$ and a constant $C_1$ such that
		\[
		|x|^{\beta} |f(x)| + |x|^{\beta+1} [f]_{C^{0,1}(\overline{B_{|x|/2}(x)})} \le C_1, \quad \forall |x| \ge R_1.
		\]
		Hence, by \eqref{Ducond}, \eqref{rtfm} and Proposition~\ref{rtp}(i), there exists $R_2$ sufficiently large such that for all $|\tilde{x}| \ge R_2$, we have $|x| \ge R_1$ and
		\[
		|h(\tilde{x})|\,|\tilde{x}|^{\beta} \le C(\beta,\delta)\bigl(|x|^{\beta} + |Du(x)|^{\beta}\bigr) |f(x)| \le C(\delta,\beta,C_0,C_1).
		\]
		
		Similarly, using \eqref{Ducond}, \eqref{rtfm} and Proposition~\ref{rtp}(ii), there exists $R_2$ sufficiently large such that for $|\tilde{x}| \ge R_2$,
		\begin{align*}
			[h]_{C^{0,1}(\overline{B_{|\tilde{x}|/2}(\tilde{x})})} |\tilde{x}|^{\beta+1}
			&\le C(\beta,\delta) \bigl(|x|^{\beta+1} + |Du(x)|^{\beta+1}\bigr) [h]_{C^{0,1}(\overline{B_{|\tilde{x}|/2}(\tilde{x})})} \\
			&= C(\delta,\beta) \bigl(|x|^{\beta+1} + |Du(x)|^{\beta+1}\bigr) \sup_{\tilde{y} \in \overline{B_{|\tilde{x}|/2}(\tilde{x})}} \frac{|h(\tilde{x}) - h(\tilde{y})|}{|\tilde{x} - \tilde{y}|} \\
			&\le C(\delta,\beta) \bigl(|x|^{\beta+1} + |Du(x)|^{\beta+1}\bigr) \sup_{y \in \overline{B_{|x|/2}(x)}} \frac{|f(x) - f(y)|}{|x - y|} \\
			&\le C(\delta,\beta,C_0,C_1),
		\end{align*}
		where the second inequality follows from the fact that the map $x \mapsto \tilde{x}$ is a diffeomorphism and satisfies $|\tilde{x} - \tilde{y}| \ge \sin\vartheta \, |x - y|$; for further details, see \cite{LLY2020}. This completes the proof.
	\end{proof}

\subsection{The scale-dependent Hessian convergence mechanism}\label{subsec:hessian_quantitative}
In this subsection we work after the Lewy rotation and the concavity modification described above. For simplicity, we still denote the rotated variables and the rotated potential by \(x\) and \(u\). 

Let $n\geq 3$ and let $u$ be a viscosity solution of
\begin{equation}\label{eq:main}
    F(D^2u)=\theta+f(x)
    \qquad \text{in } \mathbb{R}^n\setminus\overline{\Omega}.
\end{equation}
We assume that the operator $F$ is concave and uniformly elliptic along $D^2u$. More precisely, assume
\begin{equation}\label{eq:hessian_bound}
    \|D^2u\|_{L^\infty(\R^n\setminus\Omega)}\leq M,
\end{equation}
and that there exist $0<\lambda\leq \Lambda<\infty$ such that
\begin{equation}\label{eq:uniform_ellipticity}
    \lambda I\leq \bigl(F^{ij}(D^2u(x))\bigr)\leq \Lambda I
    \qquad \text{in}\ \mathbb{R}^n\setminus\overline{\Omega}.
\end{equation}
Assume further that \(f\in C^{0,1}_{\mathrm{loc}}\) and satisfies \eqref{Ducond}.

The following result is the higher-dimensional ($n\geq 3$) replacement for the exterior Hessian convergence argument of Li--Li--Yuan~\cite{LLY2020}. The key point is that we use a scale-dependent second difference quotient instead of differentiating the equation twice.

\begin{thm}[Quantitative Hessian convergence]\label{prop:quantitative_hessian}
Under the assumptions above, there exist a matrix $A\in\mathcal A$, a number
$\varepsilon_0>0$, and a constant $C>0$ such that
\[
    D^2u(x)\to A
    \qquad \text{as } |x|\to\infty,
\]
and
\begin{equation}\label{eq:quantitative_convergence}
    |D^2u(x)-A|\leq C|x|^{-\varepsilon_0}
    \qquad \text{for all sufficiently large } |x|.
\end{equation}
More precisely, let $\alpha\in(0,1)$ be the Evans--Krylov exponent. Choose
\[
    0<\sigma<\min\{\beta,1\}
\]
and
\[
    0<\tau<\min\{\sigma\alpha,\beta-\sigma\}.
\]
Then one can take some
\[
    0<\varepsilon_0<
    \min\left\{
    \tau,\,
    -\frac{\log\eta}{\log L}
    \right\},
\]
where $L>2$ and $\eta\in(0,1)$ are the constants in the exterior linear decay lemma
used below.
\end{thm}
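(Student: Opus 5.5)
Throughout, I write $\alpha$ for the Evans--Krylov exponent, as in the statement. The approach is a rescaling and decay argument on dyadic annuli. For $R$ large, set
\[
u_R(y)=R^{-2}u(Ry),\qquad y\in B_2\setminus B_{1/2}.
\]
Then $u_R$ solves $F(D^2u_R)=\theta+f_R$ with $f_R(y)=f(Ry)$, and \eqref{Ducond} gives
\[
|f_R|\le CR^{-\beta},\qquad [f_R]_{C^{0,1}}\le CR^{-\beta}.
\]
By \eqref{eq:hessian_bound}--\eqref{eq:uniform_ellipticity}, the concavity of $F$, and the Evans--Krylov estimate with Lipschitz (hence H\"older) right-hand side (Caffarelli's perturbation theory), we get
\[
\|D^2u_R\|_{C^{0,\alpha}(B_{3/2}\setminus B_{3/4})}\le C,
\]
uniformly in $R$. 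This gives a first Hölder bound $[D^2u]_{C^{0,\alpha}(B_{|x|/2}(x))}\le C|x|^{-\alpha}$.

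\textbf{Step 1: scale-dependent second difference quotients.} Fix a step size $h=|x|^{\sigma}$ with $\sigma<\min\{\beta,1\}$, so that $h\ll|x|$. For a unit vector $e$, consider
\[
\Delta^2_{h,e}u(x)=\frac{u(x+he)+u(x-he)-2u(x)}{h^2}.
\]
By concavity of $F$, this quantity is a subsolution of the linearized equation $a^{ij}D_{ij}w\ge \Delta^2_{h,e}f$, where $a^{ij}=\int_0^1F^{ij}(\cdots)$ is uniformly elliptic. The key point is that we never differentiate $f$ twice: $\Delta^2_{h,e}f$ is bounded by $2\operatorname{osc}f/h^2\lesssim |x|^{-\beta-2\sigma}$, or alternatively by the Lipschitz bound $h^{-1}[f]_{C^{0,1}}\lesssim|x|^{-\beta-1-\sigma}$. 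The first difference $(u(x+he)-u(x))/h$ also solves a linear equation with right-hand side of size $|x|^{-\beta-1}$. The discrepancy between $\Delta^2_{h,e}u$ and $D_{ee}u$ is controlled by the Hölder bound from the first paragraph:
\[
\bigl|\Delta^2_{h,e}u(x)-D_{ee}u(x)\bigr|\le C h^{\alpha}|x|^{-\alpha}=C|x|^{-(1-\sigma)\alpha}.
\]
Balancing these errors is what produces the constraint $\tau<\min\{\sigma\alpha,\beta-\sigma\}$.

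\textbf{Step 2: exterior linear decay lemma.} I would next state and prove the decay lemma the statement refers to. There exist $L>2$ and $\eta\in(0,1)$ such that
\[
\operatorname{osc}_{\{|x|\ge L^{k+1}\}}D^2u\le \eta\,\operatorname{osc}_{\{|x|\ge L^k\}}D^2u+CL^{-k\tau}.
\]
The proof I plan is a Krylov--Safonov type argument applied to $\pm\Delta^2_{h,e}u$ and to $D_{ee}u$ in all directions $e$. On the exterior of a ball, $\sup_e D_{ee}u$ minus its limsup is a subsolution. The weak Harnack inequality on annuli, combined with concavity (as in Li--Li--Yuan, where the bound $\sum_e$ of pure second derivatives forces two-sided control through the equation), reduces oscillation by a fixed factor from one annulus to the next, up to the $f$-error from Step 1. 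For $n\ge3$ the fundamental solution $|x|^{2-n}$ decays at infinity, so an exterior maximum principle with barriers of the form $|x|^{2-n}$ is available.

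\textbf{Step 3: iteration.} Iterating the decay lemma gives
\[
\operatorname{osc}_{\{|x|\ge L^k\}}D^2u\le C\bigl(\eta^k+L^{-k\tau}\bigr)\le CL^{-k\varepsilon_0},
\]
whenever $\varepsilon_0<\min\{\tau,-\log\eta/\log L\}$. It follows that $D^2u$ is Cauchy at infinity, so the limit $A$ exists and satisfies \eqref{eq:quantitative_convergence}. Passing to the limit in the equation with $f\to0$ gives $F(A)=\theta$, that is, $A\in\mathcal A$.

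I expect Step 2 to be the main obstacle. The difficulty is getting two-sided oscillation decay of the full Hessian from one-sided (subsolution) information on second differences. I would handle this using concavity plus the equation $F(D^2u)=\theta+f$: an upper bound on every eigenvalue near the top, combined with the equation, pins down the lower eigenvalues, up to an error of size $|f|$.
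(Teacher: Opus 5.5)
\textbf{There is a genuine gap in Step~2.} That step carries the whole content of the theorem, and the mechanism you propose for it does not work.

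\emph{The Evans--Krylov argument is single-scale.} Weak Harnack for $M_k-u_{\gamma_k\gamma_k}$ on one annulus, combined with the concavity coupling, controls the oscillation only through $\sum_k\bigl(M_k-\sup_{A'}u_{\gamma_k\gamma_k}\bigr)$ for that one annulus $A'$. To close an estimate for $\operatorname{osc}_{\{|x|\ge L^{j+1}\}}D^2u$ you need a single annulus on which all pure second derivatives are simultaneously near their suprema over that tail. Over an infinite tail these suprema can be realized on different annuli for different directions, or only as $|x|\to\infty$. Weak Harnack on one annulus says nothing about the others.

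\emph{Transience at infinity is not available here.} Passing to infinity would require a transience estimate for supersolutions in the exterior. That fails for merely measurable coefficients $a^{ij}=F^{ij}(D^2u(x))$. In particular, the barrier $|x|^{2-n}$ you invoke is not a supersolution in general. For $a^{ij}=\lambda\delta_{ij}+(\Lambda-\lambda)x_ix_j/|x|^2$ one computes $a^{ij}\partial_{ij}|x|^{2-n}=(n-1)(n-2)(\Lambda-\lambda)|x|^{-n}>0$. Such barriers become usable only once the coefficients are close to a constant matrix, that is, after you already know $D^2u\to A$. In your plan $A$ is produced in Step~3 from Step~2, so the argument is circular. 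The concavity ``pinning'' of the lower eigenvalues gives two-sided control at each point, but it does not address either obstruction.

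\textbf{How the paper supplies the missing ingredients.} It first proves qualitative convergence (Lemma~\ref{lem:qualitative_hessian}) by a $\limsup/\liminf$ contradiction. The quotient $w_r$ with $h_r=r^{1-\sigma}$ is a subsolution with forcing $O(r^{\sigma-\beta-2})$. A chain of annular comparisons plus the maximum principle shows that $u_{ee}$ comes within $\varepsilon$ of $\overline\omega$ on the same sphere $\partial B_{r_k}$ where it is within $\varepsilon$ of $\underline\omega$. Weak Harnack on one fixed-shape annulus around that sphere then gives the contradiction.

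Only after this does the paper obtain a rate, and it does so by linearization rather than Krylov--Safonov. It measures the excess relative to the level set $\Sigma=\mathcal A$ and proves $\Psi(LR)\le\eta\Psi(R)+CR^{-\tau}$ (Lemma~\ref{lem:one_step_annular_improvement}) by compactness. Blow-downs of a failing sequence converge to a solution of $F^{ij}(A)v_{ij}=0$ in $\mathbb R^n\setminus B_1$ with bounded Hessian. For such solutions the spherical-harmonic expansion gives the decay factor $CL^{-n}$ (Lemma~\ref{lem:linear_decay}), and this is where the $L$ and $\eta$ of the statement come from.

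\textbf{A separate error in Step~1.} Your step size is inverted. With $h=|x|^\sigma$ the approximation error is $|x|^{-(1-\sigma)\alpha}$, and the scale-invariant forcing is $|x|^2h^{-1}|x|^{-\beta-1}=|x|^{1-\sigma-\beta}$. This does not decay when $\beta\le 1-\sigma$, for instance for every $\beta<1/2$ under your constraint $\sigma<\beta$. You need $h=|x|^{1-\sigma}$ to get the errors $|x|^{-\sigma\alpha}$ and $|x|^{\sigma-\beta}$, which produce $\tau<\min\{\sigma\alpha,\beta-\sigma\}$.
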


\subsection{Scale-dependent second difference quotients}

Fix $e\in\partial B_1$ and write
\[
    \omega(x)=u_{ee}(x).
\]
Let
\[
    0<\sigma<\min\{\beta,1\}.
\]
For a large scale $R$, define
\[
    h_R=R^{1-\sigma}.
\]
Then
\[
    h_R\to\infty,
    \qquad
    \frac{h_R}{R}=R^{-\sigma}\to0.
\]
For $|x|\sim R$, define the second difference quotient
\begin{equation}\label{eq:wR_definition}
    w_R(x)
    =
    \frac{u(x+h_Re)+u(x-h_Re)-2u(x)}{h_R^2}.
\end{equation}

Step 1. Approximation of $u_{ee}$

By the interior Hessian estimate from \cite{Z2025} and the Evans--Krylov estimate applied after rescaling on balls
of radius comparable to $R$, there exists $\alpha\in(0,1)$ such that
\begin{equation}\label{eq:large_scale_holder}
    [D^2u]_{C^\alpha(B_{cR}(z))}
    \leq C R^{-\alpha}
    \qquad \text{whenever } |z|\sim R.
\end{equation}
Since $h_R/R\to0$, for large $R$ we have $B_{h_R}(x)\subset B_{cR}(x)$ whenever $|x|\sim R$.
Using the integral form of the one-dimensional second difference in the direction $e$, we obtain
\begin{equation}\label{eq:wR_approximation}
    |w_R(x)-u_{ee}(x)|
    \leq
    C h_R^\alpha [D^2u]_{C^\alpha(B_{h_R}(x))}
    \leq
    C\left(\frac{h_R}{R}\right)^\alpha
    =
    C R^{-\sigma\alpha}.
\end{equation}

Step 2. Linear inequality for $w_R$

Set
\[
    a^{ij}(x)=F^{ij}(D^2u(x)),
    \qquad
    L\phi=a^{ij}(x)\phi_{ij}.
\]
By the concavity of $F$,
\[
\begin{aligned}
    F(D^2u(x+h_Re))-F(D^2u(x))
    &\leq
    F^{ij}(D^2u(x))
    \bigl(u_{ij}(x+h_Re)-u_{ij}(x)\bigr),\\
    F(D^2u(x-h_Re))-F(D^2u(x))
    &\leq
    F^{ij}(D^2u(x))
    \bigl(u_{ij}(x-h_Re)-u_{ij}(x)\bigr).
\end{aligned}
\]
Adding these two inequalities and dividing by $h_R^2$ gives
\begin{equation}\label{eq:wR_subsolution}
    Lw_R(x)\geq g_R(x),
\end{equation}
where
\begin{equation}\label{eq:gR_definition}
    g_R(x)
    =
    \frac{f(x+h_Re)+f(x-h_Re)-2f(x)}{h_R^2}.
\end{equation}
For $|x|\sim R$ and $R$ large enough, $x\pm h_Re\in B_{|x|/2}(x)$. Therefore, by
\eqref{Ducond},
\[
\begin{aligned}
    |g_R(x)|
    &\leq
    \frac{|f(x+h_Re)-f(x)|+|f(x-h_Re)-f(x)|}{h_R^2} \\
    &\leq
    C h_R^{-1} R^{-\beta-1}
    =
    C R^{-\beta-2+\sigma}.
\end{aligned}
\]
Thus
\begin{equation}\label{eq:gR_decay}
    |g_R(x)|\leq C R^{-\beta-2+\sigma}
    \qquad \text{for } |x|\sim R.
\end{equation}
Equivalently,
\begin{equation}\label{eq:scale_invariant_error}
    R^2\|g_R\|_{L^\infty(\{|x|\sim R\})}
    \leq C R^{\sigma-\beta}.
\end{equation}

Step 3. Local comparison estimate

Let
\[
    D_{\rho_1,\rho_2}=B_{\rho_2}\setminus \overline{B_{\rho_1}},
\]
where $\rho_2/\rho_1$ is bounded above and below by universal constants greater than $1$.
Let $R$ be comparable to $\rho_1$ and $\rho_2$. Suppose that
\[
    \omega\leq K
    \qquad \text{on } \partial D_{\rho_1,\rho_2}.
\]
Then
\begin{equation}\label{eq:local_comparison}
    \omega\leq
    K+C\left(R^{-\sigma\alpha}+R^{\sigma-\beta}\right)
    \qquad \text{in }D_{\rho_1,\rho_2}.
\end{equation}

Indeed, by \eqref{eq:wR_approximation},
\[
    w_R\leq K+C R^{-\sigma\alpha}
    \qquad \text{on } \partial D_{\rho_1,\rho_2}.
\]
Let $\psi_R$ solve
\[
    \begin{cases}
    L\psi_R=-|g_R|,& x\in D_{\rho_1,\rho_2},\\
    \psi_R=0,& x\in \partial D_{\rho_1,\rho_2}.
    \end{cases}
\]
By the maximum principle, $\psi_R\geq0$. Since the annulus has a fixed shape, the Green
function estimate for uniformly elliptic operators gives
\[
    0\leq \psi_R\leq C R^2\|g_R\|_{L^\infty(D_{\rho_1,\rho_2})}.
\]
Using \eqref{eq:gR_decay},
\[
    0\leq \psi_R\leq C R^{\sigma-\beta}.
\]
Now set
\[
    q=w_R-\bigl(K+C R^{-\sigma\alpha}\bigr)-\psi_R.
\]
Then $q\leq0$ on $\partial D_{\rho_1,\rho_2}$, while
\[
    Lq=Lw_R-L\psi_R\geq g_R+|g_R|\geq0.
\]
The maximum principle implies $q\leq0$ in $D_{\rho_1,\rho_2}$. Hence
\[
    w_R\leq K+C\left(R^{-\sigma\alpha}+R^{\sigma-\beta}\right).
\]
Using \eqref{eq:wR_approximation} once more yields \eqref{eq:local_comparison}.

\subsection{Qualitative Hessian convergence in higher dimensions}

We recall the qualitative argument, since it is the input for the quantitative tail Campanato estimate.

\begin{lem}[Hessian convergence]\label{lem:qualitative_hessian}
There exists $A\in\mathcal A$ such that
\[
    D^2u(x)\to A
    \qquad \text{as } |x|\to\infty.
\]
\end{lem}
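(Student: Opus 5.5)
I will follow the Li--Li--Yuan scheme for Hessian convergence: first get a one-sided oscillation decay for each pure second derivative $\omega=u_{ee}$ on dyadic annuli, then combine over directions to obtain a limit matrix. Concretely, for fixed $e\in\partial B_1$ set
\[
M_e(\rho)=\sup_{|x|\ge\rho}u_{ee}(x),\qquad m_e(\rho)=\liminf_{|x|\to\infty}u_{ee}(x),
\]
both finite by \eqref{eq:hessian_bound}. Since $M_e$ is nonincreasing, $\overline{\omega}_e:=\lim_{\rho\to\infty}M_e(\rho)$ exists. The goal is to show $\limsup_{|x|\to\infty}u_{ee}=\liminf_{|x|\to\infty}u_{ee}$ for every $e$. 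Polarization ($u_{ij}$ is recovered from $u_{ee}$ with $e=e_i,e_j,(e_i\pm e_j)/\sqrt2$) then gives a limit matrix $A$. Letting $|x|\to\infty$ in \eqref{eq:main} and using $f\to0$ together with continuity of $F$ yields $F(A)=\theta$, i.e.\ $A\in\mathcal A$.

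The upper side comes from the subsolution property \eqref{eq:wR_subsolution}. The local comparison \eqref{eq:local_comparison} shows that a maximum of $u_{ee}$ on a large annulus exceeds its boundary values by at most $o(1)$. That alone is not enough; I need a quantitative ``drop'' statement. Suppose, by contradiction, that $\liminf u_{ee}<\overline\omega_e-2\kappa$ for some $\kappa>0$. Rescale by $u_R(y)=R^{-2}u(Ry)$ on the annulus $\{1/2<|y|<4\}$. The rescaled $w_R$ is a subsolution of a uniformly elliptic linear equation with error $R^2|g_R|\le CR^{\sigma-\beta}\to0$, and it lies below $\overline\omega_e+o(1)$. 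Applying the weak Harnack inequality to the nonnegative supersolution $\overline\omega_e+o(1)-w_R$ gives the following: if $w_R$ is at least $\kappa$ below $\overline\omega_e$ on a set of positive measure in one annulus, then it is at least $c\kappa$ below in the adjacent annulus. The same holds for $u_{ee}$ by \eqref{eq:wR_approximation}.

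The main obstacle is propagating this to all large $|x|$, since the bad set may occur only at sparse scales. Here I will use the Lipschitz bound on $Du$ and the Hessian bound. Let $z_R$ be a point with $|z_R|\sim R$ and $u_{ee}(z_R)<\overline\omega_e-2\kappa$. By \eqref{eq:large_scale_holder} the same inequality, with $\kappa$ in place of $2\kappa$, holds on $B_{cR}(z_R)$, so the set has measure comparable to that of the annulus. Weak Harnack then pushes $M_e$ down at scale $R$ by a fixed fraction of $\kappa$. The comparison \eqref{eq:local_comparison} then transports this upper bound to all larger annuli, up to a summable error, because $R^{-\sigma\alpha}+R^{\sigma-\beta}$ summed over dyadic $R$ converges. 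This contradicts the definition of $\overline\omega_e$. Hence every such $\kappa$ is excluded, and $u_{ee}$ has a limit as $|x|\to\infty$.

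One point needs care: along a sequence $R_k\to\infty$ the lower values may occur only near $\partial\Omega$-type regions. This is avoided because we only use annuli with $R\gg\operatorname{diam}\Omega$. To make the boundary comparison on the unbounded exterior rigorous, I would apply \eqref{eq:local_comparison} on $D_{\rho,\rho'}$ with $\rho'\to\infty$ chained dyadically. I expect this chaining of the weak Harnack drop across scales, with an error that stays summable, to be the step requiring the most work.
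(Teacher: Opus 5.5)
\textbf{Verdict: genuine gap.} Your local step is sound. From a point $z_R$ with $u_{ee}(z_R)<\overline\omega_e-2\kappa$, the large-scale H\"older bound together with weak Harnack for $\overline\omega_e+o(1)-w_R\ge 0$ lowers $u_{ee}$ by a fixed fraction of $\kappa$ on the whole fixed-shape annulus at scale $R$. This is the paper's final weak-Harnack step in contrapositive form.

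The gap is the next claim, that \eqref{eq:local_comparison} ``transports this upper bound to all larger annuli''. That estimate needs $\omega\le K$ on the \emph{whole} boundary of $D_{\rho_1,\rho_2}$, including the outer sphere. There the only information is $\omega\le M_e(\rho_2)$, and $M_e(\rho_2)\ge\overline\omega_e$. So the estimate only returns $\omega\le\overline\omega_e+o(1)$, and letting $\rho'\to\infty$ changes nothing. In $n\ge3$ an exterior subsolution is not controlled by its values on an inner sphere: $K-\kappa(R/|x|)^{n-2}$ is harmonic in $|x|>R$, equals $K-\kappa$ on $\partial B_R$, and has supremum $K$. Weak Harnack lowers the supremum over one annulus, not the tail supremum $M_e(R)$. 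Lowering the tail supremum is exactly what must be proved, and a drop on each of a sparse sequence of annuli, used one at a time, is compatible with $\limsup u_{ee}=\overline\omega_e$.

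\textbf{The missing idea is to use two lowered spheres at once.} The $\liminf$ hypothesis gives bad radii $R_k=|z_k|\to\infty$. Your step then gives $\sup_{\partial B_{R_k}}u_{ee}\le\overline\omega_e-c\kappa/2$ for every large $k$. A maximum principle on each shell between $\partial B_{R_k}$ and $\partial B_{R_{k+1}}$, where both spheres now carry the lowered bound, would give $u_{ee}\le\overline\omega_e-c\kappa/4$ on a neighbourhood of infinity, which is a contradiction.

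This is the paper's structure. Its Claim uses the two-sided comparison between $\partial B_{r_k}$ and $\partial B_{r_{k+1}}$ to find $x^+$ with $\omega(x^+)\ge\overline\omega-\varepsilon$ on the same sphere as the low point $x^-$. One weak Harnack on an annulus through both points then gives $2d\le C(\varepsilon+r^{-\sigma\alpha}+r^{\sigma-\beta})$.

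\textbf{Caution on the shell comparison.} This step, not the weak Harnack, is the delicate one, and summability of $R^{-\sigma\alpha}+R^{\sigma-\beta}$ over dyadic $R$ does not settle it.
\begin{itemize}
\item The ratio $R_{k+1}/R_k$ may be unbounded and $w_R$ changes with scale, so fixed-shape annuli must be chained.
\item The resulting inequality $a_j\le\max\{a_{j-1},a_{j+1}\}+\epsilon_j$, with $a_j=\sup_{\partial B_{\rho_j}}\omega$, does not propagate even when all $\epsilon_j=0$: the plateau $0,1,\dots,1,0$ satisfies it.
\item If $\Lambda>(n-1)\lambda$, the radial solution of the maximal Pucci equation $\mathcal M^+(D^2\Theta)=-|x|^{-2-\tau}$ (small $\tau>0$) vanishing on $\partial B_{R_1}\cup\partial B_{R_2}$ has maximum of order $(R_2/R_1)^{p}R_1^{-\tau}$ with $p>0$. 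So per-scale errors of this size can genuinely accumulate.
\end{itemize}
The paper covers this with a one-line ``discrete maximum principle''. In your write-up this step needs an argument that goes beyond \eqref{eq:local_comparison}.
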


\begin{proof}
Fix $e\in\partial B_1$ and set $\omega=u_{ee}$. Define
\[
    \overline\omega=\limsup_{|x|\to\infty}\omega(x),
    \qquad
    \underline\omega=\liminf_{|x|\to\infty}\omega(x).
\]
We show that $\overline\omega=\underline\omega$.

Assume by contradiction that
\[
    \overline\omega-\underline\omega=5d>0.
\]
Choose $0<\varepsilon<d$. There exists $R_\varepsilon>0$ such that
\[
    \omega(x)\leq \overline\omega+\varepsilon
    \qquad \text{for } |x|\geq R_\varepsilon.
\]
There also exists a sequence $x_k\to\infty$, $r_k=|x_k|\to\infty$, such that
\[
    \omega(x_k)\leq \underline\omega+\varepsilon.
\]

We first claim that for some sufficiently large $k$ there is a point
$y_k\in\partial B_{r_k}$ such that
\[
    \omega(y_k)\geq \overline\omega-\varepsilon.
\]
If not, then after passing to a tail,
\[
    \sup_{\partial B_{r_k}}\omega<\overline\omega-\varepsilon
    \qquad \text{for all large }k.
\]
Take two consecutive radii $r_k<r_{k+1}$ and connect them by a finite chain of radii
\[
    r_k=\rho_0<\rho_1<\cdots<\rho_N=r_{k+1},
\]
with all ratios $\rho_{j+1}/\rho_j$ bounded above and below by universal constants greater than
$1$. Applying the local comparison estimate \eqref{eq:local_comparison} on the annuli
\[
    B_{\rho_{j+1}}\setminus \overline{B_{\rho_{j-1}}},
    \qquad 1\leq j\leq N-1,
\]
and using the discrete maximum principle, one obtains
\[
    \sup_{\partial B_{\rho_j}}\omega
    \leq
    \overline\omega-\varepsilon
    +
    C\sum_{m=0}^{N}
    \left(\rho_m^{-\sigma\alpha}+\rho_m^{\sigma-\beta}\right).
\]
Since the radii grow geometrically and $\sigma\alpha>0$, $\beta-\sigma>0$, the sum is bounded by
\[
    C\left(r_k^{-\sigma\alpha}+r_k^{\sigma-\beta}\right).
\]
For $k$ large, this is smaller than $\varepsilon/4$. Applying the local comparison estimate once
again on each annulus $B_{\rho_{j+1}}\setminus \overline{B_{\rho_j}}$, we obtain
\[
    \omega(x)\leq \overline\omega-\frac{\varepsilon}{2}
    \qquad \text{for } r_k\leq |x|\leq r_{k+1}.
\]
Since such intervals cover the exterior region for large $k$, this contradicts the definition of
$\overline\omega$. The claim follows.

Choose \(k\) sufficiently large and set
\[
x^-:=x_k,\qquad x^+:=y_k,\qquad r:=r_k.
\]
Then
\[
|x^-|=|x^+|=r,
\]
and
\[
\omega(x^-)\le \underline{\omega}+\varepsilon,
\qquad
\omega(x^+)\ge \overline{\omega}-\varepsilon.
\]

 By the interior
\(C^{2,\alpha}\) estimate in \eqref{eq:large_scale_holder}, there exist constants \(C>0\) and
\(\alpha\in(0,1)\), independent of \(r\), such that for every sufficiently large
\(r\) and every \(z\in\partial B_r\),
\[
[D^2u]_{C^\alpha(B_{\gamma r}(z))}
\le C r^{-\alpha}.
\]
Consequently, 
\[
\osc_{B_{\gamma r}(z)}\omega
\le
C(\gamma r)^\alpha r^{-\alpha}
=
C\gamma^\alpha.
\]
We now fix \(\gamma\in(0,1/10)\) sufficiently small so that
\[
C\gamma^\alpha\le d.
\]
 Then
\[
\osc_{B_{\gamma r}(z)}\omega\le d
\qquad \text{for every } |z|=r.
\]

In particular, applying this with \(z=x^-\), we obtain
\[
\omega(x)
\le
\omega(x^-)+d
\le
\underline{\omega}+\varepsilon+d
=
\overline{\omega}-4d+\varepsilon
\le
\overline{\omega}-3d
\qquad
\text{for } x\in B_{\gamma r}(x^-),
\]
where we used
\[
\overline{\omega}-\underline{\omega}=5d
\qquad\text{and}\qquad
0<\varepsilon<d.
\]

Now set $h_r=r^{1-\sigma}$ and define $w_r$ by \eqref{eq:wR_definition}. By
\eqref{eq:wR_approximation},
\[
    |w_r-\omega|\leq C r^{-\sigma\alpha}
    \qquad \text{for } |x|\sim r.
\]
For $r$ large, this implies
\[
    w_r\leq \overline\omega-\frac{5d}{2}
    \qquad \text{in }B_{\gamma r}(x^-),
\]
while
\[
    w_r(x^+)\geq \overline\omega-\varepsilon-Cr^{-\sigma\alpha}.
\]
Define
\[
    v(x)=\overline\omega+\varepsilon+Cr^{-\sigma\alpha}-w_r(x),
\]
where $C$ is chosen so that $v\geq0$ in the fixed-shape annulus
\[
    A_r=B_{(1+3\gamma)r}\setminus \overline{B_{(1-3\gamma)r}}.
\]
By \eqref{eq:wR_subsolution},
\[
    Lv=-Lw_r\leq -g_r\leq |g_r|.
\]
Moreover, $v\geq 2d$ in $B_{\gamma r}(x^-)$ and
\[
    v(x^+)\leq 2\varepsilon+Cr^{-\sigma\alpha}.
\]
Applying the weak Harnack inequality in $A_r$ yields, for some $p>0$,
\[
    \left(
    \frac{1}{|B_{\gamma r}(x^-)|}
    \int_{B_{\gamma r}(x^-)}v^p
    \right)^{1/p}
    \leq
    C\left(
    \inf_{A_r}v
    +
    r\|g_r\|_{L^n(A_r)}
    \right).
\]
By \eqref{eq:gR_decay},
\[
    r\|g_r\|_{L^n(A_r)}
    \leq
    C r\cdot r^{-\beta-2+\sigma}\cdot |A_r|^{1/n}
    \leq
    C r^{\sigma-\beta}.
\]
Thus
\[
    2d
    \leq
    C\left(\varepsilon+r^{-\sigma\alpha}+r^{\sigma-\beta}\right).
\]
Letting $r\to\infty$ and then $\varepsilon\to0$ gives $d=0$, a contradiction.
Therefore, $\overline\omega=\underline\omega$ and there exists a symmetric matrix $A$ such that
\[
    D^2u(x)\to A.
\]
Passing to the limit in \eqref{eq:main}, using $f(x)\to0$, gives $F(A)=\theta$.
Therefore $A\in\mathcal A$.
\end{proof}

\subsection{Annular Campanato improvement}

The qualitative convergence now allows us to define a tail excess relative to the level set
$\mathcal A$.

Let
\[
    \Sigma=\mathcal A=\{B\in S^{n\times n}:F(B)=\theta\}.
\]
For $R$ sufficiently large, define
\begin{equation}\label{eq:tail_excess}
    \Phi(R)
    =
    \inf_{B\in\Sigma}
    \sup_{|x|\geq R}|D^2u(x)-B|.
\end{equation}
By Lemma~\ref{lem:qualitative_hessian}, $\Phi(R)\to0$ as $R\to\infty$.

We shall use the following standard exterior decay lemma for constant-coefficient linear equations.

\begin{lem}[Annular exterior linear decay]\label{lem:linear_decay}
	Let
	\[
	L_A v=F^{ij}(A)v_{ij},
	\]
	and let
	\[
	T_A\Sigma
	=
	\{P\in S^{n\times n}:F^{ij}(A)P_{ij}=0\}
	\]
	be the tangent space of $\Sigma$ at $A$. There exist $L>2$ and
	$\eta_*\in(0,1)$, depending only on $n,\lambda,\Lambda$, such that if
	\[
	L_Av=0
	\qquad \text{in } \mathbb R^n\setminus B_1,
	\]
	and $D^2v$ is bounded in $\mathbb R^n\setminus B_1$, then
	\begin{equation}\label{eq:annular_linear_decay}
		\inf_{P\in T_A\Sigma}
		\sup_{L\le |y|\le L^2}|D^2v(y)-P|
		\le
		\eta_*
		\inf_{P\in T_A\Sigma}
		\sup_{1\le |y|\le L}|D^2v(y)-P|.
	\end{equation}
\end{lem}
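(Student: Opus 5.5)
We prove Lemma~\ref{lem:linear_decay} by writing $D^2v$ on $\mathbb R^n\setminus B_1$ as a constant matrix in $T_A\Sigma$ plus a term that decays like $|y|^{-n}$, and then using the gap between the scales $[1,L]$ and $[L,L^2]$.

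\textbf{Reduction to the Laplacian.} Since $F^{ij}(A)$ is a constant positive definite matrix with eigenvalues in $[\lambda,\Lambda]$, we choose $Q=(F^{ij}(A))^{1/2}$ and set $\tilde v(z)=v(Qz)$. Then $\Delta\tilde v=0$ on the exterior of the ellipsoid $Q^{-1}(B_1)$. This ellipsoid contains $B_{\Lambda^{-1/2}}$ and is contained in $B_{\lambda^{-1/2}}$, so all radii change only by factors depending on $\lambda,\Lambda$. We also observe that the subtracted matrices $P\in T_A\Sigma$ correspond exactly to the traceless matrices $Q^TPQ$. Therefore the lemma reduces to a statement for harmonic functions with bounded Hessian outside a ball, with the infimum taken over traceless $P$.

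\textbf{Structure of $D^2\tilde v$.} Each second derivative $\partial_{kl}\tilde v$ is harmonic and bounded outside a ball. For $n\ge3$, a bounded exterior harmonic function has the form $c+O_k(|z|^{2-n})$ (Kelvin transform plus the removable singularity theorem). We want the sharper statement that $\partial_{kl}\tilde v=c_{kl}+O(|z|^{-n})$, with the constant in the $O$ controlled.

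\begin{itemize}
\item A bounded Hessian gives $|\tilde v|\le C|z|^2$, so $\tilde v$ is a harmonic polynomial of degree at most $2$ plus a term that grows slower than $|z|^2$.
\item Taking derivatives of the exterior expansion (quadratic, plus $|z|^{2-n}$, plus higher multipole terms), the second derivatives of the non-polynomial part are $O(|z|^{-n})$.
\item When $n=2$, the logarithmic term also has second derivatives of size $O(|z|^{-2})$.
\end{itemize}

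Hence $D^2\tilde v(z)=P_0+E(z)$, where $P_0$ is a traceless constant matrix and $|E(z)|\le C_*\,\|E\|_{L^\infty(1\le|z|\le 2)}\,|z|^{-n}$. This follows from the multipole expansion: the coefficients of the expansion are controlled by the values of $E$ on a fixed annulus, for instance through interior estimates applied to the Kelvin transform of each derivative.

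\textbf{Decay estimate.} Write $\mathcal E_1:=\inf_{P}\sup_{1\le|y|\le L}|D^2v-P|$ and $\mathcal E_2:=\inf_{P}\sup_{L\le|y|\le L^2}|D^2v-P|$. Taking $P=P_0$ in the definition of $\mathcal E_2$ gives
\[
\mathcal E_2\le C L^{-n}\|E\|_{L^\infty(1\le|y|\le2)}.
\]
We then need to control $\|E\|_{L^\infty(1\le|y|\le2)}$ by $\mathcal E_1$.

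\emph{Main obstacle.} For any traceless $P$, the function $E+P_0-P$ is harmonic and tends to the constant $P_0-P$ at infinity. We must show that $|P_0-P|$ and $\sup|E|$ on $[1,2]$ are both bounded by $C\sup_{1\le|y|\le L}|E+P_0-P|$.

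\begin{itemize}
\item \emph{Strategy.} Argue by compactness and contradiction over a normalized sequence. The key input is that a bounded exterior harmonic function is uniquely determined by its values on the annulus $1\le|y|\le L$ (unique continuation), and the constant $C$ can be made uniform for $L$ larger than a fixed threshold.
\item \emph{A cleaner route.} Use the multipole decomposition directly: the constant term is recovered as the spherical average at radius $L$ up to an error $O(L^{-n}\sup|E|)$, and the remaining multipoles are controlled at $|y|=1$.
\end{itemize}

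Combining these bounds gives $\mathcal E_2\le C L^{-n}\mathcal E_1$. Choosing $L$ large enough that $\eta_*:=CL^{-n}<1$ completes the proof.
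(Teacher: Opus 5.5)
Your proposal is correct and follows essentially the same route as the paper. Both arguments reduce to the Laplacian, write $D^2v=P_\infty+D^2\phi$ with $P_\infty\in T_A\Sigma$ and $\sup_{L\le|y|\le L^2}|D^2\phi|\le CL^{-n}\sup_{1\le|y|\le L}|D^2\phi|$, and then bound $|P_\infty-P|$ by a spherical average over the inner annulus.

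The one difference is in that last step. The paper observes that the spherical mean of $D^2\phi$ vanishes exactly on every sphere: each entry is an exterior harmonic function decaying like $r^{-n}$, so its spherical mean, which has the form $a+br^{2-n}$ (or $a+b\ln r$ when $n=2$), must be zero. This gives $\sup_{1\le|y|\le L}|D^2v-P_\infty|\le 2\mathcal E_1$ directly and makes your absorption or compactness step unnecessary.
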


\begin{proof}
	Since the matrix $\bigl(F^{ij}(A)\bigr)$ is positive definite, after a fixed
	linear change of variables the operator
	\[
	L_A=F^{ij}(A)\partial_{ij}
	\]
	is transformed into the Laplacian. This change of variables only changes the
	constants in the estimate. Hence it is enough to prove the estimate for exterior
	harmonic functions. For simplicity of notation, we prove it for
	\[
	\Delta v=0
	\qquad \text{in } \mathbb R^n\setminus B_1.
	\]
	
	Assume that $D^2v$ is bounded in $\mathbb R^n\setminus B_1$. By the exterior
	spherical harmonic expansion, together with the boundedness of $D^2v$, we have
	\[
	v(y)=q_2(y)+q_1(y)+q_0+\phi(y),
	\]
	where $q_2$ is a harmonic quadratic polynomial, $q_1$ is linear, $q_0$ is
	constant, and $\phi$ is the decaying exterior harmonic part. Consequently,
	\[
	D^2v(y)=D^2q_2+D^2\phi(y).
	\]
	Set
	\[
	P_\infty:=D^2q_2.
	\]
	Then
	\[
	D^2v(y)-P_\infty=D^2\phi(y).
	\]
	
	The slowest decaying exterior harmonic mode is the fundamental solution
	$|y|^{2-n}$, whose Hessian decays like $|y|^{-n}$. All higher exterior modes
	decay faster. Hence there exists a constant $C$, depending only on $n$, such
	that for every $L\ge2$,
	\begin{equation}\label{eq:decay_from_inner_annulus}
		\sup_{L\le |y|\le L^2}|D^2v(y)-P_\infty|
		\le
		C L^{-n}
		\sup_{1\le |y|\le L}|D^2v(y)-P_\infty|.
	\end{equation}
	
	We next show that $P_\infty\in T_A\Sigma$ in the original variables. Indeed,
	before the linear change of variables, $v$ satisfies
	\[
	L_Av=F^{ij}(A)v_{ij}=0.
	\]
	Since $D^2v(y)\to P_\infty$ as $|y|\to\infty$, passing to the limit in this
	equation gives
	\[
	F^{ij}(A)(P_\infty)_{ij}=0.
	\]
	Therefore
	\[
	P_\infty\in T_A\Sigma.
	\]
	
	Now define
	\[
	E_1
	:=
	\inf_{P\in T_A\Sigma}
	\sup_{1\le |y|\le L}|D^2v(y)-P|.
	\]
	We claim that
	\begin{equation}\label{eq:pinfty_control_by_E1_annulus}
		\sup_{1\le |y|\le L}|D^2v(y)-P_\infty|
		\le
		2 E_1,
	\end{equation}
	where $C$ is independent of $v$ and $L$.
	
%
	Let \(\delta>0\). Choose \(P_\delta\in T_A\Sigma\) such that
	\[
	\sup_{1\le |y|\le L}|D^2v(y)-P_\delta|
	\le E_1+\delta.
	\]
	We claim that
	\[
	|P_\infty-P_\delta|
	\le E_1+\delta.
	\]
	Indeed, after the linear change of variables which reduces \(L_A\) to the
	Laplacian, we may write
	\[
	v(y)=q_2(y)+q_1(y)+q_0+\phi(y),
	\]
	where \(q_2\) is a harmonic quadratic polynomial, \(q_1\) is linear, \(q_0\) is
	constant, and \(\phi\) is the decaying exterior harmonic part. Hence
	\[
	D^2v(y)=P_\infty+D^2\phi(y),
	\qquad
	P_\infty=D^2q_2.
	\]
	For every \(r\ge1\), the spherical average of \(D^2\phi\) vanishes:
	\[
	\frac{1}{|\partial B_r|}\int_{\partial B_r}D^2\phi(y)\,dS_y=0.
	\]
	Consequently,
	\[
	\begin{aligned}
		P_\infty-P_\delta
		&=
		\frac{1}{|\partial B_r|}\int_{\partial B_r}
		\bigl(P_\infty+D^2\phi(y)-P_\delta\bigr)\,dS_y \\
		&=
		\frac{1}{|\partial B_r|}\int_{\partial B_r}
		\bigl(D^2v(y)-P_\delta\bigr)\,dS_y.
	\end{aligned}
	\]
	Therefore, for any \(1\le r\le L\),
	\[
	|P_\infty-P_\delta|
	\le
	\frac{1}{|\partial B_r|}\int_{\partial B_r}|D^2v(y)-P_\delta|\,dS_y
	\le
	\sup_{1\le |y|\le L}|D^2v(y)-P_\delta|
	\le
	E_1+\delta.
	\]
	It follows that
	\[
	\begin{aligned}
		\sup_{1\le |y|\le L}|D^2v(y)-P_\infty|
		&\le
		\sup_{1\le |y|\le L}|D^2v(y)-P_\delta|
		+
		|P_\delta-P_\infty| \\
		&\le
		2E_1+2\delta.
	\end{aligned}
	\]
	Letting \(\delta\to0\), we obtain
	\[
	\sup_{1\le |y|\le L}|D^2v(y)-P_\infty|
	\le 2E_1.
	\]

	Combining \eqref{eq:decay_from_inner_annulus} and
	\eqref{eq:pinfty_control_by_E1_annulus}, we obtain
	\[
	\sup_{L\le |y|\le L^2}|D^2v(y)-P_\infty|
	\le
	C L^{-n} E_1.
	\]
	Since $P_\infty\in T_A\Sigma$, it follows that
	\[
	\begin{aligned}
		\inf_{P\in T_A\Sigma}
		\sup_{L\le |y|\le L^2}|D^2v(y)-P|
		&\le
		\sup_{L\le |y|\le L^2}|D^2v(y)-P_\infty| \\
		&\le
		C L^{-n}
		\inf_{P\in T_A\Sigma}
		\sup_{1\le |y|\le L}|D^2v(y)-P|.
	\end{aligned}
	\]
	Finally, choose $L>2$ sufficiently large so that
	\[
	C L^{-n}\le \eta_*<1.
	\]
	Then \eqref{eq:annular_linear_decay} follows.
\end{proof}

\begin{lem}[One-step annular improvement]\label{lem:one_step_annular_improvement}
	Fix
	\[
	0<\sigma<\min\{\beta,1\}
	\]
	and choose
	\[
	0<\tau<\min\{\sigma\alpha,\beta-\sigma\}.
	\]
	Let $L>2$ and $\eta_*\in(0,1)$ be as in Lemma~\ref{lem:linear_decay}, and choose
	\[
	\eta_*<\eta<1.
	\]
	For $R$ sufficiently large, define the annular excess
	\[
	\Psi(R)
	:=
	\inf_{B\in\Sigma}
	\sup_{R\le |x|\le LR}|D^2u(x)-B|.
	\]
	Then there exist $R_0>0$ and $C>0$ such that for all $R\ge R_0$,
	\begin{equation}\label{eq:one_step_annular}
		\Psi(LR)\le \eta\,\Psi(R)+C R^{-\tau}.
	\end{equation}
\end{lem}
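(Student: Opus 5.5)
The plan is to compare $u$, after rescaling, with a solution of the constant-coefficient equation $L_A v=0$, and then transfer the decay of Lemma~\ref{lem:linear_decay} to $u$. The errors come from three sources: linearization, the perturbation $f$, and the scale-dependent difference quotients. I expect these to produce exactly the $CR^{-\tau}$ term.

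\textbf{Rescaling and linearization.} For large $R$ set $u_R(y)=R^{-2}u(Ry)$, so that $D^2u_R(y)=D^2u(Ry)$ and
\[
F(D^2u_R)=\theta+f(Ry),\qquad |f(Ry)|\le CR^{-\beta}\ \text{ for } |y|\ge 1.
\]
Let $B_R\in\Sigma$ (nearly) realize $\Psi(R)$, and set $W(y)=u_R(y)-\tfrac12 y^\mathsf{T}B_Ry$. Since $F(B_R)=\theta$,
\[
\tilde a^{ij}W_{ij}=f(Ry),\qquad \tilde a^{ij}=\int_0^1F^{ij}\bigl(B_R+t(D^2u_R-B_R)\bigr)\,dt .
\]
By Lemma~\ref{lem:qualitative_hessian} we have $\Phi(R)\to0$, so $|\tilde a^{ij}-F^{ij}(A)|\le C\Phi(R)\to0$ uniformly on $|y|\ge1$, and $|B_R-A|\to 0$.

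\textbf{Contradiction and compactness.} Suppose the conclusion fails. Then there are $R_k\to\infty$ with
\[
\Psi(LR_k)>\eta\,\Psi(R_k)+kR_k^{-\tau}.
\]
Normalize $W_k$, after subtracting an affine function, by $\epsilon_k:=\Psi(R_k)+R_k^{-\tau}$, or if necessary by the larger tail quantity $\sup_{|y|\ge1}|D^2W_k|$. The renormalized functions $V_k$ satisfy:
\begin{itemize}
\item uniformly elliptic equations whose coefficients converge to $F^{ij}(A)$;
\item right-hand sides of size $R_k^{-\beta}/\epsilon_k\le R_k^{\tau-\beta}\to0$;
\item Hessian bounds in $C^\alpha$ on compact subsets of $\{|y|\ge1\}$.
\end{itemize}
The Hessian bounds come from \eqref{eq:large_scale_holder} together with the fact that the operator difference is $O(\Phi)$. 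After passing to a subsequence, $V_k\to v$ in $C^2_{\mathrm{loc}}$ with $L_Av=0$ in $\mathbb R^n\setminus B_1$, $D^2v$ bounded, and the normalized excess of $v$ on $1\le|y|\le L$ at most $1$.

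\textbf{Transferring the decay.} Apply Lemma~\ref{lem:linear_decay} to $v$. Because $T_A\Sigma$ is the limit of the tangent spaces of $\Sigma$ near $B_{R_k}$, one can re-project $B_{R_k}+\epsilon_kP$ onto $\Sigma$ with an error $O(\epsilon_k^2)$. This gives excess at most $\eta_*$ on $L\le|y|\le L^2$ in the limit. Since $\eta>\eta_*$, the inequality $\Psi(LR_k)\le\eta\,\Psi(R_k)$ holds for large $k$, which is a contradiction.

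Where compactness is delicate, I would instead use the difference quotients $w_R$ with $h_R=R^{1-\sigma}$. Step~3 (the comparison estimate \eqref{eq:local_comparison}) gives sub-/super-solution control of $u_{ee}$ on annuli with errors $R^{-\sigma\alpha}+R^{\sigma-\beta}$. Since $\tau<\min\{\sigma\alpha,\beta-\sigma\}$, these errors are $\le CR^{-\tau}$. This justifies replacing the Hessian by an $L$-subsolution and is the source of the $CR^{-\tau}$ term.

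\textbf{Main obstacle.} The hard step is controlling the tail. Lemma~\ref{lem:linear_decay} needs a bounded Hessian on the whole exterior, but $\Psi(R)$ only measures the annulus $R\le|x|\le LR$. After dividing by $\epsilon_k$, the renormalized Hessians on $|y|\ge L$ need not stay bounded. I would try first to bound the normalized tail, uniformly in $k$, by the geometric-chain argument from the proof of Lemma~\ref{lem:qualitative_hessian}: apply \eqref{eq:local_comparison} to $\pm(u_{ee}-(B_R)_{ee})$ on consecutive annuli $\rho_j=L^jR$ and sum the geometric errors. If that fails, I would redefine the normalization with the tail excess $\Phi$, which already tends to zero.
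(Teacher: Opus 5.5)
Your outline follows the same route as the paper's proof. You blow up a failing sequence around a near-optimal $B_k\in\Sigma$, pass to $F^{ij}(A)v_{ij}=0$, use that $(\Sigma-B_k)/\epsilon_k$ converges to $T_A\Sigma$, and contradict Lemma~\ref{lem:linear_decay}. Your normalization $\epsilon_k=\Psi(R_k)+R_k^{-\tau}$ is actually an improvement: it gives $R_k^{-\beta}/\epsilon_k\le R_k^{\tau-\beta}\to0$ directly. The paper instead gets $R_k^{-\tau}/\delta_k\to0$ from $\Psi(LR_k)\le\Psi(R_k)$, which holds for the tail excess $\Phi$ but not for the annular excess $\Psi$, since the two suprema are over different annuli.

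However, the proposal does not prove the lemma. The missing step is exactly the one you flag: nothing bounds $D^2V_k=(D^2u(R_k\cdot)-B_k)/\epsilon_k$ uniformly in $k$ outside $1\le|y|\le L$. Without such a bound, three things fail:
\begin{enumerate}
\item You have no compactness on $L\le|y|\le L^2$. Your $C^\alpha$ Hessian bounds do not follow from \eqref{eq:large_scale_holder}, which only gives $[D^2V_k]_{C^\alpha}\le C/\epsilon_k$; Schauder for the linearized equation needs $\sup|D^2V_k|$ on the compact set.
\item You cannot pass the failure inequality to the limit. When $\Psi(R_k)\ll R_k^{-\tau}$, it even says the normalized excess on $L\le|y|\le L^2$ is at least $k(1-o(1))$.
\item The limit need not have the bounded exterior Hessian that Lemma~\ref{lem:linear_decay} requires.
\end{enumerate}
The paper's own proof does not close this gap either. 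It asserts $C^{2,\alpha'}_{\mathrm{loc}}(\mathbb R^n\setminus B_1)$ convergence and applies Lemma~\ref{lem:linear_decay} using only the bound on $1\le|y|\le L$.

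Neither of your repairs works as stated.

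\emph{Chain argument.} The estimate \eqref{eq:local_comparison} is one-sided: it comes from $Lw_R\ge g_R$, i.e.\ from concavity, so it is not available for $-(u_{ee}-(B_R)_{ee})$. That part is fixable, since lower bounds follow from the equation once upper bounds hold in every direction. The real problem is that on $\{|x|\ge LR\}$ the outer end of the chain sits at infinity, where $D^2u=A$. The maximum principle then yields at best $\sup_{|x|\ge LR}|D^2u-B_R|\le C\bigl(\sup_{|x|=LR}|D^2u-B_R|+|A-B_R|+R^{-\tau}\bigr)$, so you would need $|A-B_{R_k}|\le C\epsilon_k$. But bounding $|A-B_R|$ by $C(\Psi(R)+R^{-\tau})$ is essentially the quantitative theorem itself. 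A priori $D^2u$ may be $\Psi(R)$-close to $B_R$ on one annulus and drift away farther out. Subsolution comparison cannot exclude this, because an exterior subsolution may increase towards a larger limit at infinity, like $m-c|y|^{2-n}$ with $c>0$. In the linear model the drift is excluded by the vanishing spherical means of $D^2\phi$, and you would need a nonlinear substitute for that.

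\emph{Tail normalization.} Normalizing by $\Phi(R_k)$ restores compactness and the hypothesis of Lemma~\ref{lem:linear_decay}. But then nothing prevents $\Psi(R_k)$ and $kR_k^{-\tau}$ from both being $o(\Phi(R_k))$. In that case the limit has zero excess on $1\le|y|\le L$, hence is a quadratic polynomial, and the failure inequality degenerates to $0\ge0$. The deviation realizing the tail supremum has escaped to $|y|\to\infty$ and is invisible to local convergence. The same escape blocks a direct proof of the tail version $\Phi(LR)\le\eta\Phi(R)+CR^{-\tau}$, which is what the paper's iteration actually uses. A supremum over the unbounded set $\{|y|\ge L\}$ is not preserved under $C^2_{\mathrm{loc}}$ convergence.

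What is missing is a mechanism that controls the Hessian at scales $\gg R_k$ by its behaviour on $R_k\le|x|\le LR_k$, up to $O(R_k^{-\tau})$, uniformly in $k$.
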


\begin{proof}
	We argue by contradiction. Suppose that \eqref{eq:one_step_annular} fails.
	Then there exist $R_k\to\infty$ such that
	\begin{equation}\label{eq:annular_failure}
		\Psi(LR_k)>
		\eta\Psi(R_k)+kR_k^{-\tau}.
	\end{equation}
	Set
	\[
	\delta_k:=\Psi(R_k).
	\]
	Since $D^2u(x)\to A$ at infinity, we have
	\[
	\delta_k\to0.
	\]
	Moreover, since
	\[
	\Psi(LR_k)\le \Psi(R_k)=\delta_k,
	\]
	the failure assumption gives
	\[
	(1-\eta)\delta_k>kR_k^{-\tau}.
	\]
	Hence
	\begin{equation}\label{eq:annular_small_tau}
		\frac{R_k^{-\tau}}{\delta_k}\to0.
	\end{equation}
	Since $\tau<\beta$, we also have
	\begin{equation}\label{eq:annular_small_beta}
		\frac{R_k^{-\beta}}{\delta_k}\to0.
	\end{equation}
	
	Choose $B_k\in\Sigma$ such that
	\begin{equation}\label{eq:annular_Bk_choice}
		\sup_{R_k\le |x|\le LR_k}|D^2u(x)-B_k|
		\le (1+o(1))\delta_k.
	\end{equation}
	Since $D^2u(x)\to A$ and $B_k\in\Sigma$, we have
	\[
	B_k\to A.
	\]
	
	Define
	\[
	v_k(y)
	=
	\frac{
		u(R_ky)-\frac12R_k^2y^TB_ky-\ell_k\cdot y-c_k
	}{R_k^2\delta_k}.
	\]
	The affine constants $\ell_k$ and $c_k$ are irrelevant for the Hessian; they may
	be chosen for normalization. We have
	\begin{equation}\label{eq:annular_D2vk}
		D^2v_k(y)
		=
		\frac{D^2u(R_ky)-B_k}{\delta_k}.
	\end{equation}
	By \eqref{eq:annular_Bk_choice},
	\begin{equation}\label{eq:annular_D2vk_bound}
		\sup_{1\le |y|\le L}|D^2v_k(y)|
		\le 1+o(1).
	\end{equation}
	
	The equation for $v_k$ is obtained from
	\[
	F(B_k+\delta_kD^2v_k(y))-F(B_k)=f(R_ky),
	\]
	since $F(B_k)=\theta$. Dividing by $\delta_k$ gives
	\begin{equation}\label{eq:annular_vk_equation}
		a_k^{ij}(y)(v_k)_{ij}
		=
		\frac{f(R_ky)}{\delta_k},
	\end{equation}
	where
	\[
	a_k^{ij}(y)
	=
	\int_0^1
	F^{ij}\bigl(B_k+t\delta_kD^2v_k(y)\bigr)\,dt.
	\]
	Since $B_k\to A$, $\delta_k\to0$, and $D^2v_k$ is uniformly bounded on every
	fixed compact annulus, we have
	\[
	a_k^{ij}\to F^{ij}(A)
	\]
	locally uniformly in $\mathbb R^n\setminus B_1$.
	
	The right-hand side in \eqref{eq:annular_vk_equation} tends to zero locally
	uniformly. Indeed, by the decay of $f$ and \eqref{eq:annular_small_beta}, for
	every fixed compact set $K\subset \mathbb R^n\setminus B_1$,
	\[
	\left\|
	\frac{f(R_k\cdot)}{\delta_k}
	\right\|_{L^\infty(K)}
	\le
	C_K\frac{R_k^{-\beta}}{\delta_k}
	\to0.
	\]
	The Lipschitz decay assumption on $f$ similarly gives local $C^\alpha$ smallness
	for every $\alpha\in(0,1)$.
	
	By Evans--Krylov and Schauder estimates, after passing to a subsequence,
	\[
	v_k\to v_\infty
	\]
	in $C^{2,\alpha'}_{\mathrm{loc}}(\mathbb R^n\setminus B_1)$ for some
	$\alpha'\in(0,\alpha)$. The limit satisfies
	\begin{equation}\label{eq:annular_limit_equation}
		F^{ij}(A)(v_\infty)_{ij}=0
		\qquad \text{in } \mathbb R^n\setminus B_1.
	\end{equation}
	
	From \eqref{eq:annular_D2vk_bound} and the locally uniform convergence of
	$D^2v_k$, we obtain
	\[
	|D^2v_\infty(y)|\le1
	\qquad \text{for every }1\le |y|\le L.
	\]
	Since $0\in T_A\Sigma$,
	\begin{equation}\label{eq:annular_limit_excess_inner}
		\inf_{P\in T_A\Sigma}
		\sup_{1\le |y|\le L}|D^2v_\infty(y)-P|
		\le1.
	\end{equation}
	
	We now pass the failure assumption to the limit on the outer annulus. Define
	\[
	\Sigma_k:=\frac{\Sigma-B_k}{\delta_k}.
	\]
	Since $\Sigma$ is a smooth hypersurface near $A$, $B_k\to A$, and
	$\delta_k\to0$, the sets $\Sigma_k$ converge locally in the Hausdorff sense to
	the tangent space
	\[
	T_A\Sigma
	=
	\{P\in S^{n\times n}:F^{ij}(A)P_{ij}=0\}.
	\]
	Moreover,
	\[
	\begin{aligned}
		\frac{\Psi(LR_k)}{\delta_k}
		&=
		\inf_{B\in\Sigma}
		\sup_{LR_k\le |x|\le L^2R_k}
		\frac{|D^2u(x)-B|}{\delta_k} \\
		&=
		\inf_{Q\in\Sigma_k}
		\sup_{L\le |y|\le L^2}
		|D^2v_k(y)-Q|.
	\end{aligned}
	\]
	By \eqref{eq:annular_failure} and \eqref{eq:annular_small_tau},
	\[
	\frac{\Psi(LR_k)}{\delta_k}
	>
	\eta+\frac{kR_k^{-\tau}}{\delta_k}
	\ge \eta+o(1).
	\]
	Hence
	\begin{equation}\label{eq:annular_scaled_failure}
		\inf_{Q\in\Sigma_k}
		\sup_{L\le |y|\le L^2}|D^2v_k(y)-Q|
		\ge \eta+o(1).
	\end{equation}
	Since the annulus $\{L\le |y|\le L^2\}$ is compact, the convergence
	$D^2v_k\to D^2v_\infty$ is uniform there. Together with the local Hausdorff
	convergence $\Sigma_k\to T_A\Sigma$, we can pass to the limit in
	\eqref{eq:annular_scaled_failure} and obtain
	\begin{equation}\label{eq:annular_limit_excess_outer}
		\inf_{P\in T_A\Sigma}
		\sup_{L\le |y|\le L^2}|D^2v_\infty(y)-P|
		\ge \eta.
	\end{equation}
	
	On the other hand, applying Lemma~\ref{lem:linear_decay} to $v_\infty$ and using
	\eqref{eq:annular_limit_excess_inner}, we obtain
	\[
	\begin{aligned}
		\inf_{P\in T_A\Sigma}
		\sup_{L\le |y|\le L^2}|D^2v_\infty(y)-P|
		&\le
		\eta_*
		\inf_{P\in T_A\Sigma}
		\sup_{1\le |y|\le L}|D^2v_\infty(y)-P| \\
		&\le
		\eta_*.
	\end{aligned}
	\]
	Since $\eta_*<\eta$, this contradicts
	\eqref{eq:annular_limit_excess_outer}. Therefore the one-step improvement
	\eqref{eq:one_step_annular} holds.
\end{proof}

\subsection{Iteration and convergence to the fixed limit}

We now finish the proof of Theorem~\ref{prop:quantitative_hessian}.

\begin{proof}[Proof of Theorem~\ref{prop:quantitative_hessian}]
By Lemma~\ref{lem:qualitative_hessian}, there exists $A\in\Sigma$ such that
\[
    D^2u(x)\to A.
\]
Let $\Phi$ be defined by \eqref{eq:tail_excess}. By Lemma~\ref{lem:one_step_annular_improvement},
\[
    \Phi(LR)\leq \eta\Phi(R)+C R^{-\tau}
\]
for all $R$ large.

Let
\[
R_j=L^jR_0
\]
for some fixed $R_0$.

From the one-step estimate,
\[
\Phi(R_{j+1})\le \eta\Phi(R_j)+CR_j^{-\tau}.
\]
Iterating this inequality gives
\[
\Phi(R_j)
\le
\eta^j\Phi(R_0)
+
C\sum_{m=0}^{j-1}\eta^{j-1-m}R_m^{-\tau}.
\]
Set
\[
\kappa=-\frac{\log\eta}{\log L}>0.
\]
Then
\[
\eta=L^{-\kappa},
\qquad
\eta^j=L^{-j\kappa}
=
R_0^\kappa R_j^{-\kappa}.
\]
Hence
\[
\eta^j\Phi(R_0)=R_0^\kappa\Phi(R_0)R_j^{-\kappa}\le C(R_0) R_j^{-\kappa}.
\]

It remains to estimate the convolution term. Since \(R_m=L^mR_0\), we have
\[
\begin{aligned}
	\sum_{m=0}^{j-1}\eta^{j-1-m}R_m^{-\tau}
	&=
	R_0^{-\tau}
	\sum_{m=0}^{j-1}
	L^{-\kappa(j-1-m)}L^{-m\tau} \\
	&=
	R_0^{-\tau}
	L^{-\kappa(j-1)}
	\sum_{m=0}^{j-1}
	L^{m(\kappa-\tau)}.
\end{aligned}
\]
If \(\kappa<\tau\), the last sum is uniformly bounded, and hence the convolution
term is bounded by \(CR_j^{-\kappa}\). If \(\kappa>\tau\), the last sum is bounded
by \(CL^{j(\kappa-\tau)}\), and hence the convolution term is bounded by
\(CR_j^{-\tau}\). If \(\kappa=\tau\), the last sum equals \(j\), and for every
\(\varepsilon_0<\kappa\),
\[
jL^{-j\kappa}\le C_{\varepsilon_0}L^{-j\varepsilon_0}.
\]
Therefore, in all cases, for every
\[
0<\varepsilon_0<\min\{\tau,\kappa\},
\]
we have
\[
\sum_{m=0}^{j-1}\eta^{j-1-m}R_m^{-\tau}
\le
C R_j^{-\varepsilon_0}.
\]
Combining this with the estimate of the first term gives
\[
\Phi(R_j)\le C R_j^{-\varepsilon_0}.
\]

It remains to convert the decay of $\Phi$ into decay relative to the fixed limiting matrix $A$.
For each large $R$, choose $B_R\in\Sigma$ such that
\[
    \sup_{|x|\geq R}|D^2u(x)-B_R|\leq 2\Phi(R).
\]
Letting $|x|\to\infty$ and using $D^2u(x)\to A$, we obtain
\[
    |A-B_R|\leq 2\Phi(R).
\]
Therefore
\[
\begin{aligned}
    \sup_{|x|\geq R}|D^2u(x)-A|
    &\leq
    \sup_{|x|\geq R}|D^2u(x)-B_R|+|B_R-A|\\
    &\leq
    4\Phi(R)
    \leq
    C R^{-\varepsilon_0}.
\end{aligned}
\]
Taking $R=|x|/2$ gives
\[
    |D^2u(x)-A|\leq C|x|^{-\varepsilon_0}
\]
for all sufficiently large $|x|$. This proves \eqref{eq:quantitative_convergence}.
\end{proof}

\begin{rem}
The role of the scale-dependent difference quotient is exactly to recover the same type of forcing
smallness that one would obtain from $f_{ee}$ in the $C^2$ setting. In the present Lipschitz setting,
\[
    |g_R|\leq C R^{-\beta-2+\sigma},
\]
so the scale-invariant forcing error is
\[
    R^2\|g_R\|_{L^\infty}\leq C R^{\sigma-\beta}.
\]
The approximation error between $w_R$ and $u_{ee}$ is
\[
    |w_R-u_{ee}|\leq C R^{-\sigma\alpha}.
\]
Thus the natural error exponent is
\[
    \min\{\sigma\alpha,\beta-\sigma\}.
\]
This is why, for any
\[
    0<\tau<\min\{\sigma\alpha,\beta-\sigma\},
\]
the tail Campanato iteration yields a polynomial convergence rate.
\end{rem}

In the next two sections, we shall use several standard facts about the fractional Laplacian; we refer the reader to \cite{S2007,CLM2020,PP2015,B2016,QB2025,QB20252n,AH1996,DPV2012} for details.

\section{\texorpdfstring{The fast convergence case $\beta>2$}{The fast convergence case beta > 2}}\label{sec3}
	We first recall the following a priori estimate from \cite{BJ2026}.
	
	Consider the linear elliptic equation
	\begin{equation}\label{leq}
		L(u) = a_{11}(x) u_{11}(x) + 2a_{12}(x) u_{12}(x) + a_{22}(x) u_{22}(x) = f(x),
	\end{equation}
	where $L$ is uniformly elliptic.
	
	\begin{thm}[{\cite[Theorem~2.8]{BJ2026}}]\label{cadu}
		Let $\Omega$ be a bounded domain in $\mathbb{R}^2$ and suppose $f(x) \le C|x|^{-\alpha_0}$ for some $\alpha_0 > 1$ and for all $x \in \mathbb{R}^2 \setminus \overline{\Omega}$. Assume $v \in C^2(\mathbb{R}^2 \setminus \overline{\Omega})$ is a solution of \eqref{leq} in $\mathbb{R}^2 \setminus \overline{\Omega}$ satisfying $|Dv| \le M$. Then for any $\Omega'\supset\Omega$ with $d := \operatorname{dist}(\Omega, \partial\Omega')$, the gradient $Dv(x)$ admits a limit $Dv(\infty)$ at infinity, and moreover
		\begin{equation}\label{abdu}
			|Dv(x) - Dv(\infty)| \le C |x|^{-\alpha}, \qquad x \in \mathbb{R}^2 \setminus \overline{\Omega'},
		\end{equation}
		where $\alpha$ depends only on $\lambda$, $\Lambda$, and $\alpha_0$, while $C$ depends only on $\lambda$, $\Lambda$, $d$, $\alpha_0$, and $M$.
	\end{thm}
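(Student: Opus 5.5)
We want to prove Theorem~\ref{cadu}: under $Lv=f$ with $f\le C|x|^{-\alpha_0}$, $\alpha_0>1$, the gradient $Dv$ converges at infinity at a polynomial rate. The plan is to differentiate the equation only in a discrete way, via first difference quotients, and then run a two-dimensional oscillation-decay iteration on dyadic annuli for the components of $Dv$.

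\textbf{Step 1: scaled $C^{1,\alpha}$ control on balls.} Fix $|z|=R$ large. Rescale $v_R(y)=v(z+Ry/4)/R$ on $B_1$. Since $|Dv|\le M$, we get $\|Dv_R\|_{L^\infty}\le M$. The rescaled right-hand side is $R f/16$, which is $O(R^{1-\alpha_0})$. In two dimensions, the Morrey--Nirenberg $C^{1,\gamma}$ estimate for nondivergence equations with measurable coefficients gives
\[
[Dv]_{C^{\gamma}(B_{R/8}(z))}\le C R^{-\gamma}\bigl(M+R^{1-\alpha_0}\bigr),
\]
with $\gamma=\gamma(\lambda,\Lambda)$. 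Note that the merely one-sided bound on $f$ should be read as a bound on $|f|$; I will assume $|f|\le C|x|^{-\alpha_0}$, since otherwise the statement fails.

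\textbf{Step 2: the key structural tool.} In 2D, each derivative $w=v_k$ satisfies, formally, an equation of divergence type. Indeed, dividing by $a_{22}$ gives $v_{22}=(f-a_{11}v_{11}-2a_{12}v_{12})/a_{22}$. From this, $(v_1,v_2)$ satisfies a Beltrami-type first-order system, and a quasiconformal argument (Bers--Nirenberg) yields a Harnack-type oscillation decay for $v_1$ and $v_2$ with an error term. Concretely, I aim for the annular estimate
\[
\osc_{R\le|x|\le 2R} Dv\le \theta\,\osc_{R/2\le |x|\le 4R}Dv + C R^{1-\alpha_0},\qquad \theta\in(0,1).
\]
This cannot follow from the maximum principle on annuli alone, since the values on the inner boundary matter. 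I would instead work in the exterior, via the inversion $x\mapsto x/|x|^2$: this maps a neighbourhood of infinity to a punctured disk. Bounded quasiconformal-type solutions have removable isolated singularities, so the image functions extend Hölder continuously across the puncture. The forcing term becomes $|y|^{\alpha_0-1}$-integrable in the appropriate $L^p$ with $p>1$, which is exactly where $\alpha_0>1$ is used.

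\textbf{Step 3: conclude.} Hölder continuity at the origin of $\widehat{Dv}(y)=Dv(y/|y|^2)$ gives the limit $Dv(\infty)$ and the bound
\[
|Dv(x)-Dv(\infty)|\le C|x|^{-\alpha},\qquad \alpha=\min\{\gamma',\alpha_0-1-\epsilon\}.
\]
The dependence on $d$ enters through a cutoff near $\partial\Omega'$ and the interior estimates of Step 1.

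\textbf{Main obstacle.} The hardest point is Step 2: deriving rigorously the first-order elliptic system for $Dv$ with only measurable coefficients, showing that the inversion preserves uniform ellipticity (quasiconformal distortion is invariant under Möbius maps), and proving removability of the isolated singularity together with the Hölder estimate including the forcing. I would first try the Bers--Nirenberg representation $\partial_{\bar z}\phi=\mu\,\partial_z\phi+\nu\,\overline{\partial_z\phi}+F$ for the complex gradient $\phi=v_1-iv_2$. I would then apply the Hölder estimates for $L^p$-forced Beltrami equations on the punctured disk, using that $\phi$ is bounded.
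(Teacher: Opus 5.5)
The paper contains no proof of this statement. It is quoted from \cite{BJ2026} and used only as a black box in Theorem~\ref{wdecay}, where it is applied to difference quotients $v_h$ whose right-hand side is bounded \emph{in absolute value} by $C|x|^{-\beta-1}$. So there is no in-paper argument to compare with. Judged on its own, your route is the classical two-dimensional mechanism and proves the statement: view the complex gradient as a solution of an $\mathbb R$-linear Beltrami equation, invert, remove the isolated singularity, and get H\"older continuity at the puncture. One correction is needed.

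\textbf{The integrability threshold is misstated.} Write $T=a_{11}+a_{22}$ and $\kappa=a_{11}-a_{22}+2\sqrt{-1}\,a_{12}$. Then \eqref{leq} for $\phi=v_1-\sqrt{-1}\,v_2$ reads $\phi_{\bar z}=-\frac{\kappa}{2T}\phi_z-\frac{\bar\kappa}{2T}\overline{\phi_z}+\frac fT$, with $|\mu|+|\nu|=\frac{|\kappa|}{T}\le\frac{\Lambda-\lambda}{\Lambda+\lambda}=:k<1$. For $\psi(w)=\phi(1/w)$ the coefficient moduli are unchanged, and the forcing becomes $\tilde F(w)=-\bar w^{-2}(f/T)(1/w)=O(|w|^{\alpha_0-2})$. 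H\"older estimates for such equations need $\tilde F\in L^p$ with $p>2$, not $p>1$; even $\psi_{\bar w}=\tilde F\in L^2$ does not force continuity. Now $|w|^{\alpha_0-2}\in L^p_{\mathrm{loc}}$ for some $p>2$ exactly when $\alpha_0>1$, which gives any exponent $\alpha<\min\{\gamma(k),\alpha_0-1\}$. That is where the hypothesis enters.

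\textbf{Removability is short, not the main obstacle.} The same condition $\alpha_0>1$ gives $\tilde F\in L^2$, which is what removability needs. From $|\psi_{\bar w}|\le k|\psi_w|+|\tilde F|$ you get $|D\psi|^2\le C(J_\psi+|\tilde F|^2)$. Integrate $\eta^2J_\psi$ by parts, since the Jacobian is a null Lagrangian. Using $|\psi|\le M$ and a logarithmic cutoff at $0$ (a point has zero capacity in the plane), this gives $D\psi\in L^2$ near $0$. Hence the equation holds across the puncture and Morrey's Dirichlet-growth lemma applies.

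\textbf{The one-sided hypothesis.} Reading it as $|f|\le C|x|^{-\alpha_0}$ is correct and necessary. For example, $v(x)=-\int_0^{x_1}\arctan t\,dt$ has $|Dv|\le\pi/2$ and $\Delta v=-(1+x_1^2)^{-1}\le 0$. Yet $Dv\to(\mp\pi/2,0)$ along $\pm e_1$, so $Dv$ has no limit.

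\textbf{The annular inequality.} Your first target does follow from the Harnack inequality. Each $v_k$ solves a uniformly elliptic divergence-form equation; for $v_1$ this is $\partial_1\bigl(\tfrac{a_{11}}{a_{22}}v_{11}+\tfrac{2a_{12}}{a_{22}}v_{12}\bigr)+\partial_2 v_{12}=\partial_1(f/a_{22})$. What it cannot do is give decay at infinity, because it only compares an annulus with a larger concentric one. The missing ingredient is that the point at infinity has zero capacity, which the inversion supplies, so switching to it was right.

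\textbf{Step~1 is unnecessary.} On the bounded set $\{x\notin\Omega',\ |x|\le R_0\}$ the estimate is trivial from $|Dv|\le M$, and $d$ plays no role. The price is that $C$ depends on $R_0$, hence on $\Omega$, and on the constant in the bound for $f$. The statement as quoted suppresses this dependence.
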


	We shall use the theorem above to prove the following result.
\begin{thm}\label{wdecay}
	Let $u$ be a $C_{\mathrm{loc}}^2$ solution of 
	\begin{equation}\label{thm2.10eq}
		G(D^2u)=f(x),\quad x\in \mathbb{R}^2\setminus\overline{\Omega},
	\end{equation}
	where $G\in C^2$ is a fully nonlinear uniformly elliptic operator with ellipticity constants $\lambda$ and $\Lambda$, and $\Omega$ is a bounded domain in $\mathbb{R}^2$.
	Suppose that $\|D^2u\|_{L^{\infty}(\mathbb{R}^2\setminus\overline{\Omega})}\leq M$ and that $f$ satisfies \eqref{Ducond} with $\beta>0$. Then there exists a matrix $A$ such that $G(A)=0$ and
	\[
D^2u=A+O(|x|^{-\varepsilon})\qquad \text{as } |x|\to\infty,
\]
	which implies
	\begin{equation}\label{gru}
		\left|u(x) - \frac{1}{2}x^\mathsf{T} Ax\right| \leq C|x|^{2-\varepsilon}, \qquad |x|\geq R_0,
	\end{equation}
	where $\varepsilon \in (0, 1)$ and $C$ are positive constants depending only on $\lambda$, $\Lambda$, $\beta$, and $M$, and $R_0$ is sufficiently large.
\end{thm}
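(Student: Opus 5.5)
The plan is to redo, in dimension two, the argument behind Theorem~\ref{prop:quantitative_hessian}. That theorem was written for $n\ge3$ and a concave $F$; here the concavity is replaced by the two-dimensional structure.

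\textbf{Step 1: qualitative convergence.} For a fixed direction $e$, take the scale-dependent second difference $w_R$ from \eqref{eq:wR_definition}, with $h_R=R^{1-\sigma}$ and $0<\sigma<\min\{\beta,1\}$. In two dimensions the Evans--Krylov step needs no concavity. Nirenberg's $C^{2,\alpha}$ estimate for uniformly elliptic $G$ in the plane gives the scaled bound \eqref{eq:large_scale_holder}, namely $[D^2u]_{C^\alpha(B_{cR}(z))}\le CR^{-\alpha}$, after rescaling $u$ on balls of radius comparable to $|z|$ and using the bounded right-hand side. This yields \eqref{eq:wR_approximation}.

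Instead of the one-sided inequality from concavity, I would write an exact linear equation for $w_R$:
\[
\tilde a^{ij}_R(x)\,\bigl(u(x+h_Re)-u(x)\bigr)_{ij}=f(x+h_Re)-f(x),\qquad
\tilde a_R^{ij}=\int_0^1 G^{ij}\bigl(tD^2u(x+h_Re)+(1-t)D^2u(x)\bigr)dt.
\]
Dividing by $h_R$, the first difference quotient $\delta_R u=(u(x+h_Re)-u(x))/h_R$ solves a uniformly elliptic equation. Its right-hand side is bounded by $CR^{-\beta-1}$ by \eqref{Ducond}.

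I would then apply Theorem~\ref{cadu}, or more precisely its local annular version (a weak Harnack / oscillation decay on dyadic annuli), to derivatives of $u$. Since $u_e$ itself solves $\bar a^{ij}(u_e)_{ij}=f_e$ in the weak sense with $|f_e|\le C|x|^{-\beta-1}$, and $\beta+1>1$, Theorem~\ref{cadu} applies directly to $v=u_e$ with $\alpha_0=\beta+1$. Here $Dv=D^2u$ is bounded by $M$. This is the key simplification in 2D. The Lipschitz hypothesis on $f$ is exactly what makes $f_e\in L^\infty$ with the required decay, and the divergence-free structure of the 2D linearization is not needed because Theorem~\ref{cadu} is stated in nondivergence form. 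Note that $u\in C^{2,\alpha}$ and $f$ Lipschitz give $u_e\in W^{2,p}_{\mathrm{loc}}$, a strong solution, so the hypothesis $v\in C^2$ must be relaxed to $W^{2,p}$ or handled by mollification. Theorem~\ref{cadu} then gives $Du_e(x)\to Du_e(\infty)$ with rate $C|x|^{-\varepsilon}$, where $\varepsilon=\varepsilon(\lambda,\Lambda,\beta)$. Applying this for $e=e_1,e_2$ gives $D^2u=A+O(|x|^{-\varepsilon})$.

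\textbf{Step 2: identify $A$.} Pass to the limit in $G(D^2u)=f$ and use $f\to0$ to get $G(A)=0$.

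\textbf{Step 3: integrate to get \eqref{gru}.} Set $\phi=u-\frac12x^{\mathsf T}Ax$, so $|D^2\phi|\le C|x|^{-\varepsilon}$. Integrate along rays from $|x|=R_0$ to get $|D\phi(x)|\le C|x|^{1-\varepsilon}$. Integrate again to get $|\phi(x)|\le C|x|^{2-\varepsilon}$, where the bounded contributions on $\partial B_{R_0}$ are absorbed for $|x|\ge R_0$ and $\varepsilon<1$. To avoid the obstacle $\Omega$, use paths consisting of radial segments plus arcs of $\partial B_{R_0}$.

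\textbf{Main obstacle.} The hardest point is justifying the use of Theorem~\ref{cadu} for $u_e$ when $f$ is only Lipschitz. If that regularity issue cannot be settled, the fallback is the difference-quotient version. Apply Theorem~\ref{cadu}'s oscillation decay to $\delta_Ru$ on annuli $\{R\le|x|\le LR\}$; this gives $\operatorname{osc}D\delta_R u\le C(L^{-\alpha}\operatorname{osc}+R\cdot R^{-\beta-1})$ scale by scale. Then convert back to $D^2u$ using \eqref{eq:wR_approximation}, with errors $R^{-\sigma\alpha}$, and iterate exactly as in the proof of Theorem~\ref{prop:quantitative_hessian}. This produces any exponent $\varepsilon<\min\{\tau,\kappa\}$.
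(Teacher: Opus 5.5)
Your core idea matches the paper's: apply Theorem~\ref{cadu} to a first derivative of $u$ in direction $e_k$. The bound $|Dv|\le M$ comes from the Hessian bound, and $\alpha_0=\beta+1>1$ comes from the Lipschitz part of \eqref{Ducond}. Your Steps 2 and 3 are also what the paper does.

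The gap is the step you flag yourself as the main obstacle, which you leave unresolved. It is exactly the step the proof has to supply. Theorem~\ref{cadu} is stated for $C^2$ classical solutions. But $u_e$ is at best a $W^{2,p}_{\mathrm{loc}}$ strong (a.e., not weak) solution of $G^{ij}(D^2u)(u_e)_{ij}=f_e$. Neither remedy you name is carried out:
\begin{itemize}
\item Relaxing $C^2$ to $W^{2,p}$ means reproving Theorem~\ref{cadu}.
\item Mollifying $u_e$ produces a commutator $(G^{ij}(D^2u)(u_e)_{ij})\ast\rho_\epsilon-G^{ij}(D^2u)(u_e\ast\rho_\epsilon)_{ij}$ with merely continuous coefficients. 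Its decay like $|x|^{-\alpha_0}$, uniformly in $\epsilon$, would itself need proof.
\item Your fallback, with the scale-dependent step $h_R=R^{1-\sigma}$ and an annular iteration, relies on an annular oscillation-decay form of Theorem~\ref{cadu} that is not available as stated. It is also much heavier than needed. In the paper, the scale-dependent quotient is the device for $n\ge3$ (Theorem~\ref{prop:quantitative_hessian}), where no two-dimensional gradient estimate exists.
\end{itemize}

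The missing idea is simple: do not tie $h$ to the scale. You were one step from it when you wrote the exact equation for the first difference quotient. Fix $0<h<h_0$ and set $v_h(x)=(u(x+he_k)-u(x))/h$.
\begin{itemize}
\item $v_h$ is genuinely $C^2$ outside a fixed bounded set $\Omega_*\supset\Omega\cup(\Omega-he_k)$.
\item It solves classically $a^h_{ij}(v_h)_{ij}=(f(x+he_k)-f(x))/h$, where $a^h_{ij}=\int_0^1G_{M_{ij}}((1-s)D^2u(x)+sD^2u(x+he_k))\,ds$ has ellipticity constants $\lambda,\Lambda$.
\item $|Dv_h|\le M$.
\item The right-hand side is bounded by $C|x|^{-\beta-1}$ for large $|x|$, because $x+he_k\in B_{|x|/2}(x)$.
\end{itemize}
All of this is uniform in $h$. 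So Theorem~\ref{cadu} yields $|Dv_h(x)-p_{k,h}|\le C|x|^{-\varepsilon}$ with $C,\varepsilon$ independent of $h$.

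Now pass to the limit. We have $|p_{k,h}|\le M$, and $Dv_h(x)\to Du_k(x)$ pointwise as $h\to0$ (this is where $u\in C^2_{\mathrm{loc}}$ is used). A subsequence therefore gives $|Du_k(x)-p_k|\le C|x|^{-\varepsilon}$. The vector $p_k$ does not depend on the subsequence, since two such limits differ by at most $C|x|^{-\varepsilon}$ for every large $x$. This is the paper's argument. It needs no Nirenberg $C^{2,\alpha}$ estimate, no differentiation of the equation, and no iteration.
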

\begin{proof}
	Fix \(k=1,2\) and choose \(h_0>0\) sufficiently small. For \(0<h<h_0\), define the
	difference quotient
	\[
	v_h(x)=\frac{u(x+he_k)-u(x)}{h},
	\]
	where \(e_k\) denotes the \(k\)-th coordinate vector in \(\mathbb R^2\).
	
	We first fix a bounded domain \(\Omega_*\) such that, for every \(0<h<h_0\),
	\[
	\Omega\cup(\Omega-he_k)\subset \Omega_*,
	\]
	and such that whenever \(x\in \mathbb R^2\setminus\Omega_*\), the segment
	\[
	\{x+the_k:0\le t\le h\}
	\]
	lies in \(\mathbb R^2\setminus\Omega\). Thus \(v_h\) is well defined and smooth in
	\(\mathbb R^2\setminus\overline{\Omega_*}\).
	
	Taking the difference of equation~\eqref{thm2.10eq}, we obtain
	\[
	\frac{
		G(D^2u(x+he_k))-G(D^2u(x))
	}{h}
	=
	\frac{f(x+he_k)-f(x)}{h}.
	\]
	By the mean value formula in the matrix variable, \(v_h\) satisfies
	\[
	a_{ij}^h(x)(v_h)_{ij}(x)
	=
	\frac{f(x+he_k)-f(x)}{h},
	\qquad
	x\in\mathbb R^2\setminus\overline{\Omega_*},
	\]
	where
	\[
	a_{ij}^h(x)
	=
	\int_0^1
	G_{M_{ij}}
	\bigl(
	(1-s)D^2u(x)+sD^2u(x+he_k)
	\bigr)\,ds.
	\]
	Since \(G\) is uniformly elliptic, the coefficients \(a_{ij}^h\) are uniformly
	elliptic with the same ellipticity constants \(\lambda,\Lambda\), independently
	of \(h\).
	
	Moreover, since
	\[
	\|D^2u\|_{L^\infty(\mathbb R^2\setminus\overline{\Omega})}\le M,
	\]
	we have
	\[
	|Dv_h(x)|
	=
	\left|
	\frac{Du(x+he_k)-Du(x)}{h}
	\right|
	\le M,
	\qquad
	x\in\mathbb R^2\setminus\overline{\Omega_*}.
	\]
	This bound is also independent of \(h\).
	
	Next, by the decay assumption on \(f\), for sufficiently large \(|x|\),
	\[
	[f]_{C^{0,1}(B_{|x|/2}(x))}
	\le C|x|^{-\beta-1}.
	\]
	For such \(x\) and \(0<h<h_0\), we have \(x+he_k\in B_{|x|/2}(x)\). Hence
	\[
	\left|
	\frac{f(x+he_k)-f(x)}{h}
	\right|
	\le
	[f]_{C^{0,1}(B_{|x|/2}(x))}
	\le
	C|x|^{-\beta-1}.
	\]
	Since \(\beta+1>1\), the right-hand side satisfies the decay condition required in
	Theorem~\ref{cadu}.
	
	Applying Theorem~\ref{cadu} to \(v_h\) in
	\(\mathbb R^2\setminus\overline{\Omega_*}\), we obtain a vector
	\[
	p_{k,h}:=Dv_h(\infty)\in\mathbb R^2
	\]
	and constants \(C>0\), \(\varepsilon\in(0,1)\), and \(R_1>0\), independent of
	\(h\), such that
	\[
	|Dv_h(x)-p_{k,h}|
	\le C|x|^{-\varepsilon},
	\qquad |x|\ge R_1.
	\]
	Here \(\varepsilon\) depends only on \(\lambda,\Lambda\), and \(\beta\), while
	\(C\) is independent of \(h\).
	
	We now pass to the limit as \(h\to0\). Since \(|Dv_h|\le M\), we have
	\[
	|p_{k,h}|\le M.
	\]
	Thus, for every sequence \(h_j\to0\), there exists a subsequence, still denoted by
	\(h_j\), such that
	\[
	p_{k,h_j}\to p_k
	\]
	for some \(p_k\in\mathbb R^2\).
	
	For every fixed \(x\in\mathbb R^2\setminus\overline{\Omega_*}\), since \(u\in C_{\mathrm{loc}}^2\),
	\[
	Dv_{h_j}(x)
	=
	\frac{Du(x+h_je_k)-Du(x)}{h_j}
	\to
	Du_k(x)
	\qquad \text{as } j\to\infty.
	\]
	Passing to the limit in
	\[
	|Dv_{h_j}(x)-p_{k,h_j}|
	\le C|x|^{-\varepsilon},
	\]
	we obtain
	\[
	|Du_k(x)-p_k|
	\le C|x|^{-\varepsilon},
	\qquad |x|\ge R_1.
	\]
	
	The vector \(p_k\) is independent of the chosen subsequence. Indeed, if another
	subsequence gave a limit \(p_k'\), then both \(p_k\) and \(p_k'\) would satisfy
	\[
	|Du_k(x)-p_k|\le C|x|^{-\varepsilon},
	\qquad
	|Du_k(x)-p_k'|\le C|x|^{-\varepsilon}.
	\]
	Therefore
	\[
	|p_k-p_k'|
	\le
	|p_k-Du_k(x)|+|Du_k(x)-p_k'|
	\le
	C|x|^{-\varepsilon}.
	\]
	Letting \(|x|\to\infty\), we obtain \(p_k=p_k'\). Hence the full family
	\(p_{k,h}\) converges to \(p_k\), and
	\[
	|Du_k(x)-p_k|
	\le C|x|^{-\varepsilon},
	\qquad |x|\ge R_1.
	\]
	
	Since \(k=1,2\) was arbitrary, define the constant matrix \(A=(A_{ik})\) by
	\[
	A_{ik}:=(p_k)_i.
	\]
	Then
	\[
	|D^2u(x)-A|\le C|x|^{-\varepsilon},
	\qquad |x|\ge R_1.
	\]
  In particular,
	\[
	D^2u(x)\to A
	\qquad \text{as } |x|\to\infty.
	\]
	Since \(f(x)\to0\) and \(G(D^2u(x))=f(x)\), passing to the limit gives
	\[
	G(A)=0.
	\]
	
	The growth estimate \eqref{gru} follows from the fundamental theorem of calculus.
	This completes the proof.
\end{proof}

We next derive higher-order asymptotic behavior for the supercritical Lagrangian mean curvature equation.

\begin{lem}\label{ab1}
	Assume the hypotheses of Theorem~\ref{D2ue}. Suppose that there exists $A\in \mathcal{A}$ such that
	\begin{equation}\label{D2uee}
		\left|u(x)-\frac{1}{2}x^\mathsf{T}Ax\right|\leq C_1|x|^{2-\varepsilon}\quad |x|\geq R_0,
	\end{equation}
	for some constants $\varepsilon$ and $C_1$. Assume also that $f$ satisfies \eqref{Ducond} for some constant $\beta>0$. Set
	\[
	w(x):=u(x)-\frac{1}{2}x^{\mathsf{T}}Ax.
	\]
	Then there exist constants $C(\theta,R_0,\varepsilon,C_1,A,\beta)>0$ and $R_1(\theta,R_0,\varepsilon,A,C_1,\beta)>R_0$ such that for any $\alpha\in(0,1)$,
	\begin{equation}\label{Dkuee}
		|D^k w(x)| \leq C|x|^{2-k-\varepsilon_\beta} \; \text{and} \; \frac{|D^{2}w(x_1)-D^{2}w(x_2)|}{|x_1-x_2|^\alpha} \leq C|x_1|^{-\varepsilon_\beta-\alpha}
	\end{equation}
	hold for all $|x|>R_1$, $k=0,1,2$, and $|x_1|>R_1$, $x_2\in B_{|x_1|/2}(x_1)$. Here $\varepsilon_\beta=\min\{\varepsilon,\beta\}$.
\end{lem}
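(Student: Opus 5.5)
This follows from a standard rescaling argument combined with interior $C^{2,\alpha}$ estimates for the concave, uniformly elliptic equation satisfied by $w$. Fix $x_0$ with $|x_0|=2R$ large. Set
\[
w_R(y):=\frac{w(x_0+Ry)}{R^2},\qquad y\in B_1,
\]
so that \eqref{D2uee} gives $\|w_R\|_{L^\infty(B_1)}\le CR^{-\varepsilon}$. Since $F(A)=\theta$, $w_R$ solves
\[
F(A+D^2w_R(y))-F(A)=f(x_0+Ry)=:f_R(y)\quad\text{in } B_1 .
\]
By \eqref{Ducond} (after the Lewy rotation this is Lemma~\ref{abf11}), we have $\|f_R\|_{L^\infty(B_1)}\le CR^{-\beta}$ and $[f_R]_{C^{0,1}(B_1)}\le R\cdot CR^{-\beta-1}=CR^{-\beta}$. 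Hence $\|f_R\|_{C^{\alpha}(B_1)}\le CR^{-\beta}$ for every $\alpha\in(0,1)$.

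\textbf{Step 1: uniform $C^{2,\alpha}$ bounds.} Because $D^2u$ is bounded (it converges to $A$ by Proposition~\ref{rtp}(iii) or Theorem~\ref{prop:quantitative_hessian}), the operator $G(M):=F(A+M)-\theta$ is concave, since the modified $F$ is concave, and uniformly elliptic along $D^2w_R$. Evans--Krylov with a $C^\alpha$ right-hand side, together with the interior Hessian estimate of \cite{Z2025} to justify $C^{2,\alpha}$ regularity of the viscosity solution, gives
\[
\|w_R\|_{C^{2,\alpha}(B_{1/2})}\le C\bigl(\|w_R\|_{L^\infty(B_1)}+\|f_R\|_{C^\alpha(B_1)}+1\bigr)\le C.
\]
This bound is not yet small, so a second step is needed.

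\textbf{Step 2: linearize and apply Schauder.} Write
\[
a^{ij}_R(y)=\int_0^1F^{ij}(A+tD^2w_R(y))\,dt .
\]
Then $a^{ij}_R D_{ij}w_R=f_R$. By Step~1 and the smoothness of $F$ on the bounded set of Hessians involved, $\|a^{ij}_R\|_{C^\alpha(B_{1/2})}\le C$ uniformly in $R$, and the coefficients are uniformly elliptic. The interior Schauder estimate then yields
\[
\|w_R\|_{C^{2,\alpha}(B_{1/4})}\le C\bigl(\|w_R\|_{L^\infty(B_{1/2})}+\|f_R\|_{C^\alpha(B_{1/2})}\bigr)\le C\bigl(R^{-\varepsilon}+R^{-\beta}\bigr)\le CR^{-\varepsilon_\beta}.
\]

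\textbf{Step 3: scale back.} Since $D^kw(x_0+Ry)=R^{2-k}D^kw_R(y)$, evaluating at $y=0$ gives $|D^kw(x_0)|\le C|x_0|^{2-k-\varepsilon_\beta}$ for $k=0,1,2$. The H\"older seminorm scales as
\[
[D^2w]_{C^\alpha(B_{R/4}(x_0))}=R^{-\alpha}[D^2w_R]_{C^\alpha(B_{1/4})}\le CR^{-\varepsilon_\beta-\alpha}.
\]
For $x_2\in B_{|x_1|/2}(x_1)$, two cases remain. If $|x_1-x_2|\le|x_1|/8$, cover the pair by one such ball. Otherwise use
\[
|D^2w(x_1)-D^2w(x_2)|\le 2\sup_{B_{|x_1|/2}(x_1)}|D^2w|\le C|x_1|^{-\varepsilon_\beta},
\]
and divide by $|x_1-x_2|^\alpha\ge c|x_1|^\alpha$. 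Note that every point of $B_{|x_1|/2}(x_1)$ has modulus comparable to $|x_1|$, so the pointwise bound applies there.

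\textbf{Main obstacle.} The step needing the most care is Step~1: the regularity input for a viscosity solution. One must have concavity and uniform ellipticity of the (modified) operator on the range of $D^2u$, which comes from the supercritical phase and the Lewy rotation, together with the Lipschitz, hence $C^\alpha$, control of $f$ at scale $R$ supplied by \eqref{Ducond}. Without the first, Evans--Krylov does not apply; without the second, the right-hand side of the rescaled equation would not be uniformly $C^\alpha$-small. Once the uniform $C^{2,\alpha}$ bound is available, Steps~2 and~3 are routine.
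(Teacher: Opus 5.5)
Your proposal is correct and follows the paper's proof essentially step for step. You rescale on balls of radius comparable to $|x|$, use the growth hypothesis to get $\|w_R\|_{L^\infty}\le CR^{-\varepsilon}$ and \eqref{Ducond} to get $C^{0,1}$-smallness of $f_R$, obtain uniform $C^{2,\alpha}$ bounds from Zhou's Hessian estimate plus Evans--Krylov, apply Schauder to the linearized equation with coefficients $\int_0^1 F^{ij}(A+tD^2w_R)\,dt$, and scale back.

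One small difference is where the Hessian bound comes from. The paper gets it directly from Zhou's estimate applied to the rescalings $u_R$, which are bounded by the growth hypothesis; this is cleaner than citing Proposition~\ref{rtp}(iii) or Theorem~\ref{prop:quantitative_hessian}. Also, your explicit two-case treatment of the H\"older quotient on $B_{|x_1|/2}(x_1)$ fills in what the paper leaves as ``scaling back.''
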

\begin{proof}
	For sufficiently large \( R := |x| > 1 \), set
	\[
	u_R(y) := \left( \frac{4}{R} \right)^2 u \left( x + \frac{R}{4} y \right)
	\]
	and
	\[
	w_R(y) := \left( \frac{4}{R} \right)^2 w \left( x + \frac{R}{4} y \right)
	\]
	in \( B_2 \).
	
	It follows from \eqref{D2uee} that
	\[
	\max_{y \in B_2} |u_R(y)| \leq \frac{16}{R^2} \max_{z \in B_{3R} \setminus B_{R/2}} |u(z)| \leq C
	\]
	for some \( C > 0 \) independent of \( R \) and
	\[
	\|w_R\|_{L^\infty(B_2)} \leq \frac{16}{R^2} \left\| u - \frac{1}{2}x^{\mathsf{T}}Ax \right\|_{L^\infty(B_{3R} \setminus B_{R/2})} = O(R^{-\varepsilon}).
	\]
	
	Then
	\[
	F(D^2 u_R(y)) = F \left( D^2 u \left( x + \frac{R}{4} y \right) \right) =\theta+ f \left( x + \frac{R}{4} y \right) =: f_R(y).
	\]
	
	By a direct computation and condition~\eqref{Ducond},
	\[
	\| f_R - \theta \|_{C^{0,1}(B_2)} \leq CR^{-\beta}
	\]
	for some positive constant $C$ independent of $R$. Hence, by the Hessian estimate in \cite{Z2025}, there exists $C$ independent of $R$ such that 
	\[\|u_{R}\|_{C^2(B_1)}\leq C\quad \text{and}\quad \|w_{R}\|_{C^2(B_1)}\leq C.\]
	Consequently, $F$ is uniformly elliptic along all $u_R$ and is concave in the level-set sense \cite{CPW2017}. By the Evans--Krylov estimate and the Schauder theory, for any $0 < \alpha < 1$, we have
	\[\|u_{R}\|_{C^{2,\alpha}(B_{\frac{1}{2}})}\leq C.\]
	
	The difference between \eqref{eq} and $F(A) = \theta$ gives
	\[\tilde{a}_{ij}^R \partial_{ij} w_R = f_{R}(y) - \theta=O\left(R^{-\beta}\right)\]
	where $\tilde{a}_{ij}^R(y) = \int_0^1 F_{M_{ij}}\left(tD^2w_R(y) + A\right)dt$. 
	
	Since $\tilde{a}_{ij}^R$ and $f_R$ are bounded in the $C^\alpha$ norm and $\frac{1}{C}\leq \tilde{a}_{ij}^R\leq C$, the Schauder estimate gives
	\begin{equation*}
		\|w_R\|_{C^{2,\alpha}(B_{\frac{1}{4}})}\leq C\left(\|w_R\|_{L^\infty(B_{\frac{1}{2}})}+\|f_{R}-\theta\|_{C^\alpha(B_{\frac{1}{2}})}\right)\leq CR^{-\varepsilon_\beta}.
	\end{equation*}
	Scaling back gives \eqref{Dkuee}.
\end{proof}

The following lemma gives the asymptotic behavior of harmonic functions at infinity; we refer the reader to \cite{BLZ2015,LL24,QB20252n} for details.
\begin{lem}\label{hm2}
	Let $u$ be a smooth solution of
	\[
\Delta u(x)=0,\qquad x\in \mathbb{R}^n\setminus\overline{B}_1(0),
\]
	and suppose that $u(x)=O(|x|^{\xi})$ as $|x|\to\infty$ for some $\xi<2$. Then
	\begin{itemize}
		\item $ n=2$: 
		 \[u(x) = b^\mathsf{T} x + d \ln |x| + c + O_{k}(|x|^{-1}) \text{ as } |x| \to \infty,\]
		\item $ n\geq 3$:
		 \[u(x) = b^\mathsf{T} x + c + O_{k}(|x|^{2-n}) \text{ as } |x| \to \infty,\]
	\end{itemize}
	where $b \in \mathbb{R}^n$, $c,d \in \mathbb{R}$, and the estimates hold for every $k\in \mathbb{N}$. Moreover, $b=0$ provided $0 < \xi < 1$, $b=d=0$ provided $\xi=0$, and $b=c=d=0$ provided $-1<\xi < 0$.
\end{lem}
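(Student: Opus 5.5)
The plan is the classical route through Kelvin transform and spherical harmonics. First reduce the growth: since $u=O(|x|^\xi)$ with $\xi<2$, the interior gradient and Hessian estimates for harmonic functions on balls $B_{|x|/2}(x)$ give $|D^ku(x)|\le C|x|^{\xi-k}$. In particular $D^2u\to0$, so there is no quadratic part. Next expand $u$ on each sphere $\partial B_r$, $r>1$, in spherical harmonics: $u(r\omega)=\sum_{l\ge0}\sum_m a_{lm}(r)Y_{lm}(\omega)$. Harmonicity forces each coefficient to solve the radial ODE. Hence $a_{lm}(r)=\alpha_{lm}r^l+\beta_{lm}r^{2-n-l}$ for $n\ge3$. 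For $n=2$ the same holds, except that $l=0$ gives $\alpha_0+d\ln r$.

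The growth bound kills the growing modes. From $|a_{lm}(r)|\le \|u\|_{L^2(\partial B_r)}/|\partial B_r|^{1/2}\le Cr^{\xi}$ and $\xi<2$, we get $\alpha_{lm}=0$ for all $l\ge2$. The remaining modes $l=0,1$ with positive or zero powers are $b^{\mathsf T}x+c$, together with $d\ln|x|$ when $n=2$.

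For the decaying remainder $\phi=u-b^{\mathsf T}x-c-d\ln|x|$, I would avoid summing the series directly and use a Kelvin transform instead. The function $\phi$ is harmonic outside $B_1$ and is $O(|x|^{2-n})$ in the $L^2$-on-spheres sense. Its Kelvin transform $\phi^*(y)=|y|^{2-n}\phi(y/|y|^2)$ is therefore harmonic and bounded in $B_1\setminus\{0\}$, so the singularity is removable. For $n=2$, use $\phi^*(y)=\phi(y/|y|^2)$; after subtracting the log term it is bounded, hence removable, and it vanishes at $0$ because the constant has been removed.

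Smoothness of $\phi^*$ at $0$ then gives the expansion. When $n\ge3$, $\phi=|x|^{2-n}\phi^*(x/|x|^2)$, and differentiating with $\phi^*$ smooth gives $\phi=O_k(|x|^{2-n})$ for all $k$. When $n=2$, $\phi^*(0)=0$ gives $\phi=O_k(|x|^{-1})$. The main technical point is turning the $L^2$-on-spheres bounds into pointwise bounds to justify removability. I would do this with the mean value property on balls $B_{|x|/2}(x)$, which converts the $O(r^\xi)$ pointwise hypothesis into pointwise bounds on each Fourier coefficient.

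The coefficient vanishing claims follow by comparing growth rates. If $\xi<1$, the linear term $b^{\mathsf T}x$ would dominate, so $b=0$. If $\xi=0$, $u$ is bounded, so $d=0$ as well, since $\ln|x|$ is unbounded. If $\xi<0$, $u\to0$, so $c=0$ too, while $O(|x|^{-1})$ and $O(|x|^{2-n})$ remain compatible with the decay.
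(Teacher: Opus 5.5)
Your proof is correct and is the classical argument: the spherical-harmonic expansion discards the modes $l\ge 2$ and isolates $b^{\mathsf T}x+c$ (plus $d\ln|x|$ when $n=2$), and the decaying remainder is handled by the Kelvin transform and removability. The paper gives no proof of its own here and simply refers to \cite{BLZ2015,LL24,QB20252n} for this standard fact.

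One point to tidy is your final technical remark. The mean value property on $B_{|x|/2}(x)$ should be applied to the remainder $\phi$: combined with Cauchy--Schwarz on the spheres $\partial B_\rho$, $|x|/2\le\rho\le 3|x|/2$, it turns the sphere-wise $L^2$ bound into $|\phi(x)|\le C|x|^{2-n}$, and into $C|x|^{-1}$ when $n=2$, since only modes $l\ge 1$ survive. The direction you actually stated, from the pointwise hypothesis to bounds on the Fourier coefficients, is trivial and is not what removability needs.
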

	
Using Lemma~\ref{ab1}, we next improve the decay rate.	
The preliminary convergence input is taken from \cite{BJ2026}, and the following fractional-Laplacian argument is similar to \cite[Lemma~3.2]{QB2025}; we state the result as follows:
\begin{lem}\label{ab2}
	Assume the same hypotheses as in Lemma~\ref{ab1}, and let $R_1$ be the large constant
	determined in the proof of Lemma~\ref{ab1}. If, in addition, $2\varepsilon < 1$, then for $n \geq 2$ and $u\in C_{\mathrm{loc}}^{2,\alpha}(\mathbb{R}^n\setminus \overline{B}_{R_1})$, the estimates
	\begin{equation*}
		|D^kw(x)|\leq C|x|^{2-2\varepsilon-k}\;\text{and} \; 	\frac{|D^{2}w(x_1)-D^{2}w(x_2)|}{|x_1-x_2|^\alpha}\leq C|x_1|^{-2\varepsilon-\alpha}
	\end{equation*}
	hold for all $|x|>2R_1$, $k = 0,1,2$, and $|x_1| > 2R_1$, $x_2 \in B_{|x_1|/2}(x_1)$. 
\end{lem}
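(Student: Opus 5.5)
We will improve the decay of $w=u-\frac12x^{\mathsf T}Ax$ by one more factor $|x|^{-\varepsilon}$, exploiting the fact that the nonlinear remainder is quadratic in $D^2w$.

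\textbf{Step 1: splitting the equation.} Write the equation as
\[
F^{ij}(A)w_{ij}=f(x)-Q(x),\qquad Q(x):=F(A+D^2w)-F(A)-F^{ij}(A)w_{ij}.
\]
By smoothness of $F$ near $A$ and Lemma~\ref{ab1}, $|Q(x)|\le C|D^2w|^2\le C|x|^{-2\varepsilon}$, using $\varepsilon_\beta=\varepsilon$ when $\varepsilon\le\beta$ (otherwise the $\beta$-rate is better and we only need $\min\{2\varepsilon,\beta\}$, as in the statement). Lemma~\ref{ab1} also gives
\[
[Q]_{C^{0,\alpha}(B_{|x|/2}(x))}\le C|x|^{-2\varepsilon-\alpha}.
\]
Hence the right-hand side $g:=f-Q$ satisfies $|g|\le C|x|^{-2\varepsilon}$, with the corresponding H\"older seminorm bound on $B_{|x|/2}(x)$. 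After the linear change of variables $x\mapsto F^{ij}(A)^{-1/2}x$ we may assume the operator is $\Delta$.

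\textbf{Step 2: a particular solution.} Extend $g$ by a cutoff to a function on $\mathbb R^n$ supported in $|x|>R_1$, still with $|g|\le C(1+|x|)^{-2\varepsilon}$. Since $2\varepsilon<1<2$, the Riesz-potential (fractional Laplacian) construction used in \cite[Lemma~3.2]{QB2025} gives a solution. For $n\ge3$ take $v=-\Gamma*g$. For $n=2$, and in general to avoid divergence when $2\varepsilon\le 2$, take the renormalized kernel
\[
v(x)=\int \bigl(\Gamma(x-y)-\Gamma(-y)-x\cdot D\Gamma(-y)\chi_{|y|>1}\bigr)g(y)\,dy,
\]
or equivalently write $g=(-\Delta)(-\Delta)^{-1}g$ via $I_2$ applied to $g$. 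Splitting the integral into the regions $|y|<|x|/2$, $|y-x|<|x|/2$ and $|y|>2|x|$ gives the bound
\[
|v(x)|\le C|x|^{2-2\varepsilon}.
\]
Here the condition $2\varepsilon<1$ makes the far-field region converge after one subtraction and prevents resonance with the harmonic modes of degree $0$ and $1$. This estimate is the main obstacle: one must check carefully that each region contributes exactly $|x|^{2-2\varepsilon}$, with no logarithms, in both $n=2$ and $n\ge3$.

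\textbf{Step 3: comparison with harmonic functions and rescaling.} Then $w-v$ is harmonic in $\mathbb R^n\setminus\overline B_{R_1}$ and is $O(|x|^{2-\varepsilon})$. By Lemma~\ref{hm2} with $\xi=2-\varepsilon<2$, it equals $b^{\mathsf T}x+d\ln|x|+c+O(|x|^{-1})$ for $n=2$, and $b^{\mathsf T}x+c+O(|x|^{2-n})$ for $n\ge3$. Both are $O(|x|^{2-2\varepsilon})$, since $2-2\varepsilon>1$. Therefore
\[
|w|\le C|x|^{2-2\varepsilon}.
\]
Finally, repeat the rescaling argument of Lemma~\ref{ab1}: on $B_2$ set $w_R(y)=(4/R)^2w(x+\tfrac R4y)$, so that $\|w_R\|_{L^\infty}=O(R^{-2\varepsilon})$ and the right-hand side is $O(R^{-\beta})$. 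Interior Schauder estimates then give $\|w_R\|_{C^{2,\alpha}(B_{1/4})}\le CR^{-2\varepsilon}$ (more precisely $CR^{-\min\{2\varepsilon,\beta\}}$). Scaling back yields the derivative bounds for $k=0,1,2$ and the H\"older seminorm bound for $|x|>2R_1$.
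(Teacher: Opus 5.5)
Your argument is correct, but its key step differs from the route the paper indicates. The paper gives no separate proof of this lemma. It points to the fractional-Laplacian argument of \cite[Lemma~3.2]{QB2025}, whose template is Step~2 of the proof of Theorem~\ref{D2ue}:
\begin{enumerate}
\item apply $(-\Delta)^s$, $2s<\alpha$, to $\Delta w$ and write $(-\Delta)^s(\Delta w)=L$, where $L$ collects $(-\Delta)^s f$ and the commutator terms between $\tilde a_{ij}-\delta_{ij}$ and $\partial_{ij}w$ (this is where the H\"older bounds of Lemma~\ref{ab1} enter);
\item invert by the Riesz potential $H$;
\item control the $s$-harmonic remainder $\Delta w-H=O(|x|^{2s-n})$ by \cite[Lemma~2.1]{QB20252n};
\item only then build a particular solution by \cite[Lemma~1]{LB2023ANS} and conclude with Lemma~\ref{hm2} and the rescaling of Lemma~\ref{ab1}.
\end{enumerate}

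You skip the nonlocal layer entirely. Only $f$ itself, not its derivatives, appears on the right-hand side. So the pointwise identity $F^{ij}(A)w_{ij}=f-Q$ with $|Q|\le C|D^2w|^2$ already gives $|\Delta w|\le C|x|^{-\min\{\beta,2\varepsilon\}}$ after normalization. That is all the fractional detour delivers here, apart from a harmless $|x|^{2s-n}$ term. Your renormalized Newtonian kernel is simply \cite[Lemma~1]{LB2023ANS} written out.

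What each route buys: yours is shorter and needs only $F\in C^{1,1}$ near $A$ plus the pointwise bounds of Lemma~\ref{ab1}. It is also the same mechanism the paper itself uses in \eqref{w1}. The paper's version keeps the exposition parallel to \cite{QB2025} but yields nothing stronger for this lemma.

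Two small corrections.
\begin{itemize}
\item For $n\ge3$ the plain potential $-\Gamma*g$ diverges when $2\varepsilon<1$, since the integrand behaves like $|y|^{2-n-2\varepsilon}$ at infinity. So the kernel with both the constant and the linear subtraction is needed in every dimension, as you go on to say.
\item Your parenthetical about the case $\varepsilon>\beta$ is backwards: there the $\beta$-rate is the slower one. In all cases one only has $|g|\le C|x|^{-\min\{\beta,2\varepsilon\}}$. Hence the lemma as stated tacitly requires $2\varepsilon\le\beta$. This holds in every application in the paper, where $2\varepsilon<\min\{\beta,1\}$.
\end{itemize}
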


We now prove Theorem~\ref{D2ue}. The argument is divided into four steps; the nonlocal part of the proof is kept explicit because it is the key input for the improved rate.

\textbf{Step 1.} Consider the new potential \eqref{npeq}, which satisfies a uniformly elliptic equation with $F$ concave in the level-set sense. By Lemma~\ref{abf11} and Theorems~\ref{prop:quantitative_hessian} and~\ref{wdecay}, there exists a symmetric positive definite matrix $\tilde{A}$ with $F(\tilde{A}) = \tilde{\theta}$ such that
\[
D_{\tilde{x}}^2 \tilde{u}(\tilde{x}) = \tilde{A} + O(|\tilde{x}|^{-\varepsilon'}) \quad \text{as } |\tilde{x}| \to +\infty,
\]
where $\varepsilon' \in (0,1)$ is a constant depending only on $\theta$, $\beta$ and $\delta$.

Consequently, Lemmas~\ref{ab1} and~\ref{ab2} both apply for $\tilde{u}$. To obtain the limit of $D^2 u$, Proposition~\ref{rtp}(iii) implies that we need to determine the asymptotic behavior of $D_{\tilde{x}} \tilde{u}(\tilde{x})$.

In what follows, for simplicity, we write $u$ instead of $\tilde{u}$.

\textbf{Step 2. Determining the linear term.}

We may repeat Lemma~\ref{ab2} $n_0$ times so that $2^{n_0} \varepsilon < 1$ and $2^{n_0+1} \varepsilon > 1$ (after decreasing \(\varepsilon\), if necessary), provided that $\beta \geq 1$. Set $\varepsilon_1 := 2^{n_0} \varepsilon$; clearly $1 < 2\varepsilon_1 < 2$. Then, for all $|x| > 2^{2n_0} R_1$, $k = 0,1,2$, and for $|x_1| > 2^{2n_0} R_1$, $x_2 \in B_{|x_1|/2}(x_1)$, we have
\begin{equation}\label{abwn}
	|D^k w(x)| \le C |x|^{2 - \varepsilon_1 - k}, \qquad
	\frac{|D^2 w(x_1) - D^2 w(x_2)|}{|x_1 - x_2|^\alpha} \le C |x_1|^{-\varepsilon_1 - \alpha}.
\end{equation}

In particular, for $k = 1$ we obtain the desired asymptotic behavior of $D_{\tilde{x}} \tilde{u}(\tilde{x})$. Hence, by Proposition~\ref{rtp}(iii), there exists $A \in \mathcal{A}$ such that
\[
D^2 u \to A \quad \text{as } |x| \to +\infty.
\]
Moreover, $|D^2 u| \le C(\theta, f, u)$ is bounded. Applying Theorems~\ref{prop:quantitative_hessian} and~\ref{wdecay} again yields
\[
D^2 u(x) = A + O(|x|^{-\varepsilon}) \quad \text{as } |x| \to +\infty,
\]
where $\varepsilon \in (0,1)$ is a constant depending only on $\theta$, $\beta$, and $u$.

Notice that Lemmas~\ref{ab1}--\ref{ab2}, and the argument in Step~2 also apply to $u$; we will continue to refine the asymptotic expansion.

Following the notation of \cite{QB2025}, let $\tilde{a}_{ij}(x) = \int_0^1 F_{M_{ij}}\bigl(t D^2 w(x) + A\bigr) \, dt$, 
\[
\begin{aligned}
	L(x) &:= (-\Delta)^s f(x) - \partial_{ij} w(x) (-\Delta)^s \tilde{a}_{ij}(x) - \bigl(\tilde{a}_{ij}(x) - \delta_{ij}\bigr) (-\Delta)^s (\partial_{ij} w)(x) \\
	&\quad + c_{n,s} \int_{\mathbb{R}^n} \frac{\bigl( \tilde{a}_{ij}(x) - \tilde{a}_{ij}(y) \bigr) \bigl( \partial_{ij} w(x) - \partial_{ij} w(y) \bigr)}{|x - y|^{n+2s}} \, dy.
\end{aligned}
\]
and 
\[
H(x) := c_{n,-s} \int_{\mathbb{R}^n} \frac{L(y)}{|x - y|^{n-2s}} \, dy.
\]
Then we obtain
\[
(-\Delta)^s (\Delta w) = L, \quad L \in C^{\alpha - 2s}(\mathbb{R}^n),
\]
where $s\in(0,\alpha/2)$. Since $\beta>2$ and $2\varepsilon_1<2$, we obtain
\[
|L(x)| \le C |x|^{-2s - 2\varepsilon_1}, \quad |x| > 2^{2n_0+1} R_1,
\]
and $H$ is continuous on $\mathbb{R}^2$ satisfying
\[
(-\Delta)^s H(x) = L(x), \qquad |H(x)| \le C |x|^{-2\varepsilon_1}, \quad |x| > 2^{2n_0+1} R_1.
\]

Consequently,
\[
(-\Delta)^s (\Delta w - H) = 0, \quad |x| > 2^{2n_0+1} R_1.
\]
In view of \cite[Lemma~2.1]{QB20252n},
\[
|\Delta w(x) - H(x)| \le C |x|^{2s-n}, \quad |x| > 2^{2n_0+1} R_1.
\]
Since $1 < 2\varepsilon_1 < 2$ and $1 < n - 2s < n$, we can find a constant $\nu \in (0,1)$ such that
\[
|\Delta w(x)| \le C |x|^{-1 - \nu}, \quad |x| > 2^{2n_0+1} R_1.
\]

By \cite[Lemma~1]{LB2023ANS}, there exists $\overline{w}$ satisfying $\Delta \overline{w}(x) = \Delta w(x)$ for $|x| \ge 2^{2n_0+1} R_1$ such that
\[
|\overline{w}(x)| \le C |x|^{1 - \nu}, \quad |x| > 2^{2n_0+1} R_1.
\]
Since $w-\overline{w}$ is harmonic and $|w(x) - \overline{w}(x)| \le C |x|^{2 - \varepsilon_1}$ for $|x| > 2^{2n_0+1} R_1$, Lemma~\ref{hm2} then implies the existence of $b \in \mathbb{R}^2$ such that
\[
w(x) - \overline{w}(x) = b^\mathsf{T} x + O(\ln |x|), \quad |x| \to +\infty.
\]
if $n=2$. When $n\geq 3$, there exists $b \in \mathbb{R}^n$ such that
\[
w(x) - \overline{w}(x) = b^\mathsf{T} x + O(1), \quad |x| \to +\infty.
\]

Consequently,
\[
|w(x) - b^\mathsf{T} x| \le C |x|^{1 - \nu}, \quad |x| > 2^{2n_0+1} R_1.
\]

\textbf{Step 3. Determining the logarithmic term and constant term.}

Set
\[
w_1(x) = w(x) - b^\mathsf{T} x.
\]
By Step~2 and Lemma~\ref{ab1}, we have $|D^k w_1(x)| \le C |x|^{1-\nu - k}$ for $k = 0,1,2$.

The equation for $w_1$ can then be written as
\begin{equation}\label{w1}
	\Delta w_1 = f - (\tilde{a}_{ij} - \delta_{ij}) (w_1)_{ij} = O(|x|^{-\beta}) + O(|x|^{-2-2\nu}).
\end{equation}

When $n=2$, we define
\[
H_1^2(x) := \frac{1}{2\pi} \int_{\mathbb{R}^2 \setminus B_{2^{2n_0+1}R_1}} \bigl(\ln|x-y| - \ln|x|\bigr) \Delta w_1(y) \, dy.
\]
Then $w_1 - H_1^2$ is harmonic on $\mathbb{R}^2 \setminus B_{2^{2n_0+1}R_1}$. Moreover, by \cite[Lemma~2.2]{QB20252n}, we have
\begin{equation}\label{h1}
	|H_1^2(x)| \le C \bigl( |x|^{-\min\{1,\beta-2\} + \nu_1} + |x|^{-2\nu + \nu_1} \bigr)
\end{equation}
for any arbitrarily small $\nu_1 > 0$, and consequently
\[
|w_1(x) - H_1^2(x)| = O(|x|^{1-\nu}).
\]
When $n\geq 3$, we construct $L_1$ and $H_1^n$ in the same way as $L$ and $H$ in Step~2, with $w$ replaced by $w_1$. Then, 
\[|L_1(x)|\leq C|x|^{-2s-\beta}+C|x|^{-2s-2-2\nu},\quad |x|>2^{2n_0+1}R_1.\]
Since $n\geq 3$, \cite[Corollary~2.9]{QB2025} implies $H_1^n$ admits the decay estimate
\begin{equation}\label{hn}
	|H_1^n(x)|\leq C|x|^{-\min \{\beta, n-2s\}}+C|x|^{-\min\{2+2\nu,n-2s\}}\leq C|x|^{-2-\nu_1},\quad |x|>2^{2n_0+2}R_1
\end{equation}
and consequently, 
\[
|w_1(x) - H_1^n(x)| = O(|x|^{1-\nu}).
\]

Now, for $n=2$, it follows from Lemma~\ref{hm2} that there exist $c, d \in \mathbb{R}$ such that
\begin{equation*}
	w_1(x) - H_1^2(x) = d \ln|x| + c + O(|x|^{-1}).
\end{equation*}
Combining this with \eqref{h1}, we obtain
\[
w_1(x) = d \ln|x| + c + O(|x|^{-\nu_2}), \quad |x| \to +\infty,
\]
for some \(\nu_2>0\). Decreasing \(\nu_2\), if necessary, we may assume $\nu_2 < \beta - 2$.

When $n\geq 3$, there exists $c \in \mathbb{R}$ such that
\begin{equation*}
	w_1(x) - H_1^n(x) = c + O(|x|^{2-n}).
\end{equation*}
Combining this with \eqref{hn}, we obtain
\[
w_1(x) = c + O(|x|^{-\nu_3}), \quad |x| \to +\infty,
\]
for some \(\nu_3>0\). Decreasing \(\nu_3\), if necessary, we may assume $\nu_3 < 1$.

\textbf{Step 4. Determining the remainder term.}

First consider the $n=2$ case. Define
\[
w_2^2(x) = w_1(x) - d \ln|x| - c.
\]

From Step~3 we already have
\[
|w_2^2(x)| \le C |x|^{-\nu_2}, \quad |x| > 2^{2n_0+2}R_1.
\]
Using Lemma~\ref{ab1} together with Schauder estimates, we obtain
\[
|D^k w_2^2(x)| \le C |x|^{-\nu_2 - k}, \quad |x| > 2^{2n_0+3}R_1, \quad k = 0,1,2.
\]
Thus the equation for $w_2^2$ can be expressed as
\[
\Delta w_2^2(x) = f(x) - (\hat{a}_{ij} - \delta_{ij})(w_2^2)_{ij} = O(|x|^{-\beta}) + O(|x|^{-4-\nu_2}),
\]
where $\hat{a}_{ij}(x) = \int_0^1 F_{M_{ij}}\bigl(t(D^2 w_2^2(x) + D^2(d \ln|x|)) + A\bigr) \, dt$.

Applying \cite[Lemma~1]{LB2023}, there exists $H_2^2$ such that $\Delta H_2^2 = \Delta w_2^2$ for $|x| > 2^{2n_0+3}R_1$ and
\[
H_2^2(x) = 
\begin{cases}
	O(|x|^{-\min\{\beta-2,\,2+\nu_2\}}), & \beta \neq 3,4, \\[4pt]
	O(|x|^{2-\beta} \ln|x|), & \beta = 3,4,
\end{cases}
\quad |x| \to +\infty.
\]
Since $w_2^2 - H_2^2$ is harmonic on $\mathbb{R}^2 \setminus B_{2^{2n_0+3}R_1}$ and $w_2^2 - H_2^2 = O(|x|^{-\nu_4})$ for some $\nu_4 > 0$, Lemma~\ref{hm2} yields
\[
w_2^2(x) - H_2^2(x) = O(|x|^{-1}).
\]

Consequently, we obtain
\[
w_2^2(x) = 
\begin{cases}
	O(|x|^{-\min\{\beta-2,\,1\}}), & \beta \neq 3, \\[4pt]
	O(|x|^{-1} \ln|x|), & \beta = 3,
\end{cases}
\quad |x| \to +\infty.
\]
We may then argue as in Lemma~\ref{ab1} to obtain estimates on $D^k w_2^2$ for $k = 1,2$. Moreover, for any $\alpha \in (0,1)$,
\[
\limsup_{|x| \to +\infty} |x|^{\min\{\beta-2,\,1\} + \alpha} (\ln|x|)^{-\mu_1} [D^2 w_2^2]_{C^{0,\alpha}(\overline{B_{|x|/2}(x)})} < +\infty,
\]
where $\mu_1$ is defined in Theorem~\ref{D2ueo}.

When $n\geq 3$, we define
\[w_2^n(x) = w_1(x) - c.\]
Similarly, by Step~3, the equation for $w_2^n$ can be expressed as
\[
\Delta w_2^n(x) = f(x) - (\overline{a}_{ij} - \delta_{ij})(w_2^n)_{ij} = O(|x|^{-\beta}) + O(|x|^{-4-\nu_3}),
\]
where $\overline{a}_{ij}(x) = \int_0^1 F_{M_{ij}}\bigl(t(D^2 w_2^n(x)) + A\bigr) \, dt$.

Applying \cite[Lemma~1]{LB2023}, there exists $H_2^n$ such that $\Delta H_2^n = \Delta w_2^n$ for $|x| > 2^{2n_0+3}R_1$ and
\[
H_2^n(x) = 
\begin{cases}
	O(|x|^{-\min\{\beta-2,\,2+\nu_3\}}), & \beta \neq n, \\
	O(|x|^{2-n} \ln|x|)+O(|x|^{-2-2\nu_3}), & \beta = n,
\end{cases}
\quad |x| \to +\infty.
\]
Since $w_2^n - H_2^n$ is harmonic on $\mathbb{R}^n \setminus B_{2^{2n_0+3}R_1}$ and $w_2^n - H_2^n = O(|x|^{-\nu_5})$ for some $\nu_5 > 0$, Lemma~\ref{hm2} yields
\[
w_2^n(x) - H_2^n(x) = O(|x|^{2-n}).
\]

Consequently, we obtain
\[
w_2^n(x) = 
\begin{cases}
	O(|x|^{2-\min\{\beta,\,n\}})+O(|x|^{-2-2\nu_3}), & \beta \neq n, \\
		O(|x|^{2-n} \ln|x|)+O(|x|^{-2-2\nu_3}), & \beta = n,
\end{cases}
\quad |x| \to +\infty.
\]
If $|x|^{-2-2\nu_3}>|x|^{2-n}+|x|^{2-\beta}$ (or $|x|^{-2-2\nu_3}>|x|^{2-n} \ln|x|$), we can repeat Step 4 finitely many times to remove the $|x|^{-2-2\nu_3}$ from the above estimate.

We may then argue as in Lemma~\ref{ab1} to obtain estimates on $D^k w_2^n$ for $k = 1,2$. Moreover, for any $\alpha \in (0,1)$,
\[
\limsup_{|x| \to +\infty} |x|^{\min\{\beta,\,n\}-2 + \alpha} (\ln|x|)^{-\mu_2} [D^2 w_2^n]_{C^{0,\alpha}(\overline{B_{|x|/2}(x)})} < +\infty,
\]
where $\mu_2$ is defined in Theorem~\ref{D2ueo}.

The calculation of the constant $d$ in Theorem~\ref{D2ue} is consistent with that in \cite{BJ2026}.


This completes the proof of the fast convergence parts of Theorems~\ref{D2ue} and~\ref{D2uehd} under the assumption $F_{M_{ij}}(A) = \delta_{ij}$. If $F_{M_{ij}}(A) \neq \delta_{ij}$, let $P = \bigl( [\sqrt{F_{M_{ij}}(A)}]_{2\times 2} \bigr)^{-1} = \sqrt{I + A^2}$ and set $\hat{x} = P x$. The argument above yields the desired conclusion in the $\hat{x}$-coordinates; transforming back to $x$ gives the statement of Theorem~\ref{D2ue}. \qed

\section{\texorpdfstring{The slow convergence case $0<\beta\le 2$}{The slow convergence case 0 < beta <= 2}}\label{sec4}
In this section, we treat the slow convergence regime $0<\beta\le2$.

Note that Theorems~\ref{prop:quantitative_hessian} and~\ref{wdecay}, and Lemma~\ref{ab1} remain valid under the slow convergence assumption on $f$. Moreover, there exists a positive constant $m_0$ such that $2^{m_0}\varepsilon < \min\{\beta,1\} < 2^{m_0+1}\varepsilon$. Lemma~\ref{ab2} also holds, and for any $\alpha \in (0,1)$,
\begin{equation}\label{dkw}
		|D^k w(x)| \le C |x|^{2 - \varepsilon_1 - k}, \quad
		\frac{|D^2 w(x_1) - D^2 w(x_2)|}{|x_1 - x_2|^\alpha} \le C |x_1|^{-\varepsilon_1 - \alpha},
\end{equation}
for all $|x| > 2^{2m_0} R_1$, $k = 0,1,2$, and $|x_1| > 2^{2m_0} R_1$, $x_2 \in B_{|x_1|/2}(x_1)$, where $\varepsilon_1 = 2^{m_0} \varepsilon$.

Since $\min\{\beta,1\} < 2\varepsilon_1 < 2\min\{\beta,1\}$, for fixed $\varepsilon_1$ and $\beta \le 2$, we may choose $s > 0$ sufficiently small such that
\begin{equation}\label{2sbe}
	2s + \min\{\beta, 2\varepsilon_1\} < 2.
\end{equation}

Let $L$ and $H$ be as in Step~2 of Section~\ref{sec3}. Following the proof in Step~2 and using $2\varepsilon_1 < 2$, we find that in two dimensions, 
\[
|L(x)| \le C |x|^{-2s - \min\{\beta, 2\varepsilon_1\}}, \quad |x| > 2^{2m_0+1} R_1.
\]

Therefore, \cite[Corollary~2.9]{QB2025} yields
\[
|H(x)| \le C |x|^{-\min\{\beta, 2\varepsilon_1\}}, \quad |x| > 2^{2m_0+2} R_1,
\]
and consequently, for $n=2$,
\begin{equation*}
	|\Delta w(x)| \le 
	\begin{cases} 
		C |x|^{-\beta}, & 0 < \beta \le 1, \\
		C |x|^{-\min\{\beta, 2\varepsilon_1, 2-2s\}}, & 1 < \beta \le 2,
	\end{cases}
	\quad |x| > 2^{2m_0+2} R_1.
\end{equation*}
When \(0<\beta\le1\), \cite[Lemma~1]{LB2023ANS} implies that there exists $\overline{w}$ satisfying $\Delta \overline{w}(x) = \Delta w(x)$ for $|x| \ge 2^{2m_0+1} R_1$ such that
\[
|D^k \overline{w}(x)| \le 
\begin{cases} 
	C |x|^{2 - k - \beta}, & 0 < \beta < 1, \\[4pt]
	C |x|^{1 - k} \ln|x|, & \beta = 1,
\end{cases}
\quad |x| > 2^{2m_0+2} R_1,
\]
for $k = 0,1,2$.

Since $w-\overline{w}$ is harmonic and $|w(x) - \overline{w}(x)| \le C |x|^{2 - \varepsilon_1}$ for $|x| > 2^{2m_0+1} R_1$, Lemma~\ref{hm2} then implies the existence of $b \in \mathbb{R}^2$ such that
\[
w(x) - \overline{w}(x) = b^\mathsf{T} x + O(\ln |x|), \quad |x| \to +\infty.
\]
Combining this with Lemma~\ref{ab1}, we obtain the decay estimate
\[
|D^k w(x)| \le 
\begin{cases} 
	C |x|^{2 - k - \beta}, & 0 < \beta < 1, \\[4pt]
	C |x|^{1 - k} \ln|x|, & \beta = 1,
\end{cases}
\quad |x| > 2^{2m_0+2} R_1,
\]
for $k = 0,1,2$. Moreover, by Lemma~\ref{ab1}, for any $\alpha \in (0,1)$,
\[
\limsup_{|x| \to +\infty} |x|^{\beta + \alpha} (\ln|x|)^{-[\beta]} [D^2 w]_{C^{0,\alpha}(\overline{B_{|x|/2}(x)})} < +\infty.
\]

When $1 < \beta \le 2$, since $1 < 2\varepsilon_1 < 2$, the argument in Step~2 of Section~\ref{sec3} gives a vector $b \in \mathbb{R}^2$ such that
\[
w(x) = b^\mathsf{T} x + o(|x|) \quad \text{as } |x| \to +\infty.
\]
Define $w_1$, $L_1$, and $H_1$ as in the proof of Theorem~\ref{D2ue}. Then
\[
(-\Delta)^s (\Delta w_1) = L_1.
\]

From \eqref{dkw} and \eqref{2sbe}, we obtain the estimates
\[
|L_1(x)| \le C |x|^{-2s - \min\{2\varepsilon_1, \beta\}}, \quad |x| > 2^{2m_0+1} R_1,
\]
and
\[
|H_1(x)| \le C |x|^{-\min\{2\varepsilon_1, \beta\}}, \quad |x| > 2^{2m_0+2} R_1.
\]

Together with
\[
|\Delta w_1(x) - H_1(x)| \le C |x|^{2s-2}, \quad |x| > 2^{2m_0+2} R_1,
\]
we deduce that $|\Delta w_1(x)| \le C |x|^{-\min\{2\varepsilon_1, \beta\}}$ for $|x| > 2^{2m_0+2} R_1$. Since $\min\{2\varepsilon_1, \beta\} \in (1,2)$, by \cite[Lemma~1]{LB2023ANS} and Lemma~\ref{ab1}, we have $|D^k w_1(x)| \le C |x|^{2 - k - \min\{2\varepsilon_1, \beta\}}$ for $k = 0,1,2$. As in \eqref{w1}, we compute
\[
\Delta w_1 = f - (\tilde{a}_{ij} - \delta_{ij}) (w_1)_{ij} = O(|x|^{-\beta}) + O(|x|^{ - 2\min\{2\varepsilon_1, \beta\}}) = O(|x|^{-\beta}).
\]

Applying \cite[Lemma~1]{LB2023ANS} again, we find a function $\phi$ such that
\[
\Delta \phi(x) = \Delta w_1(x) \quad \text{for } |x| > 2^{2m_0+2} R_1,
\]
and
\[
|\phi(x)| \le 
\begin{cases} 
	C |x|^{2-\beta}, & 1 < \beta < 2, \\[4pt]
	C (\ln|x|)^2, & \beta = 2.
\end{cases}
\]

Since $w_1 - \phi$ is harmonic for $|x| > 2^{2m_0+2} R_1$ and $|w_1(x) - \phi(x)| \le C |x|^{\nu_5}$ for some $0 < \nu_5 < 1$, Lemma~\ref{hm2} yields
\[
w_1(x) - \phi(x) = O(\ln|x|) \quad \text{as } |x| \to +\infty.
\]

Combining these estimates with Lemma~\ref{ab1}, we obtain
\[
|D^k w_1(x)| \le 
\begin{cases} 
	C |x|^{2-\beta - k}, & 1 < \beta < 2, \\[4pt]
	C |x|^{-k} (\ln|x|)^2, & \beta = 2,
\end{cases}
\quad k = 0,1,2.
\]
Moreover, by Lemma~\ref{ab1}, for any $\alpha \in (0,1)$,
\[
\limsup_{|x| \to +\infty} |x|^{\beta + \alpha} (\ln|x|)^{-2([\beta]-1)} [D^2 w_1]_{C^{0,\alpha}(\overline{B_{|x|/2}(x)})} < +\infty. 
\]

For $n\geq 3$, since $n-2s>2$,
\begin{equation*}
	|\Delta w(x)| \le 
	\begin{cases} 
		C |x|^{-\beta}, & 0 < \beta \le 1, \\
		C |x|^{-\min\{\beta, 2\varepsilon_1\}}, & 1 < \beta \le 2,
	\end{cases}
	\quad |x| > 2^{2m_0+2} R_1.
\end{equation*}
When $0<\beta\leq 1$, \cite[Lemma~1]{LB2023ANS} implies that there exists $\overline{w}$ satisfying $\Delta \overline{w}(x) = \Delta w(x)$ for $|x| \ge 2^{2m_0+1} R_1$ such that
\[
|D^k \overline{w}(x)| \le 
\begin{cases} 
	C |x|^{2 - k - \beta}, & 0 < \beta < 1, \\[4pt]
	C |x|^{1 - k} \ln|x|, & \beta = 1,
\end{cases}
\quad |x| > 2^{2m_0+2} R_1,
\]
for $k = 0,1,2$.

Since $w-\overline{w}$ is harmonic and $|w(x) - \overline{w}(x)| \le C |x|^{2 - \varepsilon_1}$ for $|x| > 2^{2m_0+1} R_1$, Lemma~\ref{hm2} then implies the existence of $b \in \mathbb{R}^n$ such that
\[
w(x) - \overline{w}(x) = b^\mathsf{T} x + o(|x|), \quad |x| \to +\infty.
\]
Combining this with Lemma~\ref{ab1}, we obtain the decay estimate
\[
|D^k w(x)| \le 
\begin{cases} 
	C |x|^{2 - k - \beta}, & 0 < \beta < 1, \\[4pt]
	C |x|^{1 - k} \ln|x|, & \beta = 1,
\end{cases}
\quad |x| > 2^{2m_0+2} R_1,
\]
for $k = 0,1,2$. Moreover, by Lemma~\ref{ab1}, for any $\alpha \in (0,1)$,
\[
\limsup_{|x| \to +\infty} |x|^{\beta + \alpha} (\ln|x|)^{-[\beta]} [D^2 w]_{C^{0,\alpha}(\overline{B_{|x|/2}(x)})} < +\infty.
\]

When $1 < \beta \le 2$, since $1 < 2\varepsilon_1 < 2$, the argument in Step~2 of Section~\ref{sec3} gives a vector $b \in \mathbb{R}^n$ such that
\[
w(x) = b^\mathsf{T} x + o(|x|) \quad \text{as } |x| \to +\infty.
\]
Define $w_1$, $L_1$, and $H_1$ as in the proof of Theorem~\ref{D2ue}. Then
\[
(-\Delta)^s (\Delta w_1) = L_1.
\]

From \eqref{dkw} and \eqref{2sbe}, we obtain the estimates
\[
|L_1(x)| \le C |x|^{-2s - \min\{2\varepsilon_1, \beta\}}, \quad |x| > 2^{2m_0+1} R_1,
\]
and
\[
|H_1(x)| \le C |x|^{-\min\{2\varepsilon_1, \beta\}}, \quad |x| > 2^{2m_0+2} R_1.
\]

Together with
\[
|\Delta w_1(x) - H_1(x)| \le C |x|^{2s-n}, \quad |x| > 2^{2m_0+2} R_1,
\]
we deduce that $|\Delta w_1(x)| \le C |x|^{-\min\{2\varepsilon_1, \beta\}}$ for $|x| > 2^{2m_0+2} R_1$. Since $\min\{2\varepsilon_1, \beta\} \in (1,2)$, by \cite[Lemma~1]{LB2023ANS} and Lemma~\ref{ab1}, we have $|D^k w_1(x)| \le C |x|^{2 - k - \min\{2\varepsilon_1, \beta\}}$ for $k = 0,1,2$. As in \eqref{w1}, we compute
\[
\Delta w_1 = f - (\tilde{a}_{ij} - \delta_{ij}) (w_1)_{ij} = O(|x|^{-\beta}) + O(|x|^{ - 2\min\{2\varepsilon_1, \beta\}}) = O(|x|^{-\beta}).
\]

Applying \cite[Lemma~1]{LB2023ANS} again, we find a function $\phi$ such that
\[
\Delta \phi(x) = \Delta w_1(x) \quad \text{for } |x| > 2^{2m_0+2} R_1,
\]
and
\[
|\phi(x)| \le 
\begin{cases} 
	C |x|^{2-\beta}, & 1 < \beta < 2, \\[4pt]
	C \ln|x|, & \beta = 2.
\end{cases}
\]

Since $w_1 - \phi$ is harmonic for $|x| > 2^{2m_0+2} R_1$ and $|w_1(x) - \phi(x)| \le C |x|^{\nu_6}$ for some $0 < \nu_6 < 1$, Lemma~\ref{hm2} yields
\[
w_1(x) - \phi(x) = O(\ln|x|) \quad \text{as } |x| \to +\infty.
\]

Combining these estimates with Lemma~\ref{ab1}, we obtain
\[
|D^k w_1(x)| \le 
\begin{cases} 
	C |x|^{2-\beta - k}, & 1 < \beta < 2, \\[4pt]
	C |x|^{-k} \ln|x|, & \beta = 2,
\end{cases}
\quad k = 0,1,2.
\]
Moreover, by Lemma~\ref{ab1}, for any $\alpha \in (0,1)$,
\[
\limsup_{|x| \to +\infty} |x|^{\beta + \alpha} (\ln|x|)^{-[\beta]+1} [D^2 w_1]_{C^{0,\alpha}(\overline{B_{|x|/2}(x)})} < +\infty. 
\]
This completes the proof of the slow convergence parts of Theorems~\ref{D2ue} and~\ref{D2uehd}.
\qed

\end{document}